\renewcommand\section{\@startsection{section}{1}%
  \z@{.7\linespacing\@plus\linespacing}{.5\linespacing}%
 {\normalfont\bfseries\centering}}
\tikzset{%
    symbol/.style={%
        draw=none,
        every to/.append style={%
            edge node={node [sloped, allow upside down, auto=false]{$#1$}}}
    }
}
\tikzset{->-/.style={decoration={
  markings,
  mark=at position .5 with {\arrow{>}}},postaction={decorate}}}
\tikzset{mid/.style 2 args={
        decoration={markings,
            mark= at position #2 with {\arrow{{#1}[scale=1.5]}} ,
        },
        postaction={decorate}
    },
mid/.default={>}{0.5}
}
\definecolor{jade}{RGB}{0,168,107}
\def\centerarc[#1](#2)(#3:#4:#5){\draw[#1] ($(#2) + ({#5*cos(#3)},{#5*sin(#3)})$) arc (#3:#4:#5);}
\def\SurfScale{0.5}
\def\StringScale{0.95}
\newcommand{\eq}[1]{\begin{equation}
                     \begin{aligned} #1 \end{aligned}
                     \end{equation}}
\newcommand{\ov}[1]{\overline{#1}}
\DeclareMathAlphabet{\mathpzc}{OT1}{pzc}{m}{it}
\newcommand{\Kbb}{\mathbb{K}}
\newcommand{\Zbb}{\mathbb{Z}}
\newcommand{\onebb}{\mathbbm{1}}
\newcommand{\Autrm}{\mathrm{Aut}}
\newcommand{\Endrm}{\mathrm{End}}
\newcommand{\VGraphrm}{\mathrm{VGraph}}
\newcommand{\Idrm}{\mathrm{Id}}
\newcommand{\Imrm}{\mathrm{Im}}
\newcommand{\KSNrm}{\mathrm{KSN}}
\newcommand{\NGraphrm}{\mathrm{NGraph}}
\newcommand{\SNrm}{\mathrm{SN}}
\newcommand{\coevrm}{\mathrm{coev}}
\newcommand{\evrm}{\mathrm{ev}}
\newcommand{\orrm}{\mathrm{or}}
\newcommand{\trrm}{\mathrm{tr}}
\newcommand{\Ccal}{\mathcal{C}}
\newcommand{\Dcal}{\mathcal{D}}
\newcommand{\Ecal}{\mathcal{E}}
\newcommand{\Gcal}{\mathcal{G}}
\newcommand{\Pcal}{\mathcal{P}}
\newcommand{\Scal}{\mathcal{S}}
\newcommand{\Tcal}{\mathcal{T}}
\newcommand{\cbf}{\mathbf{c}}
\newcommand{\dbf}{\mathbf{d}}
\newcommand{\ebf}{\mathbf{e}}
\newcommand{\fbf}{\mathbf{f}}
\newcommand{\mbf}{\mathbf{m}}
\newcommand{\nbf}{\mathbf{n}}
\newcommand{\Bbf}{\mathbf{B}}
\newcommand{\Zbf}{\mathbf{Z}}
\newcommand{\Tsf}{\mathsf{T}}
\newcommand{\Zsf}{\mathsf{Z}}
\newcommand{\BiModpzc}{\mathpzc{BiMod}}
\newcommand{\Cylpzc}{\mathpzc{Cyl}}
\newcommand{\GBordpzc}{G\mathpzc{Bord}}
\newcommand{\GCobpzc}{G\mathpzc{Cob}}
\newcommand{\PBunpzc}{\mathpzc{PBun}}
\newcommand{\Vectpzc}{\mathpzc{Vect}}
\newcommand{\la}{\left\langle}
\newcommand{\ra}{\right\rangle}
\newcommand{\lbr}{\left\lbrace}
\newcommand{\rbr}{\right\rbrace}
\newcommand{\p}{\partial}
\newcommand{\lb}{\left(}
\newcommand{\rb}{\right)}
\renewcommand{\hom}{\mathrm{Hom}}
\numberwithin{equation}{section}
\theoremstyle{definition}
\newtheorem{defn}{Definition}[section]
\theoremstyle{plain}
\newtheorem{theo}[defn]{Theorem}
\theoremstyle{plain}
\newtheorem{lem}[defn]{Lemma}
\theoremstyle{remark}
\newtheorem{rem}[defn]{Remark}
\theoremstyle{remark}
\theoremstyle{plain}
\theoremstyle{plain}
\newtheorem{prop}[defn]{Proposition}
\theoremstyle{plain}
\theoremstyle{plain}
\theoremstyle{definition}
\begin{document}

\begin{flushright}
  {\tiny
  ZMP-HH/22-17\\
  }
  \end{flushright}

  \title{A $G$-equivariant String-Net Construction}

\vspace{2cm}
\date{\today}

\vspace{0.4cm}

\author{Adrien DeLazzer Meunier, Christoph Schweigert, Matthias Traube}

\address{Adrien DeLazzer Meunier: Dalhousie University, 6299 South St, Halifax, NS, B3H 4R2 } 
\email{ adelazzer@dal.ca}
\address{Christoph Schweigert: Universit\"at Hamburg \\ 
   Bundesstra\ss e 55, 20146 Hamburg } 
\email{christoph.schweigert@uni-hamburg.de}
\address{Matthias Traube: Universit\"at Hamburg \\ 
   Bundesstra\ss e 55, 20146 Hamburg } 
\email{matthias.traube@uni-hamburg.de}
\begin{abstract}
We develop a string-net construction for the (2,1)-dimensional part of
a $G$-equivariant three-dimensional topological field theory
based on a $G$-graded spherical fusion category. In this construction,
a $G$-equivariant generalization of the Ptolemy groupoid enters.
We compute the associated cylinder categories and show that, as expected, the model
is closely related to the $G$-equivariant Turaev-Viro theory.
\end{abstract}

\maketitle

\tableofcontents

\vspace*{0.5cm}
We dedicate this article to the memory of Krzysztof Gaw\polhk{e}dzki, in admiration for
the mathematical and physical depth and breadth of his work. He pioneered
higher structures in quantum field theories and
used, in particular, equivariant structures and orbifold constructions to get beautiful insights  \cite{felder1988spectra}\cite{gawkedzki2004basic}\cite{gawkedzki2011bundle}.

\section{Introduction}

String-nets originated in physics from a description of topological phases of matter \cite{Levin:2004mi}. A mathematical formulation for string-nets was later given in \cite{kirillov2011string}. The idea is to consider a vector space generated by graphs embedded into a surface and labeled with data from a spherical fusion category. Relations are given by the graphical calculus in the category, which is considered locally on the surface. This can be understood as an example of a topological field theory in terms of generators and relations in the sense of \cite{Walker}.

Using the description of the bicategory of $(3,2,1)$-cobordisms in terms of generators and relations \cite{bartlett2015modular}, in \cite{Goosen}\cite{bartlett2022three} it was shown that the string-net construction of \cite{kirillov2011string} can be extended to a once extended three dimensional TQFT and that the string-net TQFT
is equivalent to the once extended Turaev-Viro TQFT of \cite{KirillovBalsam} \cite{balsam} \cite{balsam2010turaevviro}. 

A natural question is, whether this equivalence can be extended to a $G$-equivariant setting, for $G$ any finite group. There are several different points that need to be addressed when trying to answer this question. In \cite{heinrich2016symmetry}, a version of the Levin-Wen model on surfaces with $G$-bundles was defined. However, a rigorous mathematical formulation for $G$-equivariant strings-nets, in the spirit of \cite{kirillov2011string}, was not given. If there was a suitable $G$-equivariant string-net construction on surfaces, possibly with boundary, one should then compare it to a $G$-equivariant version of the Turaev-Viro construction. TQFTs on manifolds with $G$-bundles were defined in \cite{TuraevHQFTbook}, where they are called homotopy quantum field theories (HTQFTs). A construction of a (once extended) Turaev-Viro HTQFT with input a $G$-graded spherical fusion category is given in \cite{turaev20123}\cite{turaev20203}. Thus there is a natural candidate to compare an equivariant string-net construction to. 

In this paper we give a mathematical definition for $G$-equivariant string-nets on compact surfaces, which are allowed to have non-empty boundary. As an algebraic input we need a $G$-graded spherical fusion category $\Ccal$. Furthermore, we show that our construction indeed reproduces the $(2,1)$-part of a once extended HTQFT as defined in \cite{schweigert2020extended}. We show that its value on objects of a suitable $G$-bordism bicategory is equivalent to the $G$-center of $\Ccal$. In addition, we are able to compute string-net spaces on surfaces in terms of purely algebraic data, i.e. we will show in section \ref{higher genus} that the string-net space on a surface is isomorphic to a certain hom-space in the $G$-center. Comparing our string-net construction to the Turaev-Viro theory of \cite{turaev20203} is subtle, as we formulate, for reasons explained in \cite[Remark 2.4]{schweigert2020extended},
 our construction in the language of bicategories and use related, but slightly different, geometric inputs. Our assumptions allow us to compute cylinder categories rather than to postulate them, cf. Remark \ref{remIntro}.

An extension to three dimensional bordisms, though, is beyond the scope of this paper. Since the results of \cite{bartlett2015modular} are not available for $G$-equivariant HTQFTs such an extension would require different methods from the ones on \cite{Goosen}.

From the point of view of applications, our construction might be interesting for the construction of correlators in orbifold rational conformal theory (RCFT). Constructions of correlators in (orbifold) RCFTs in terms of three dimensional TQFT were given in a series of papers \cite{Fjelstad:2005ua, Fuchs:2004xi, Fuchs:2004dz, Fuchs:2003id, Fuchs:2002cm}. Due to the close connection of string-nets with three dimensional TQFTs, in \cite{Schweigert:2019zwt} a string-net constructions for closed RCFT correlators based on Cardy-bulk algebra field content was given. The construction was extended to arbitrary open-closed RCFTs with fixed open-closed field content in \cite{traube2022cardy} and to arbitrary open-closed RCFTs with defects in \cite{fuchs2021string}. Using the string-net construction given in this paper, it seems reasonable to obtain a construction of orbifold correlators very similar to the previous ones. 

This paper is organized as follows. In section 2 we recall some facts about spherical fusion categories as well as about $G$-graded categories. In particular we give in Proposition \ref{new crossing} an explicit expression for the $G$-crossing of the $G$-center of a $G$-graded spherical fusion category. In section 3 we recall once extended $G$-equivariant HTQFTs and explain our definition for an equivariant bordism bicategory. Sections 4, 5, and 6 are the main part of the paper. In section 4 a $G$-labeled version of the Ptolemy-groupoid is introduced. This is the central technical tool for our string-net construction. The main result in this section is Theorem \ref{G scc}, where we show that the $G$-enhanced Ptolemy-complex inherits from the ordinary Ptolemy complex the property of being connected and simply connected. Using the $G$-Ptolemy groupoid, in section 5 we finally lay out our construction of the $G$-equivariant string-net space and show that it indeed is the $(2,1)$-part of a once extended HTQFT. In section 6, we compute string-net spaces for a cylinder, pair of pants and a genus two surface with three boundary components. By doing so, we will see how the HTQFT structure of our string-net construction induces the $G$-crossing as well as the monoidal product in the $G$-center. A higher genus computation will then connect equivariant string-nets to the equivariant Turaev-Viro HTQFT.
 
\subsection{Miscellaneous Notation}

We fix some notation, which will be used throughout the whole paper. First, $\Kbb$ will be an algebraically closed field of characteristic zero, $G$ a finite group and $BG$ its classifying space, which is an Eilenberg-MacLance space of type $K(G,1)$. 

For a small category $\Ccal$, the set of objects is denoted $\Ccal_0$ and the set of morphisms $\Ccal_1$.

Given a graph $\Gamma$, its sets of vertices and edges will be $V(\Gamma)$ and $E(\Gamma)$. The set of half-edges incident to a vertex $v$ will be denoted $H(v)$. Given a surface $\Sigma$ and an embedded graph $\Gamma\hookrightarrow \Sigma$, the connected components of $\Gamma^{[2]}\coloneqq \Sigma\backslash \Gamma$ will be called \textit{$2$-faces of $\Gamma$}. If a graph $\Gamma$ is oriented, its set of oriented edges will be $E^{or}(\Gamma)$. In addition, an edge $e$ with an orientation will be written in bold symbols, i.e $\ebf\coloneqq (e,or)$. The same edge with the opposite orientation will get an additional overline $\ov{\ebf}\coloneqq (e,-or)$. A graph is called finite, if it has finitely many
vertices and edges.

\vspace*{0.5cm}
\textbf{Acknowledgement:} The authors thank Yang Yang and Theo Johnson-Freyd for useful discussions. CS and MT are supported by the
Deutsche Forschungsgemeinschaft (DFG, German Research Foundation) under SCHW1162/6-
1; CS is also supported by DFG under Germany’s Excellence Strategy - EXC 2121 "Quantum
Universe" - 390833306.

\section{Categorial Preliminaries}

\subsection{Spherical Fusion Categories}

We will work exclusively with spherical fusion categories. For the reader's convenience, we recall some definitions and facts. Proofs for the statements can be found in many sources, and an exhaustive textbook treatment is given in \cite{tensorCats}. 

Categories $\Ccal$ in this paper are always enriched in the symmetric monoidal category of finite dimensional $\Kbb$-vector spaces and abelian. In this case we speak of \textit{$\Kbb$-linear categories}. Since we fix the ground field from the start, we will just speak of \textit{linear categories}. A linear category is \textit{monoidal}, if there is a bilinear functor $\otimes:\Ccal\times \Ccal\rightarrow \Ccal$ and an unit object $\onebb$ in $\Ccal$ together with the usual associativity and unitality constraints. Without loss of generality we can assume that monoidal categories are strict, meaning that the following objects are identical $\onebb\otimes c=c=c\otimes \onebb$ and $(a\otimes b)\otimes c=a\otimes (b\otimes c)$. Assuming these strictness conditions, associativity and unitality constraints become trivial. 

In addition we take categories to be \textit{finitely semi-simple}. That is, there is a finite set of isomorphism classes of simple objects, for which we choose a set I(C) of representatives, which includes the monoidal unit.
Every object decomposes as a finite direct sum of simple objects and $\Endrm_\Ccal(\onebb)\simeq \Kbb$. A category satisfying these properties is called a \textit{fusion category}.

Furthermore, a monoidal category has \textit{right (resp.\ left) duals}, if for any object $c\in \Ccal$ there exists an object $c^\ast$  (resp.\ $ \prescript{\ast}{}{c}$) and morphisms
\eq{
\evrm_c:c^\ast\otimes c\rightarrow \onebb,\qquad \coevrm_c:\onebb\rightarrow c\otimes c^\ast
}
for right duals and 
\eq{
\widetilde{\evrm}_c:c\otimes \prescript{\ast}{}{c}\rightarrow \onebb,\qquad \widetilde{\coevrm}_c:\onebb\rightarrow \prescript{\ast}{}{c}\otimes c
}
for left duals. The evaluation and coevaluation morphisms have to satisfy the snake identities 
\eq{
(\Idrm_c\otimes \evrm_c)\circ (\coevrm_c\otimes \Idrm_c)=\Idrm_c,\qquad (\widetilde{\evrm}_c\otimes \Idrm_c)\circ (\Idrm_c\otimes \widetilde{\coevrm}_c)=\Idrm_c
}  

The category is called \textit{rigid} if every object has a left and right dual. A \textit{pivotal structure} on a rigid category is a monoidal natural isomorphism $\pi:\Idrm_\bullet\Rightarrow (\bullet)^{\ast\ast}$. Similar to the monoidal structure, we can assume  \cite[Theorem 2.2]{ng2007higher}
pivotal structures to be strict if they exist, meaning $\pi_c=\Idrm_c$. In a category with a (strict) pivotal structure, i.e. a pivotal category, we can form left and right traces of morphisms $f\in \Endrm_\Ccal(c)$
\eq{
\trrm_\ell(f)\coloneqq \evrm_{c^\ast}\circ(f\otimes \Idrm_{c^\ast})\circ \coevrm_c,\qquad \trrm_r(f)\coloneqq \widetilde{\evrm}_{\prescript{\ast}{}{c}}\circ (\Idrm_{\prescript{\ast}{}{c}}\otimes f)\circ \widetilde{\coevrm}_c\, .
}
A pivotal category is called \textit{spherical} if left and right traces coincide. Note that in a spherical category one can identify left and right duals, which we will implicitly do.
As left and right traces agree in spherical categories, we simply speak of \textit{the} trace and drop the distinction between left and right traces from notation. 

In a spherical category, we can associate to $c\in \Ccal_0$ its \textit{dimension} $d_c\in \Endrm_\Ccal(\onebb)\simeq \Kbb$ defined as 
\eq{
d_c\coloneqq \trrm(\Idrm_c)\quad .
}
The whole category has a \textit{global dimension}
\eq{
D\coloneqq \sum_{i\in I(\Ccal)}d_i^2
}
which is non-vanishing \cite[Theorem~7.21.12]{tensorCats}.

\subsection{Graphical Calculus}

The graphical calculus for spherical fusion categories plays a prominent role in the string-net construction. We discuss it quickly to fix some conventions. For $c\in \Ccal_0$ we represent $\Idrm_c$ by a straight line in the plane, oriented from bottom to top and labeled with $c$. Similar $\Idrm_{c^\ast}$ is represented by a $c$-labeled straight line oriented from top to bottom 

\begin{center}
\begin{tikzpicture}
\node at (-1,1) {$\Idrm_c=$};
\node at (0.2,0) {$c$};
\draw[->-] (0,0) -- (0,2);

\node at (2,1) {$\Idrm_{c^\ast}=$};
\node at (3.2,0) {$c$};
\draw[->-] (3,2) -- (3,0);
\end{tikzpicture}
\end{center}

A morphism $c\xrightarrow{f} d$ is drawn as

\begin{figure}[H]
    \centering
    \resizebox{0.7\width}{!}{
    \begin{tikzpicture}
        
        \draw[mid] (0,0) -- ++(0,3) node[midway, xshift=0.5cm] {\Large $c$};
        \draw (-0.5,3) rectangle ++(1,1) node[midway] {\Large $f$};
        \draw[mid](0,4) -- ++(0,3) node[midway, xshift=0.5cm] {\Large $d$};
        
    \end{tikzpicture}}
\end{figure}

and composition of morphism is simply concatenation of string-diagrams. The monoidal product is represented by drawing strands form left to right. Evaluation and coevaluation morphisms are given by oriented caps and cups

\begin{figure}[H]
    \centering
    \resizebox{0.7\width}{!}{
    \begin{tikzpicture}
        
        \draw[mid] (0,0) arc (0:-180:1cm) node[xshift=2.25cm, yshift=-0.5cm] {\Large $c$};
        \draw[mid={<}{0.5}] (4,0) arc (0:-180:1cm) node[xshift=2.25cm, yshift=-0.5cm] {\Large $c$};
        \draw[mid] (0,-4) arc (0:180:1cm) node[xshift=2.25cm, yshift=0.5cm] {\Large $c$};
        \draw[mid={<}{0.5}] (4,-4) arc (0:180:1cm) node[xshift=2.25cm, yshift=0.5cm] {\Large $c$};
        
        \node at ($(0,0) + (-1, -1.5)$) {\Large $\operatorname{coev}_c$};
        \node at ($(4,0) + (-1, -1.5)$) {\Large $\widetilde{\operatorname{coev}}_c$};
        \node at ($(0,-4) + (-1, -0.5)$) {\Large $\operatorname{ev}_c$};
        \node at ($(4,-4) + (-1, -0.5)$) {\Large $\widetilde{\operatorname{ev}}_c$};
    \end{tikzpicture}}
\end{figure}

In the string-net construction for oriented surfaces, it turns out to be convenient to label vertices not directly by morphisms in the category, but rather by elements in a vector space that treats inputs and outputs on the same footing and depends
only on the cyclic order of inputs and outputs. This vector space is constructed as a direct limit. For this limit, we need to identify different hom-spaces; for $\Ccal$ strictly pivotal, such identification maps are constructed using canonical isomorphisms $\tau_{a,b}:\hom_\Ccal(\onebb, a\otimes b )\xrightarrow{\simeq} \hom_\Ccal(\onebb, b\otimes a)$ for all $a,b\in \Ccal_0$

\begin{figure}[H]
    \centering
    \resizebox{0.7\width}{!}{
    \begin{tikzpicture}
        
        \draw (0,0) rectangle ++(2,1);
        \draw[mid] (0.25,1) -- ++(0,2) node[xshift=0.25cm, yshift=-0.25cm] {\Large $a$};
        \draw[mid] (1.75,1) -- ++(0,2) node[xshift=0.25cm, yshift=-0.25cm] {\Large $b$};
        \draw[{|[scale=1.5]}-{>[scale=1.5]}] (3,0.5) -- ++(1.5,0) node[midway, yshift=0.5cm] {\Large $\tau_{a,b}$};
        
        \draw (6,0) rectangle ++(2,1);
        \draw[mid] ($(6,0) + (0.25,1)$) -- ++(0,2) node[xshift=0.25cm, yshift=-0.25cm] {\Large $a$};
        \draw ($(6,0) + (1.75,1)$) -- ++(0,0.25) arc (180:0:0.5cm) -- ++(0,-1.5) .. controls ++(0,-1) and ++(0,-1) .. ++(-3.5,0) -- ++(0,3.25) [mid={>}{0.906}] node[xshift=0.25cm, yshift=-0.25cm] {\Large $b$};
        
    \end{tikzpicture}}
\end{figure}
Pivotality implies $\tau_{a,b}\circ \tau_{b,a}=\Idrm$. Therefore, instead of looking at each morphism set $\hom_\Ccal(\onebb, c_1\otimes\cdots \otimes c_n)$ on its own, we take the limit over the diagram 

\begin{tikzcd}
\ar[r]&[-1.2em] \hom_\Ccal(\onebb,c_{i+1}\otimes \cdots \otimes c_{i-1}\otimes c_i) \ar[r, "\tau_{c_{i+1},c_{i+2}\otimes \cdots \otimes c_{i}}"] &[2em] \hom_\Ccal(\onebb,c_{i+2}\otimes \cdots \otimes c_{i}\otimes c_{i+1})\ar[r]&[-1.2em]\phantom{0}
\end{tikzcd} 

For later use, we denote the limit by $\Ccal(c_1,\cdots, c_n)$. An element $ f\in \Ccal(c_1,\cdots, c_n)$ will be represented by a circular coupon rather than a box, since it only depends on the cyclic order of $c_1,\cdots, c_n$. 

\begin{figure}[H]
    \centering
    \resizebox{0.7\width}{!}{
    \begin{tikzpicture}
        
        \draw[-{>[scale=1.5]}] (0,0) node[left] {\Large $f \in \Ccal(c_1,\ldots,c_n$)} -- ++(2,0);
        \node[circle, draw] at (4.5,0) (a) {\Large $f$};
        \centerarc[dashed](4.5,0)(40:-170:0.75cm);
        \draw[mid] (a) -- +(60:2) node[xshift=0.2cm, yshift=-0.35cm] {\Large $c_n$};
        \draw[mid] (a) -- +(115:2) node[xshift=0.5cm, yshift=-0.25cm] {\Large $c_1$};
        \draw[mid] (a) -- +(170:2) node[xshift=0.25cm, yshift=-0.35cm] {\Large $c_2$};
        
    \end{tikzpicture}}
\end{figure}

There is a partial composition map for elements in the limit, induced by the maps
\eq{
\hom_\Ccal(\onebb, a\otimes b)\otimes \hom_\Ccal(\onebb, b^\ast\otimes c)&\rightarrow \hom_\Ccal(\onebb, a\otimes c)\\
(f,g)&\mapsto \left(\Idrm_a\otimes \widetilde{\mathrm{ev}}_b\otimes \Idrm_c\right)\circ (f\otimes g)
} 
The composition will still be represented by concatenating strands in string-diagrams.

Using semi-simplicity for any $c\in \Ccal_0$ we can pick a basis $\lbr \alpha_{c,i}^k\rbr_k $ in $\Ccal(c, i^\ast)$ and $\lbr \alpha_m^{c,i}\rbr_m $ in $\Ccal(c^\ast, i)$ for any $i\in I(\Ccal)$ with 

\begin{figure}[H]
    \centering
    \resizebox{0.7\width}{!}{
    \begin{tikzpicture}
        
        \node[circle, draw, inner sep=0] at (0,0) (a) {\Large $\alpha_{c,i}^k$};
        \node[circle, draw, inner sep=0] at (0,2) (b) {\Large $\alpha_m^{c,i}$};
        \draw[mid] (0,-2) node[above right] {\Large $i$} -- (a);
        \draw[mid] (a) -- (b) node[anchor=west, midway, xshift=0.1cm] (c) {\Large $c$};
        \draw[mid] (b) -- (0,4) node[below right] {\Large $i$};

        \node at ($(c) + (2,0)$) (d) {\Large $=\ \ \delta_{km}$};
        \draw[mid] ($(d) + (2, -3)$) -- ++(0,6) node[midway, xshift=0.35cm] {\Large $i$};
        \end{tikzpicture}
        }
        \end{figure}
        and thus
        
        \begin{figure}[H]
    \centering
    \resizebox{0.7\width}{!}{
    \begin{tikzpicture}
        \node[anchor=east] at (-1.5,-6) {\LARGE $\displaystyle{\sum_{i \in I(\Ccal)} d_i}$};
        \node[circle, draw] at (-1.0,-7) (a) {\Large {$\alpha$}};
        \node[circle, draw] at (-1.0,-5) (b) {\Large {$\alpha$}};
        \draw[mid] (-1.0,-9) node[anchor=east, above right] {\Large $c$} -- (a);
        \draw[mid] (a) -- (b) node[anchor=west, midway, xshift=0.1cm] (m) {\Large $i$};
        \draw[mid] (b) -- (-1.0,-3) node[anchor=east, below right] {\Large $c$};
        
        \node[anchor=west] at ($(m) + (0.5,0)$) (e) {\LARGE $\displaystyle{\coloneqq\ \sum_{i \in I(\Ccal)} \sum_k d_i}$};
        \node[circle, draw, inner sep=0] at ($(e.east) + (0.5,-1)$) (c) {\Large {$\alpha^{c,i}_k$}};
        \node[circle, draw, inner sep=0] at ($(e.east) + (0.5,1)$) (d) {\Large {$\alpha_{c,i}^k$}};
        \draw[mid] ($(e.east) + (0.5,-3)$) node[above right] {\Large $c$} -- (c);
        \draw[mid] (c) -- (d) node[anchor=west, midway, xshift=0.1cm] (n) {\Large $i$};
        \draw[mid] (d) -- ($(e.east) + (0.5,3)$) node[below right] {\Large $c$};
        
        \node[anchor=west] at ($(e.east) + (2,0)$) (f) {\LARGE $\displaystyle{=\phantom{\sum_i}}$};
        \draw[mid] ($(f.east) + (0.25,-3)$) -- ($(f.east) + (0.25,3)$) node[midway, xshift=0.35cm] {\Large $c$};

    \end{tikzpicture}}
    \caption{Completeness relation in the semi-simple category $\Ccal$.}
    \label{completeness relation}
\end{figure}

where $d_i=\trrm(\Idrm_i)$. Finally we discuss \textit{6j-symbols} in $\Ccal$. In a pivotal finite semi-simple category, by choosing bases in the spaces of three point couplings, the vector space $\Ccal(i, j, k,\ell)$ has two distinct bases. One stems from the decomposition $\Ccal(i, j, k, \ell)\simeq \bigoplus_r\Ccal(i, j, r)\otimes \Ccal(r^\ast, k, \ell)$, whereas the other corresponds to the splitting $\Ccal(i, j, k, \ell)\simeq \bigoplus_{s}\Ccal(j, k, s)\otimes \Ccal(s^\ast,\ell, i)$. The entries of the transformation matrix between the two bases are the 6j-symbols

\begin{figure}[H]
    \makebox[\textwidth][c]{
    \resizebox{0.7\width}{!}{
    \begin{tikzpicture}
        
        \node[circle, draw] at (1,1.5) (a) {\Large $\alpha$};
        \node[circle, draw] at ($(a) + (1,1.5)$) (b) {\Large $\beta$};
        \draw[mid] (a) to[bend left] node[pos=0.5, xshift=-0.5cm] {\Large $r$} (b);
        \draw[mid] (a) to[bend right] (0,0) node[above right] {\Large $i$};
        \draw[mid] (a) to[bend left] (2,0) node[above right] {\Large $j$};
        \draw[mid] (b) to[bend left] (4,0) node[above right] {\Large $k$};
        \draw[mid] (b) -- ++(0,2) node[below right] {\Large $l$};
        
        \node[anchor=west] at (4.5,2) {\LARGE $\displaystyle{=\sum_{s\in I(\Ccal)} \sum_{\gamma, \delta} F^{ijkl} \begin{bmatrix} \alpha & r & \beta \\ \gamma & s &\delta \end{bmatrix}}$};
        
        \node at (13.5, 3) (c) {};
        
        \node[circle, draw] at (c) (d) {\Large $\delta$};
        \node[circle, draw] at ($(d) + (1,-1.5)$) (g) {\Large $\gamma$};
        \draw[mid] (g) to[bend right] node[pos=0.5, xshift=0.5cm] {\Large $s$} (d);
        \draw[mid] (g) to[bend left] ($(g) + (1,-1.5)$) node[above right] {\Large $k$};
        \draw[mid] (g) to[bend right] ($(g) + (-1,-1.5)$) node[above right] {\Large $j$};
        \draw[mid] (d) to[bend right] ($(d) + (-2,-3)$) node[above right] {\Large $i$};
        \draw[mid] (d) -- ++(0,2) node[below right] {\Large $l$};
        
    \end{tikzpicture}}}
\end{figure}

The 6j-symbols satisfy the usual pentagon relation.

\subsection{$G$-categories}

This section is a recollection of the relevant definitions associated to categories which are $G$-graded and possibly carry a $G$-action. The reader can find detailed accounts of $G$-categories in the literature, e.g. \cite[Appendix~5]{TuraevHQFTbook}\cite{turaev2013graded} and references therein. Besides recalling definitions, we give in Proposition \ref{new crossing} an alternative definition for a $G$-crossing on the $G$-center of a $G$-graded category. Though our definition is equivalent to the one in \cite{turaev2013graded}, we still introduce it, since it makes the discussion of string-nets on cylinders more transparent later. 

Throughout the section $\Ccal$ is a $\Kbb$-linear, strict monoidal category. $\ov{G}$ denotes the category having as objects the elements of $G$ and only identity morphisms. Multiplication in $G$ endows $\ov{G}$ with a strict monoidal structure.

\begin{defn}
The monoidal, linear category $\Ccal$ is \textit{$G$-graded}, if it decomposes into a direct sum of pairwise disjoint, full $\Kbb$-linear subcategories $\lbr \Ccal_g\rbr_{g\in G}$, such that
\begin{enumerate}[label=\roman*)]
\item any object $c\in\Ccal$ decomposes as a finite direct sum $c=c_{g_1}\oplus \cdots \oplus c_{g_n}$, with $c_{g_i}\in \Ccal_{g_i}$.
\item for $c\in \Ccal_g$, $d\in \Ccal_h$, it holds $c\otimes d\in \Ccal_{gh}$.
\item $\hom(c,d)=0$ for $c\in \Ccal_g$ and $d\in \Ccal_h$ with $g\neq h$.
\item $\onebb\in \Ccal_e$.
\end{enumerate}
\end{defn}

The subcategories $\Ccal_g$ are called homogeneous components of $\Ccal$. The component $\Ccal_e$ for the neutral element $e\in G$ is called the \textit{neutral component}. There is an obvious forgetful functor $U:\Ccal\rightarrow \dot{\Ccal}$, which simply forgets the $G$-grading. A $G$-graded category $\Ccal$ is rigid/ pivotal/ spherical if its underlying monoidal linear category $\dot{\Ccal}$ is rigid/ pivotal/ spherical. It is fusion, if its underlying category is a fusion category, such that any homogeneous component contains at least one simple object. In a $G$-graded fusion category, any set of representing objects for isomorphism classes of simple objects splits as a disjoint union $I=\bigsqcup_{g\in G} I_g$ with $I_g$ the set of isomorphism classes of simples in $\Ccal_g$. Thus simple objects are always homogeneous. 


In a $G$-graded category the group $G$ serves as an index set, but doesn't act on the category so far. A $G$-action will come in the form of a $G$-crossing. For a monoidal category $\Dcal$, recall that $\Autrm_\otimes(\Dcal)$ is the category having monoidal equivalences $F:\Dcal\xrightarrow{\simeq}\Dcal$ as objects and monoidal natural isomorphisms as morphisms. Composition of functors equips it with a strict monoidal structure whose monoidal unit is the identity functor. 

\begin{defn}
A $G$-crossed category is a $G$-graded category $\Ccal$ together with a strong monoidal functor $\rho:\ov{G}\rightarrow \Autrm_\otimes(\Ccal)$ such that $\rho_h(\Ccal_g)\subset \Ccal_{h^{-1}gh}$. The functor $\rho$ is called a \textit{$G$-crossing}.
\end{defn}

For any $g\in G$, $\rho_g$ is a strong monoidal functor. In addition, as $\rho$ is strong monoidal, a $G$-crossed category comes with natural isomorphisms $\lbr \eta_{h,g}(\bullet):\rho_g\rho_h(\bullet)\xrightarrow{\simeq}\rho_{hg}(\bullet)\rbr $ and $\eta_0(\bullet):\Idrm_\Ccal\xrightarrow{\simeq} \rho_e(\bullet)$. These maps satisfy the usual coherence diagrams, which can be found in \cite[section~3]{turaev2013graded}.


Just as it is sometimes necessary to equip a given monoidal category with the additional structure of a braiding, there is a meaningful notion of a braiding for G-crossed categories. This cannot be an ordinary braiding for the underlying monoidal category since for non-abelian $G$  it holds  in general that $\hom(c\otimes d,d\otimes c)=0$ for $c$, $d$ in different homogeneous components of $\Ccal$. However, $\hom(c\otimes d, d\otimes \rho_g(c))$ for $d\in \Ccal_g$ and $c\in \Ccal_h$ has a chance to be non-zero, as both $c\otimes d$ and $d\otimes \rho_g(c)\in \Ccal_{hg}$. Thus, a $G$-braiding is a natural isomorphism 
\eq{
\lbr \beta_{c,d}:c\otimes d\rightarrow d\otimes \rho_g(c)\rbr 
}
defined for homogeneous elements $d\in \Ccal_g$, $c\in \Ccal_h$ for all $g$, $h\in G$ and linearly extended to all objects of $\Ccal$. The natural isomorphism has to satisfy three coherence diagrams for which we refer to \cite[section~3]{turaev2013graded}.

To a $G$-graded category we associate its $G$-center $\Zsf_G(\Ccal)$, which is defined to be the relative center with respect to $\Ccal_{e}$. To be a bit more explicit, the $G$-center has objects pairs $(c,\gamma_{c,\bullet})$, where $c\in \Ccal$ and $\gamma_c$ is a relative half-braiding for objects in the neutral component $\Ccal_e$, i.e. 
\eq{
\gamma_{c,\bullet}=\lbr \gamma_{c,X}:c\otimes X\rightarrow X\otimes c\rbr_{X\in C_{e}} 
}
is a natural isomorphism satisfying the usual hexagon relation. A morphism $(c,\gamma_c)\rightarrow (d,\gamma_d)$ is a morphism $f:c\rightarrow d$ in $\Ccal$ satisfying $(\Idrm_X\otimes f)\circ\gamma_{c,X}=\gamma_{d,X}\circ (f\otimes \Idrm_X)$. The $G$-center obviously is a $G$-graded category. Similar to the non-graded case, for $\Ccal$ a $G$-graded spherical fusion category, the center has the structure of a $G$-modular category. Of course, for a $G$-graded category there also exists its Drinfeld center $\Zsf(\Ccal)$. The $G$-center can be very different from the Drinfeld center. However the two are related via orbifolding \cite[Theorem~3.5]{gelaki2009centers}. Since we don't need the full $G$-modular structure on $\Zsf_G(\Ccal)$ we simply refer to \cite{turaev2013graded} for the definition of a $G$-modular category. We only need that $\Zsf_G(\Ccal)$ is $G$-crossed and we explicitly give the $G$-crossing. 

Similar to the non-equivariant case, there is an adjunction 
\eq{
I:\Ccal\rightleftharpoons \Zsf_G(\Ccal):F,\qquad I\dashv F
}
with $F$ the forgetful functor and its adjoint functor $I$, the \textit{induction functor}, defined on objects 
\eq{\label{induction functor}
I(c)\coloneqq \bigoplus_{i\in I_e}i^\ast\otimes c\otimes i\, , 
}
where $I_e$ is a set of simple objects in $\Ccal_e$. Its action on morphisms is the obvious one. To have the structure of an element in the graded center, the object $I(c)$ is equipped with the standard non-crossing half-braiding

\begin{figure}[H]
    \def \h{3}
    \def \w{1}
    \centering
    \resizebox{0.7\width}{!}{
    \begin{tikzpicture}
    \node[anchor=east] at (0,0) (an) {\LARGE $\displaystyle{\sum_{i,j \in I_e} d_j}$};
        \node[circle, draw] at ($(an) + (2*\w,0)$) (bn) {$\alpha$};
        \draw[mid] (bn) -- ($(bn) + (0,-\h)$) node[above right] {\Large $i$};
        \draw[mid] ($(bn) + (0,\h)$) node[below right] {\Large $j$} -- (bn);
        \draw[mid] ($(bn) + (\w,-\h)$) -- ++(0,2*\h) node[midway, xshift=0.3cm] {\Large $c$};
        \node[circle, draw] at ($(bn) + (2*\w,0)$) (cn) {$\alpha$};
        \draw[mid] ($(cn) + (0,-\h)$) node[above right] {\Large $i$} -- (cn);
        \draw[mid] (cn) -- ($(cn) + (0,\h)$) node[below right] {\Large $j$};
        \draw[mid, violet] (bn) .. controls ($(bn) + (-\w,0.25*\h)$) and ($(bn) + (-\w,0.25*\h)$) .. ($(bn) + (-\w,\h)$) node[below left, text=violet] {\Large $X$};
        \draw[mid, violet] ($(cn) + (\w, -\h)$) node[above right, text=violet] {\Large $X$} .. controls ($(cn) + (\w, -0.25*\h)$) and ($(cn) + (\w, -0.25*\h)$) .. (cn);
        \node at (-3,0) {\Large $\gamma_{I(c),X}\coloneqq$};
        
    \end{tikzpicture}}
\end{figure}

where $X\in \Ccal_e$.  (Recall from Figure \ref{completeness relation} that the
pairwise appearance of $\alpha$ implies a summation over dual bases.)
We will continue with this notation, as figures tend to become overloaded with notation otherwise. Due to the compatibility condition with the half-braiding $\hom_{\Zsf_G(\Ccal)}(a,b)\subset \hom_\Ccal(a,b)$ is a proper subspace. 
(Here, by abuse of notation, we identify $a\in \Zsf_G(\Ccal)$ with the
underlying object in $\Ccal$.)
An idempotent $P:\hom_\Ccal(a,b)\rightarrow \hom_\Ccal(a,b)$ with $\Imrm(P)=\hom_{\Zsf_G(\Ccal)}(a,b)$ is given by

\begin{figure}[H]
    \def \h{3}
    \def \w{2}
    \centering
    \resizebox{0.7\width}{!}{
    \begin{tikzpicture}
        \node[anchor=east] at (0,0) (a) {\LARGE $\displaystyle{P(f) \coloneqq}$};
        \node[circle, draw] at (\w, 0) (f) {\Large $f$};
        \draw[violet] (f) circle (0.75*\w);
        \node[draw, fill=white, white] at ($(f) + (0,0.75*\w)$) {};
        \node[draw, fill=white, white] at ($(f) + (0,-0.75*\w)$) {};
        \draw[mid] ($(f) + (0, -\h)$) node[above right] {\Large $a$} -- (f);
        \draw[mid] (f) -- ($(f) + (0, \h)$) node[below right] {\Large $b$};
        
    \end{tikzpicture}}
\end{figure}

where we introduced the \textit{cloaking circle} in $\Ccal$

\begin{figure}[H]
    \def \h{3}
    \def \w{2}
    \def \r{0.75 * \w}
    \centering
    \resizebox{0.7 \width}{!}{
    \begin{tikzpicture}
        \draw[violet] (0,0) circle (\r);
        \node[anchor=west] at (\w,0) (a) {\Large $=\displaystyle{\sum_{i \in I_e}} \frac{d_i}{D}$};
        \draw[mid] ($(a.east) + (\w,0)$) circle (\r) node[xshift=\r cm + 0.25cm] {\Large $i$};
    \end{tikzpicture}}
\end{figure}

and the crossings of the cloaking circle with the $a$ and $b$-labeled lines correspond to the half-braidings of $a$ and $b$. The proof of that $P$ is an idempotent is exactly the same as in the non-graded case (c.f. \cite{BalsamKirillov}).
The existence of a $G$-crossing on $\Zsf_G(\Ccal)$ for $\Ccal$ a $G$-fusion category was proven in \cite{gelaki2009centers}. An explicit expression for a $G$-crossing on $\Zsf_G(\Ccal)$ is given in \cite{turaev2013graded}, where $\Ccal$ only needs to be a non-singular\footnote{See \cite[section~4.1]{turaev2013graded} for the definition of a non-singular category.}, pivotal $G$-graded category. We only work with spherical $G$-fusion categories, which are automatically non-singular. To define a $G$-crossing we generalize \eqref{induction functor} and consider a functor $I^h:\Zsf_G(\Ccal)\rightarrow \Ccal$, which acts on objects as follows $I^h(c)=\bigoplus_{i\in I_h}i^\ast\otimes c\otimes i$. The action on morphisms is the obvious one. We want to construct a $G$-crossing on $\Zsf_G(\Ccal)$ from $I^h$. In order to do so, we consider the idempotent

\begin{figure}[H]
    \def \h{3}
    \def \w{2}
    \def \r{0.75 * \w}
    \centering
    \resizebox{0.7\width}{!}{
    \begin{tikzpicture}
        \node at (0,0) (a) {\LARGE $\displaystyle{\pi^h_c \colon I^h(c)}$};
        \node[anchor=west] at ($(a.east) + (\w,0)$) (b) {\LARGE $\displaystyle{I^h(c)}$};
        \draw[-{To[scale=1.5]}] (a.east) -- (b.west);
        
        \node[anchor=west] at ($(a.west) + (0,-1.25*\h)$) (c) {\LARGE $\displaystyle{\pi^h_c \coloneqq \sum_{i,j \in I_h} \frac{d_i}{D}}$};
        \draw[mid={>}{0.9}] ($(c.east) + (0,+0.75*\h)$) .. controls ++(0,-1.5) and ++(0,-1.5) .. ($(c.east) + (1*\w,+0.75*\h)$) node[below right] {\Large $j$} node[midway, fill=white] {};
        \draw[mid={<}{0.9}] ($(c.east) + (0*\w,-0.75*\h)$) .. controls ++(0,1.5) and ++(0,1.5) .. ($(c.east) + (1.*\w,-0.75*\h)$) node[above right] {\Large $i$} node[midway, fill=white] {};
        \draw[mid] ($(c.east) + (0.5*\w,-0.75*\h)$) -- ($(c.east) + (0.5*\w,0.75*\h)$) node[midway, xshift=0.5cm] {\Large $c$};
    
    \end{tikzpicture}}
\end{figure}

where we again use the half-braiding $\gamma_{c,i^\ast\otimes j}$ to braid the strands. It is shown in \cite{gelaki2009centers} that $\sum_{i\in I_{e}}d_i^2=\sum_{U\in I_h}d_U^2$ for all $h\in G$. Using this, it is straightforward to show that $\pi_c$ is indeed an idempotent. Its image is denoted $P(c)\coloneqq \Imrm(\pi_c)$, restriction and inclusion maps are depicted as follows

\begin{figure}[H]
    \def \h{3}
    \def \w{2}
    \makebox[\textwidth][c]{
    \resizebox{0.7\width}{!}{
    \begin{tikzpicture}
        \node at (0,0) (a) {\LARGE $\displaystyle{e_c \colon P(c)}$};
        \node[anchor=west] at ($(a.east) + (\w,0)$) (b) {\LARGE $\displaystyle{I^h(c)}$};
        \draw[{|[scale=1.5]}-{To[scale=1.5]}] (a.east) -- (b.west);
        
        \node[anchor=west] at ($(a.west) + (0,-1.25*\h)$) (c) {\LARGE $\displaystyle{e_c \coloneqq \sum_{i \in I_h}}$};
        \draw[mid] ($(c.east) + (0.5*\w,-2)$) node[above right] {\Large $P(c)$} -- ++(0,2) coordinate (x);
        \draw (x) -- ++(1, 1) -- ++(-2,0) -- cycle;
        
        \draw[mid] ($(x) + (0,1)$) -- ++(0,2) node[below right] {\Large $c$};
        \draw[mid={<}{0.5}] ($(x) + (-0.75,1)$) -- ++(0,2);
        \draw[mid] ($(x) + (0.75,1)$) -- ++(0,2) node[below right] {\Large $i$};
        
        \node at (8,0) (a2) {\LARGE $\displaystyle{r_c \colon I^h(c)}$};
        \node[anchor=west] at ($(a2.east) + (\w,0)$) (b2) {\LARGE $\displaystyle{P(c)}$};
        \draw[{|[scale=1.5]}-{To[scale=1.5]}] (a2.east) -- (b2.west);
        
        \node[anchor=west] at ($(a2.west) + (0,-1.25*\h)$) (c2) {\LARGE $\displaystyle{r_c \coloneqq \sum_{i \in I_h}}$};
        \draw[mid={<}{0.5}] ($(c2.east) + (0.5*\w,3)$) node[below right] {\Large $P(c)$} -- ++(0,-2) coordinate (x2);
        \draw (x2) -- ++(-1, -1) -- ++(2,0) -- cycle;
        
        \draw[mid={<}{0.5}] ($(x2) + (0,-1)$) -- ++(0,-2) node[above right] {\Large $c$};
        \draw[mid] ($(x2) + (-0.75,-1)$) -- ++(0,-2);
        \draw[mid={<}{0.5}] ($(x2) + (0.75,-1)$) -- ++(0,-2) node[above right] {\Large $i$};
        
    \end{tikzpicture}}}
\end{figure}
The image $P(c)$ has half-braiding 

\begin{figure}[H]
    \def\s{1}
    \centering
    \resizebox{0.7\width}{!}{
    \begin{tikzpicture}
        \node[anchor=west] at (0,0) (a) {\LARGE $\displaystyle{\gamma_{P(c),X} \coloneqq \sum_{i,j \in I_h} \frac{d_i}{D}}$};
        
        \draw[mid] ($(a.east) + (2,-3.5)$) node[above right] {\Large $P(c)$} -- ++(0,1) coordinate (x);
        \draw (x) -- ++(1,1) -- ++(-2,0) -- cycle;
        
        \node at ($(x) + (0,4)$) (y) {};
        \draw[mid] ($(y) + (0,1)$) -- ++(0,1) node[below right] {\Large $P(c)$} coordinate (z);
        \draw ($(y) + (0,1)$) -- ++(-1,-1) -- ++(2,0) -- cycle;
        
        \draw[mid={<}{0.2}] ($(y) + (0.75, 0)$) node[below right] {\Large $j$} .. controls ++(0,-1) and ++(0,-1) .. ($(y) + (-0.75, 0)$) node[midway, fill=white] {};
        \draw[mid={>}{0.2}] ($(x) + (0.75, 1)$) node[above right] {\Large $i$} .. controls ++(0,1) and ++(0,1) .. ($(x) + (-0.75, 1)$) node[midway, fill=white] {};
        \draw[mid={>}{0.1}, violet] ($(a.east) + (4,-3.5)$) node[above right, violet] {\Large $X$} .. controls ++(0,6) and ++(0,-6) ..($(z) + (-2,0)$) node[midway, fill=white] {};
        
        \draw[mid={>}{0.5}] ($(x) + (0,1)$) -- (y.center) node[midway, xshift=-0.25cm, yshift=-0.4cm] {\Large $c$};
        
    \end{tikzpicture}}
\end{figure}

where $X\in \Ccal_e$ and we use again the half-braiding $\gamma_{c,i\otimes X\otimes j^\ast}$. Thus $(P(c),\gamma_{P(c),\bullet})\in \Zsf_G(\Ccal)_{h^{-1}gh}$. From that, we can give an explicit $G$-crossing for $\Zsf_G(\Ccal)$.
\begin{prop}\label{new crossing} The maps
\eq{
\phi_h:\Zsf_G(\Ccal)_g&\rightarrow \Zsf_G(\Ccal)_{h^{-1}gh}\\
c&\mapsto (P(c),\gamma_{P(c),\bullet})
}
constitute a $G$-crossing on $\Zsf_G(\Ccal)$. 
\end{prop}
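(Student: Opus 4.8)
The plan is to verify, in turn, the three ingredients of a $G$-crossing: that each $\phi_h$ is a monoidal autoequivalence of $\Zsf_G(\Ccal)$ with the stated effect on the grading, that the assignment $h\mapsto\phi_h$ assembles into a strong monoidal functor $\ov{G}\to\Autrm_\otimes(\Zsf_G(\Ccal))$, and that the accompanying structure isomorphisms satisfy the coherence diagrams of \cite[section~3]{turaev2013graded}. The grading statement $\phi_h(\Zsf_G(\Ccal)_g)\subset\Zsf_G(\Ccal)_{h^{-1}gh}$ is precisely the object-level computation $(P(c),\gamma_{P(c),\bullet})\in\Zsf_G(\Ccal)_{h^{-1}gh}$ already carried out above, so what remains is to supply the action on morphisms, the monoidal structure on each $\phi_h$, and the compositors relating different $h$.

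First I would fix $\phi_h$ on morphisms. For $f\colon c\to d$ in $\Zsf_G(\Ccal)_g$ I set $\phi_h(f)\coloneqq r_d\circ I^h(f)\circ e_c$, where $I^h(f)=\bigoplus_{i\in I_h}\Idrm_{i^\ast}\otimes f\otimes\Idrm_i$. Since $f$ intertwines the half-braidings $\gamma_c$ and $\gamma_d$, a short graphical manipulation shows that $I^h(f)$ commutes with the idempotent $\pi^h_c$ and that $\phi_h(f)$ intertwines $\gamma_{P(c),\bullet}$ and $\gamma_{P(d),\bullet}$; hence $\phi_h(f)$ is a genuine morphism in the $G$-center. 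Functoriality ($\phi_h(g\circ f)=\phi_h(g)\circ\phi_h(f)$ and $\phi_h(\Idrm)=\Idrm$) then follows from $r_c\circ e_c=\Idrm_{P(c)}$ together with $e_c\circ r_c=\pi^h_c$.

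Next I would equip each $\phi_h$ with a strong monoidal structure. The comparison isomorphism $\phi_h(c)\otimes\phi_h(d)\xrightarrow{\simeq}\phi_h(c\otimes d)$ is built from the fusion of $\Ccal_h$: a summand $i^\ast\otimes c\otimes i\otimes j^\ast\otimes d\otimes j$ of $I^h(c)\otimes I^h(d)$ is rewritten, by using the half-braiding of $c$ to pull the $i$-strand through $j^\ast\otimes d$, into summands indexed by the simples occurring in $i\otimes j$; projecting with $\pi^h$ and invoking the normalisation identity $\sum_{i\in I_e}d_i^2=\sum_{U\in I_h}d_U^2$ of \cite{gelaki2009centers} identifies the outcome with $I^h(c\otimes d)$ after projection. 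I would then check that this map lies in $\Zsf_G(\Ccal)$ (compatibility with the half-braidings), is natural in $c$ and $d$, and is compatible with the associator and unit; these are diagram chases in the graphical calculus.

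Finally, the compositor. I would construct natural isomorphisms $\eta_{h,g}\colon\phi_g\circ\phi_h\Rightarrow\phi_{hg}$ and $\eta_0\colon\Idrm_{\Zsf_G(\Ccal)}\Rightarrow\phi_e$. A summand $i^\ast\otimes j^\ast\otimes c\otimes j\otimes i$ of $I^g(I^h(c))$ with $i\in I_g$, $j\in I_h$ equals $(j\otimes i)^\ast\otimes c\otimes(j\otimes i)$ with $j\otimes i\in\Ccal_{hg}$; decomposing $j\otimes i$ into simples of $I_{hg}$ and comparing the induced idempotents yields $\eta_{h,g}$, while $\eta_0$ identifies $\phi_e$ with the identity, using that for an already-central object the $e$-twisted construction collapses to $c$ by means of its half-braiding and the same normalisation. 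Since $\eta_{h,h^{-1}}$ and $\eta_{h^{-1},h}$ then exhibit $\phi_{h^{-1}}$ as a two-sided quasi-inverse of $\phi_h$, each $\phi_h$ is automatically an equivalence. \textbf{The main obstacle} is the coherence of the $\eta_{h,g}$, i.e.\ the pentagon-type diagram expressing strong monoidality of $\rho$: one must match the two ways of reassociating a triple twist $\phi_{g'}\phi_g\phi_h(c)$, which reduces to the associativity of fusion in $\Ccal$ (equivalently the pentagon relation for the $6j$-symbols) together with repeated use of the dimension identity, all while keeping the half-braidings consistent. Alternatively, since $\Zsf_G(\Ccal)$ is already known to be $G$-crossed \cite{gelaki2009centers}, one may instead identify $\phi_h$ with the crossing of \cite{turaev2013graded} as monoidal functors and transport the coherence, bypassing the explicit pentagon computation.
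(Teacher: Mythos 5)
Your overall strategy is the right one and is, in substance, what the paper relies on: the paper writes out no proof at all, but asserts that "up to a reordering of tensor factors" the argument is that of \cite[section~4]{turaev2013graded}, which is exactly the kind of direct verification you outline (action on morphisms, monoidal structure on each $\phi_h$, compositors, coherence). Your closing alternative of transporting coherence from the known crossing likewise parallels the remark the paper places after the proposition, which notes that its crossing differs from that of \cite{turaev2013graded} but is equivalent to it as a $G$-crossed structure. Your treatment of the action on morphisms (via $r_d\circ I^h(f)\circ e_c$ and the commutation of $I^h(f)$ with $\pi^h_c$), of the compositor $\eta_{h,g}$ via decomposing $j\otimes i\in\Ccal_{hg}$ into simples of $I_{hg}$, and of the unit $\eta_0$ are all correct.

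However, the monoidal-structure step contains a genuine error as written. First, ``using the half-braiding of $c$ to pull the $i$-strand through $j^\ast\otimes d$'' is not a licensed move: objects of $\Zsf_G(\Ccal)$ carry half-braidings only against objects of the neutral component $\Ccal_e$, and the strand $i\in I_h$ has degree $h$. Second, and more seriously, the simple objects occurring in $i\otimes j$ lie in $\Ccal_{h^2}$ (since $i,j\in I_h$), so they cannot index the summands of $I^h(c\otimes d)=\bigoplus_{k\in I_h}k^\ast\otimes c\otimes d\otimes k$, which requires degree-$h$ simples; for $h\neq e$ the comparison map you describe does not exist. The correct construction needs no fusion decomposition at all: in the summand $i^\ast\otimes c\otimes i\otimes j^\ast\otimes d\otimes j$ the object $i\otimes j^\ast$ has degree $e$, so one braids it past $d$ using $\gamma_{d,i\otimes j^\ast}^{-1}$ (or past $c$ using $\gamma_{c,i\otimes j^\ast}$) and then evaluates the adjacent dual pair, landing in the summand of $I^h(c\otimes d)$ indexed by $i$ (respectively $j$); the completeness relation together with the identity $\sum_{i\in I_e}d_i^2=\sum_{U\in I_h}d_U^2$ then shows that, after projecting with the idempotents $\pi^h$, these maps are mutually inverse and central. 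The fusion decomposition you invoke is the one relevant to the compositor $\eta_{h,g}$, where $j\otimes i$ genuinely has degree $hg$, and there you use it correctly; the error lies only in transplanting that mechanism into the monoidal comparison.
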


Up to a reordering of tensor factors the proof is  the same as the one given in \cite[section~4]{turaev2013graded}, hence we skip it here. 
\begin{rem} The $G$-crossing we defined is not the same as the $G$-crossing given in \cite{turaev2013graded}. However, in the semi-simple case, it is not hard to show that the center equipped with the two different $G$-crossings are equivalent as $G$-crossed categories. Our definition is motivated by string-net constructions on cylinders (cf. section \ref{cylinderSection}).
\end{rem}

\section{Once extended $G$-equivariant HTQFTs}

The modern formulation of once extended HTQFTs uses the language of bicategories and 2-functors. A suitable definition of a symmetric monoidal bicategory $\GBordpzc(n,n-1,n-2)$ of once extended $G$-equivariant bordisms was given in \cite{schweigert2020extended}. For technical reasons we need a slight modification of this bicategory and consider pointed maps to $BG$ for objects. For convenience, we will also choose base points for one-dimensional manifolds in the following definition. As we will explain in Remark 3.2, these choices are not really essential 

In the following a manifold $M$ is \textit{pointed}, if for each of its connected components a distinguished basepoint has been chosen. A \textit{map between pointed manifolds $M\xrightarrow{f} N$} is a continuous, basepoint-preserving map. In addition, once and for all we choose a basepoint $\star\in BG$.

\begin{defn} The symmetric monoidal bicategory $\GBordpzc_\star(3,2,1)$ is given by:
\begin{itemize}
\item[(0)] Objects of $\GBordpzc_\star(3,2,1)$ are pairs $(M,f)$ of a pointed, closed, oriented $1$-dimensional manifold $M$ and a pointed map $f:M\rightarrow (BG,\star)$. 
\item[(1)] A $1$-morphism is a pair $(\Sigma,\zeta):(M_0,f_0)\rightarrow (M_1,f_1)$ consisting of an oriented, compact, collared 2-dimensional manifold with boundary $\Sigma$, orientation preserving diffeomorphisms $\iota_0:M_0\times (-1,0]\rightarrow \Sigma_0$, $\iota_1:M_1\times [0,1)\rightarrow \Sigma_1$ and a map $\zeta:\Sigma\rightarrow BG$, such that the diagram 
\begin{figure}[H]
\centering
\begin{tikzcd}
& \Sigma\ar[dd,"\zeta"] &\\
M_0\times\lbr 0\rbr \ar[ur,"\iota_0"]\ar[dr,"f_0"'] & & M_1\times \lbr 0\rbr \ar[ul,"\iota_1"']\ar[dl,"f_1"]\\
& BG &
\end{tikzcd}
\label{1-morph}
\end{figure}
\noindent
commutes. Here $\Sigma_0\cup \Sigma_1$ is a collar for $\Sigma$. Note that no basepoint is chosen for $\Sigma$.

\item[(2)] A $2$-morphism $(W,\phi):(\Sigma_0,\zeta_0)\Rightarrow (\Sigma_1,\zeta_1)$ in $\GBordpzc_\star(3,2,1)$ is a diffeomorphism class of a $3$-dimensional, collared, compact oriented bordism $W:\Sigma_0\rightarrow \Sigma_1$ together with a map $\phi:W\rightarrow BG$ satisfying a compatibility diagram for the maps into $BG$. 
\end{itemize}

For the precise definition of $2$-morphisms, composition of morphisms and the symmetric monoidal structure in $\GBordpzc_\star(3,2,1)$ we refer to \cite[Definition~2.3]{schweigert2020extended}.  $1$-morphisms in $\GBordpzc_\star(3,2,1)$ are also referred to as $G$-surfaces.
\end{defn}

\begin{rem}
Since $BG$ is path connected there is an equivalence of symmetric monoidal bicategories $\GBordpzc_\star(3,2,1)$ and $\GBordpzc(3,2,1)$. The equivalence is given by the forgetful functor $\GBordpzc_\star(3,2,1)\rightarrow \GBordpzc(3,2,1)$, forgetting the base point on objects. Since any map $S^1\rightarrow BG$ is homotopy equivalent to a basepoint preserving map, the forgetful functor is essentially surjective on objects. Note that $1$- and $2$-morphisms are literally the same for $\GBordpzc_\star(3,2,1)$ and $\GBordpzc(3,2,1)$, thus the two bicategories are equivalent as symmetric monoidal bicategories.
\end{rem}

\begin{defn} A $3$-dimensional, once extended $G$-equivariant HTQFT with values in a symmetric monoidal bicategory $\Scal$ is a symmetric monoidal $2$-functor
\eq{
Z:\, \GBordpzc_\star(3,2,1)\rightarrow \Scal
}
with the additional requirement that $Z$ depends only on the homotopy class 
realtive to the boundary of the map $\phi:W\rightarrow BG$ for a $2$-morphism $(W,\phi)$. 
\end{defn}

In our definition of the $G$-equivariant bordism bicategory and once extended $G$-equivariant HTQFTs, one has pointed maps to the classifying space as data and all compatibility diagrams strictly commute. Homotopy invariance is built in as a property of the functor $Z$. In \cite{turaev20203} a different formulation is given. The authors set up a category $\GCobpzc$ with objects pointed surfaces equipped with homotopy classes of pointed maps to the classifying space. Morphisms in $\GCobpzc$ are $3$-dimensional cobordisms, which again come with a homotopy class of pointed maps to $BG$. Diagrams of maps to $BG$ then only have to commute up to homotopy. A comparison between the two formulations is subtle. We chose the bicategorical framework, since we want to construct the $(2,1)$-part of a once extended HTQFT using string-nets. Bicategories are the natural algebraic framework for such a construction.

Though the definition of a once extended HTQFT is formulated for arbitrary symmetric monoidal bicategories as targets, we will only work with the category $\BiModpzc_\Kbb$ of bimodules as targets. This symmetric monoidal bicategory is standard \cite[Proposition~7.8.2]{borceux_1994}, see e.g. \cite[Definition~2.8]{bartlett2015modular} for the $\Vectpzc_\Kbb$-enriched version. Its objects are linear categories. A $1$-morphism $\Ccal\xrightarrow{F}\Dcal$ is a linear functor 
\eq{
\Dcal^{op}\boxtimes \Ccal\rightarrow \Vectpzc_\Kbb\, ,
}
where $\Dcal^{op}\boxtimes \Ccal$ has objects pairs $(d,c)\in \Dcal^{op}\times \Ccal$ and $\hom_{\Dcal^{op}\boxtimes \Ccal}\left((d,c),(d^\prime,c^\prime)\right)\coloneqq \hom_{\Dcal^{op}}(d,d^\prime)\otimes_\Kbb\hom_{\Ccal}(c,c^\prime)$. Another name for $1$-morphisms are bimodules, or profunctors. A $2$-morphism simply is a natural transformation between linear functors. The enriched tensor product gives $\BiModpzc_\Kbb$ a symmetric monoidal structure. Given profunctors $F:\Ecal^{op}\boxtimes \Dcal\rightarrow \Vectpzc_\Kbb$ and $G:\Dcal^{op}\boxtimes \Ccal\rightarrow \Vectpzc_\Kbb$ the composition $F\circ G:\Ecal^{op}\boxtimes \Ccal\rightarrow \Vectpzc_\Kbb$ is given by the coend
\eq{
(F\circ G)(e,c)\coloneqq \int^{d\in \Dcal}F(e,d)\otimes G(d,c)\quad . 
}
\section{$G$-equivariant Ptolemy Groupoid}\label{Ptolemy section}

In this section we introduce $G$-triangulations on surfaces, which are an enhancement of ideal triangulations of a surface. The main result is Theorem \ref{G scc}. It allows us to treat any $G$-surface as combinatorial object. 

We consider an oriented, compact smooth surface $\Sigma$ with $r>0$ boundary components. We choose a distinguished point on each connected component of the boundary and denote by $\delta=\lbr \delta_1,\cdots, \delta_r\rbr $ the chosen set of points. In addition, we fix an arbitrary finite set $M\subset \Sigma\backslash \p \Sigma$ of marked points. In case $\Sigma$ is a disk, $M$ has to contain at least one element, and in all other cases $M$ may be empty.

\begin{defn}\cite[Definition~1.19]{PennerBook} An ideal triangulation $\Tsf$ of $(\Sigma,\delta, M)$ consists of a collection $\lbr \alpha_i\rbr_{i\in I}$ of isotopy classes of embedded arcs, having endpoints in $\delta\cup M$, such that for every boundary component $b$, its isotopy class relative to $\delta\cup M$ is contained in $\lbr \alpha_i\rbr_{i\in I}$ and $\Sigma-\cup_{i\in I}\alpha_i$ is a disjoint union of triangles. 
\end{defn}

The dual graph to an ideal triangulation is a uni-trivalent fat graph, whose cyclic order at vertices is induced by the orientation of the boundaries of dual triangles. The orientation of the boundary of a triangle is, of course, induced by the orientation of $\Sigma$.

The $1$-skeleton $\Tsf^{[1]}$ of $\Tsf$ is an isotopy class of an embedded graph and when speaking of vertices, edges and faces of an ideal triangulation, we always mean vertices, edges and faces of $\Tsf^{[1]}$. Ideal triangulations aren't triangulations in general, e.g. we may encounter bubble graphs like in figure \ref{figure1}.

\begin{figure}
\centering
\begin{tikzpicture}
\coordinate (a) at (0,0);
\coordinate (b) at (0,-1.5);

\draw (a) circle (1.5);
\draw (a) -- (b);

\fill (a) circle (2pt);
\fill (b) circle (2pt);
\end{tikzpicture}
\caption{One configuration of possible arcs in an ideal triangulation. By cutting along the edges one obtains an honest triangle.}
\label{figure1}
\end{figure}
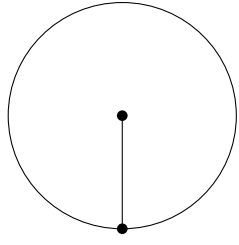

It is well known that any two triangulations of a surface can be transformed into each other by a finite sequence of $2-2$ and $3-1$ Pachner moves. Though ideal triangulations aren't triangulations, there is an analog of the $2-2$-Pachner move for ideal triangulations. 

\begin{defn} Let $f\in E(\Tsf)$ be such that $f$ is adjacent to two different $2$-faces of $\Tsf$ and $f$ is not a boundary edge. A \textit{flip along $f$} is the move shown in figure \ref{flip move}.
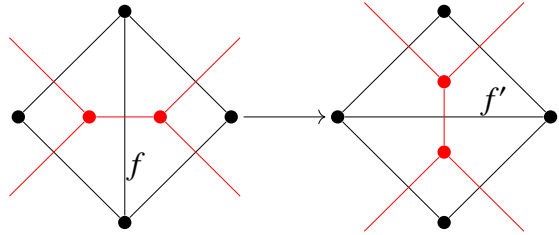
\begin{figure}[H]
    \def\t{6}
    \centering
    \resizebox{0.7\width}{!}{
    \begin{tikzpicture}
        
        \node[circle, fill=black, inner sep=0, minimum size=2.5mm] (p1) at (0,0) {};
        \node[circle, fill=black, inner sep=0, minimum size=2.5mm] (p2) at (2,2) {};
        \node[circle, fill=black, inner sep=0, minimum size=2.5mm] (p3) at (4,0) {};
        \node[circle, fill=black, inner sep=0, minimum size=2.5mm] (p4) at (2,-2) {};
        
        \node[circle, fill=black, inner sep=0, minimum size=2.5mm] (q1) at ($(p1) + (\t,0)$) {};
        \node[circle, fill=black, inner sep=0, minimum size=2.5mm] (q2) at ($(p2) + (\t,0)$) {};
        \node[circle, fill=black, inner sep=0, minimum size=2.5mm] (q3) at ($(p3) + (\t,0)$) {};
        \node[circle, fill=black, inner sep=0, minimum size=2.5mm] (q4) at ($(p4) + (\t,0)$) {};
        
        \draw (p2) -- (p4) node[near end, xshift=0.2cm] {\Large $f$}
              (p1) -- (p2) 
                   -- (p3) 
                   -- (p4) 
                   -- (p1);
                   
        \draw[-{To[scale=1.5]}] ($(p3.east) + (1mm, 0)$) -- ($(q1.west) + (-1mm, 0)$);
        
        \draw (q1) -- (q3) node[near end, yshift=0.3cm] {\Large $f'$}
              (q1) -- (q2) 
                   -- (q3) 
                   -- (q4) 
                   -- (q1); 
        
        \node[circle, fill=red, inner sep=0, minimum size=2.5mm] (d1) at (barycentric cs:p1=0.5,p2=0.5,p4=0.5) {};
        \node[circle, fill=red, inner sep=0, minimum size=2.5mm] (d2) at (barycentric cs:p3=0.5,p2=0.5,p4=0.5) {};
        \node[circle, fill=red, inner sep=0, minimum size=2.5mm] (d3) at (barycentric cs:q1=0.5,q2=0.5,q3=0.5) {};
        \node[circle, fill=red, inner sep=0, minimum size=2.5mm] (d4) at (barycentric cs:q1=0.5,q3=0.5,q4=0.5) {};
        
        \draw[red] (d1) -- (d2)
                   (d1) -- ++(-1.5,1.5)
                   (d1) -- ++(-1.5,-1.5)
                   (d2) -- ++(1.5,1.5)
                   (d2) -- ++(1.5,-1.5);
                   
        \draw[red] (d3) -- (d4)
                   (d3) -- ++(-1.5,1.5)
                   (d3) -- ++(1.5,1.5)
                   (d4) -- ++(-1.5,-1.5)
                   (d4) -- ++(1.5,-1.5);

    \end{tikzpicture}}
    \caption{Flip move along the edge $f$. The red lines show the dual fat graph. }
    \label{flip move}
\end{figure}
\end{defn}

In contrast to triangulations, there is no $3-1$ move for ideal triangulation, since we work with a fixed set of vertices. To be more precise, there exists a $2$-dimensional CW-complex $\Pcal(\Sigma,\delta,M)$, the Ptolemy-complex, describing ideal triangulations on $(\Sigma,\delta,M)$. Like the Lego-Teichm\"uller complex, the Ptolemy-complex is additional structure om $\Sigma$, which enables us to treat the surface $\Sigma$ in a combinatorial way.

\begin{defn}\label{Ptolemy definition} The \textit{Ptolemy complex} $\Pcal(\Sigma,\delta, M)$ is the $2$-dimensional CW-complex with 
\begin{itemize}
\item[\textbf{$0$-cells:}] Vertices of $\Pcal(\Sigma,\delta, M)$ are ideal triangulations.
\item[\textbf{$1$-cells:}] There is an edge between two vertices for any flip $F_e:\Tsf\rightarrow \Tsf^\prime$ as in figure \ref{flip move}.
\item[\textbf{$2$-cells:}] There are three types of $2$-cells:
\begin{itemize}
\item[]\textbf{P1:} For $f\in E(\Tsf)$ such that the flip $F_f$ exists, there is the $2$-cell
\begin{center}
\begin{tikzpicture}
\coordinate[label=above:{$\Tsf$}] (a) at (0,0);
\coordinate[label=below:{$\Tsf^\prime$}] (b) at (0,-2);

\fill (a) circle (2pt);
\fill (b) circle (2pt);

\draw[->-] (a) to [bend right] node [midway, left]{$F_f$} (b);
\draw[->-] (b) to [bend right] node [midway, right]{$F_{f^\prime}$} (a);
\end{tikzpicture}

The edge $f^\prime$ is the new edge appearing in the flip (see figure \ref{flip move}).
\end{center}
\item[]\textbf{P2:} For any two edges $e$, $f\in E(\Tsf)$ with disjoint endpoints 
such that $F_e$ and $F_f$ are defined, the flips commute, i.e. there is a quadrilateral $2$-cell
\begin{center}
\begin{tikzpicture}
\coordinate[label=above left:$\Tsf$] (a) at (0,0);
\coordinate[label=above right:$\Tsf_1^\prime$] (b) at (2,0);
\coordinate[label=below left:$\Tsf_2^{\prime}$] (c) at (0,-2);
\coordinate[label=below right:$\Tsf^{\prime\prime}$] (d) at (2,-2);

\fill (a) circle (2pt);
\fill (b) circle (2pt);
\fill (c) circle (2pt);
\fill (d) circle (2pt);

\draw[->-] (a) to node [midway, above]{$F_e$} (b);
\draw[->-] (a) to node [midway, left]{$F_f$} (c);
\draw[->-] (b) to node [midway, right]{$F^\prime_f$} (d);
\draw[->-] (c) to node [midway, below]{$F^\prime_e$} (d);
\end{tikzpicture}
\end{center}
The flips $F^\prime_e$, $F^\prime_f$ are the flips performed at the edges $e$, respectively $f$, which are now part of the new ideal triangulations $T^\prime_1$ and $T^\prime_2$.
\item[]\textbf{P3:} Given two edges $e$, $f\in E(\Tsf)$ sharing exactly one endpoint and $F_e$, $F_f$ exist, there is a pentagonal $2$-cell

\begin{figure}[H]
    \def\s{2}
    \centering
    \resizebox{\width}{!}{
    \begin{tikzpicture}
        
        \node[circle, fill=black, inner sep=0, minimum size=2.5mm] (a0) at (0,0) {};
        \node[circle, fill=black, inner sep=0, minimum size=2.5mm] (b0) at ($(a0) + (36:\s)$) {};
        \node[circle, fill=black, inner sep=0, minimum size=2.5mm] (c0) at ($(a0) + (-72:\s)$) {};
        \node[circle, fill=black, inner sep=0, minimum size=2.5mm] (d0) at ($(c0) + (\s,0)$) {};
        \node[circle, fill=black, inner sep=0, minimum size=2.5mm] (e0) at ($(d0) + (72:\s)$) {};
        
        \node[circle, fill=black, inner sep=0, minimum size=1.5mm] (d1) at ($(a0) + (144:\s)$) {};
        \node[circle, fill=black, inner sep=0, minimum size=1.5mm] (e1) at ($(d1) + (72:0.5*\s)$) {};
        \node[circle, fill=black, inner sep=0, minimum size=1.5mm] (c1) at ($(d1) + (-0.5*\s,0)$) {};
        \node[circle, fill=black, inner sep=0, minimum size=1.5mm] (a1) at ($(c1) + (108:0.5*\s)$) {};
        \node[circle, fill=black, inner sep=0, minimum size=1.5mm] (b1) at ($(a1) + (36:0.5*\s)$) {};
        
        \node[circle, fill=black, inner sep=0, minimum size=1.5mm] (c2) at ($(b0) + (-0.25*\s,\s)$) {};
        \node[circle, fill=black, inner sep=0, minimum size=1.5mm] (a2) at ($(c2) + (108:0.5*\s)$) {};
        \node[circle, fill=black, inner sep=0, minimum size=1.5mm] (b2) at ($(a2) + (36:0.5*\s)$) {};
        \node[circle, fill=black, inner sep=0, minimum size=1.5mm] (d2) at ($(c2) + (0.5*\s,0)$) {};
        \node[circle, fill=black, inner sep=0, minimum size=1.5mm] (e2) at ($(d2) + (72:0.5*\s)$) {};
        
        \node[circle, fill=black, inner sep=0, minimum size=1.5mm] (e3) at ($(c0) + (-144:\s)$) {};
        \node[circle, fill=black, inner sep=0, minimum size=1.5mm] (d3) at ($(e3) + (-108:0.5*\s)$) {};
        \node[circle, fill=black, inner sep=0, minimum size=1.5mm] (b3) at ($(e3) + (144:0.5*\s)$) {};
        \node[circle, fill=black, inner sep=0, minimum size=1.5mm] (c3) at ($(d3) + (-0.5*\s,0)$) {};
        \node[circle, fill=black, inner sep=0, minimum size=1.5mm] (a3) at ($(c3) + (108:0.5*\s)$) {};
        
        \node[circle, fill=black, inner sep=0, minimum size=1.5mm] (a4) at ($(d0) + (-36:\s)$) {};
        \node[circle, fill=black, inner sep=0, minimum size=1.5mm] (b4) at ($(a4) + (36:0.5*\s)$) {};
        \node[circle, fill=black, inner sep=0, minimum size=1.5mm] (c4) at ($(a4) + (-72:0.5*\s)$) {};
        \node[circle, fill=black, inner sep=0, minimum size=1.5mm] (d4) at ($(c4) + (0.5*\s,0)$) {};
        \node[circle, fill=black, inner sep=0, minimum size=1.5mm] (e4) at ($(d4) + (72:0.5*\s)$) {};
        
        \node[circle, fill=black, inner sep=0, minimum size=1.5mm] (c5) at ($(e0) + (36:\s)$) {};
        \node[circle, fill=black, inner sep=0, minimum size=1.5mm] (a5) at ($(c5) + (108:0.5*\s)$) {};
        \node[circle, fill=black, inner sep=0, minimum size=1.5mm] (b5) at ($(a5) + (36:0.5*\s)$) {};
        \node[circle, fill=black, inner sep=0, minimum size=1.5mm] (d5) at ($(c5) + (0.5*\s,0)$) {};
        \node[circle, fill=black, inner sep=0, minimum size=1.5mm] (e5) at ($(d5) + (72:0.5*\s)$) {};
        
        \draw[mid] (a0) -- (b0) node[midway, xshift=-0.3cm, yshift=0.3cm] {\Large $F_e$};
        \draw[mid] (b0) -- (e0) node[midway, xshift=0.3cm, yshift=0.3cm] {\Large $F_f^\prime$};
        \draw[mid] (e0) -- (d0) node[midway, xshift=0.3cm, yshift=-0.3cm] {\Large $F_{e'}^{\prime\prime}$};
        \draw[mid] (a0) -- (c0) node[midway, xshift=-0.3cm, yshift=-0.3cm] {\Large $F_f$};
        \draw[mid] (c0) -- (d0) node[midway, yshift=-0.4cm] {\Large $F_e^\prime$};
        
        \draw (a1) -- (b1)
                   -- (e1) 
                   -- (d1)
                   -- (c1)
                   -- (a1);
         \draw (a1) -- (d1) node[midway,xshift=0.25cm,yshift=0.15cm] {\small $f$};
         \draw    (a1) -- (e1) node[midway, yshift=0.15cm] {\small $e$};
        
        \draw (a2) -- (b2)
                   -- (e2)
                   -- (d2)
                   -- (c2)
                   -- (a2);
        \draw (d2) -- (a2) node[midway,xshift=0.1cm,yshift=0.2cm] {\small $f$};
         \draw  (d2) -- (b2) node[midway,xshift=0.25cm] {\small $e^\prime$};
        
        \draw (a3) -- (b3)
                   -- (e3)
                   -- (d3)
                   -- (c3)
                   -- (a3);
        \draw  (e3) -- (c3) node[midway, xshift=-0.2cm,yshift=0.1cm] {\small $f^\prime$};
        \draw  (e3) -- (a3) node[midway, yshift=0.2cm] {\small $e$};
        
        \draw (a4) -- (b4)
                   -- (e4)
                   -- (d4)
                   -- (c4)
                   -- (a4);
        \draw (c4) -- (e4) node[midway, xshift=-0.2cm, yshift=0.1cm] {\small $f^\prime$};
         \draw     (c4) -- (b4) node[midway, xshift=-0.2cm, yshift=0.1cm] {\small $h$};             
        
        \draw (a5) -- (b5)
                   -- (e5)
                   -- (d5)
                   -- (c5)
                   -- (a5);
         \draw  (b5) -- (c5) node[midway, xshift=-0.2cm] {\small $h$};
          \draw (b5) -- (d5) node[midway, xshift=0.2cm] {\small $e^\prime$};
        
    \end{tikzpicture}}
\end{figure}

\end{itemize}
\end{itemize} 
\end{defn}

\begin{theo}\label{Ptolemy c sc}\cite[Corollary~V.1.1.2]{PennerBook} The Ptolemy-complex $\Pcal(\Sigma,\delta, M)$ is connected and simply connected. 
\end{theo}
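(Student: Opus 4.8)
The plan is to realize $\Pcal(\Sigma,\delta,M)$ as the cellular structure dual to the codimension-$\le 2$ strata of Penner's cell decomposition of decorated Teichmüller space, and to transport connectivity and simple connectivity from that contractible space. First I would recall that, in $\lambda$-length coordinates, the decorated Teichmüller space $\widetilde{T}(\Sigma)$ of $(\Sigma,\delta,M)$ is homeomorphic to a Euclidean space, hence contractible; in particular it is connected and simply connected. Penner's convex-hull construction then assigns to each decorated hyperbolic structure a canonical ideal cell decomposition of $\Sigma$, which partitions $\widetilde{T}(\Sigma)$ into open cells $C(\Delta)$ indexed by ideal cell decompositions $\Delta$, with $\dim C(\Delta)$ equal to the number of non-boundary arcs of $\Delta$. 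The top cells are exactly the ideal triangulations, the codimension-one cells are the decompositions having a single quadrilateral face (a triangulation with one interior edge deleted), and crossing such a wall realizes exactly a flip $F_f$.

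For connectivity I would proceed as follows. Given two triangulations $\Tsf$ and $\Tsf'$, choose points in the interiors of the corresponding top cells and join them by a path; such a path exists since $\widetilde{T}(\Sigma)$ is connected, and by general position it may be perturbed to meet only codimension-one strata, each transversally. Reading off the walls crossed yields a finite sequence of flips from $\Tsf$ to $\Tsf'$, so the $1$-skeleton of $\Pcal$ is connected.

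For simple connectivity I would take an edge-loop $\gamma$ in $\Pcal$, view it as a loop in $\widetilde{T}(\Sigma)$ threading the relevant top cells and walls, and use simple connectivity of $\widetilde{T}(\Sigma)$ to fill it with a disk. After a general-position homotopy the disk meets only codimension-$\le 2$ strata, so $\gamma$ decomposes as a product of small loops, one encircling each codimension-two cell met. The classification of codimension-two strata is the combinatorial core: such a stratum corresponds either to deleting two interior edges with disjoint endpoints, giving two independent quadrilateral faces whose link is the commuting-flip square \textbf{P2}, or to deleting two edges sharing exactly one endpoint, giving a pentagonal face whose link is the Ptolemy pentagon \textbf{P3}; meanwhile \textbf{P1} encodes that a flip and its reverse flip are mutually inverse, i.e. that the two directed crossings of a single codimension-one wall cancel, which handles backtracking when filling $\gamma$. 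In each case the link of the stratum in the dual complex is precisely the boundary of the corresponding $2$-cell of $\Pcal$, so each small loop bounds and $\gamma$ is null-homotopic.

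The main obstacle is exactly this codimension-two analysis: one must verify that Penner's decomposition admits no codimension-two strata beyond the two types above, and that the dual link of each is literally the boundary of \textbf{P2} or \textbf{P3}, while also justifying the general-position reductions that identify "crossing a wall" with "performing a flip" and "encircling a codimension-two cell" with "traversing a $2$-cell relation." Granting Penner's cell decomposition together with this local structure, connectivity and simple connectivity of $\Pcal(\Sigma,\delta,M)$ then follow formally.
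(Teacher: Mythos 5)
Your proposal is correct and follows essentially the same route as the paper: the paper gives no independent proof but cites Penner's Corollary~V.1.1.2, and its remark immediately after the theorem describes precisely your strategy --- ideal triangulations are the top cells of Penner's cell decomposition of the contractible decorated Teichm\"uller space $\Tcal(\Sigma,\delta,M)$, and connectivity and simple connectivity are transported to the dual complex by general position against the codimension-$\le 2$ strata. The one point to state carefully in your codimension-two analysis is that the square-versus-pentagon dichotomy is governed by whether the two removed edges span a common triangle (pentagon) or not (commuting square), not literally by endpoint-sharing --- an imprecision already present in the paper's formulation of \textbf{P2}/\textbf{P3}.
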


Connectedness is a classical result and can be proven by purely combinatorial methods. However, the proof that $\Pcal(\Sigma,\delta, M)$ is simply connected uses a fair amount of Teichm\"uller theory. The crucial point is that ideal triangulations, or fat-graphs, give a cell decomposition of the decorated Teichm\"uller space $\Tcal(\Sigma,\delta,M)$, which is a contractible space. In \cite[Chapter~5, Definition~1.1]{PennerBook} the \textit{Ptolemy-groupoid} is defined as the path groupoid of $\Tcal(\Sigma,\delta,M)$, thus it is the fundamental groupoid of the Ptolemy-complex.

So far we discussed the classical situation of just a surface with ideal triangulations. However, in this paper, we need a Ptolemy complex which also accounts for $G$-bundles over the surface. Thus, we introduce the notion of a $G$-Ptolemy-complex and show, that it is still connected and simply connected. In spirit this is a discrete realization of the holonomy functor for the $G$-bundle determined by $\zeta:\Sigma\rightarrow BG$.
  
\begin{defn} A \textit{$G$-labeled ideal triangulation} on $(\Sigma,\delta,M)$ is an isotopy class of an embedded oriented graph $\Tsf\hookrightarrow \Sigma$ together with a map $g:E^{or}(\Tsf)\rightarrow G$, such that
\begin{enumerate}[label=\roman*)]
\item the underlying graph of $\Tsf$ obtained by forgetting the orientation
is an ideal triangulation of $(\Sigma,\delta,M)$.
\item the map $g$ satisfies $g(\ebf)=g(\ov{\ebf})^{-1}$. 
\item if oriented edges $\ebf_1$, $\ebf_2$, $\ebf_3$ form the counterclockwise oriented boundary of a triangle of $\Tsf$ the following relation holds in $G$
\eq{\label{cyclic face}
g\lb \ebf_1\rb g\lb \ebf_2\rb g\lb \ebf_3\rb =e\quad .
}
\end{enumerate}
\end{defn}

The map $g$ is defined using oriented edges. Due to the condition in ii) giving the map on one orientation of the edge uniquely fixes the map on the edge with opposite orientation. 

\begin{defn} Given an edge $\fbf$ of a $G$-triangulation $\Tsf$, which is adjacent to two different $2$-cells the \textit{$G$-flip along $\fbf$} is given by the transformation shown in figure \ref{G move}.
\begin{figure}
\setlength{\tabcolsep}{1cm}

\centering
\begin{tabular}{ m{2cm} >{\centering\arraybackslash} m{1cm} m{2cm}}

\begin{tikzpicture}
\coordinate (a) at (0,0);
\coordinate (b) at (2,-2);
\coordinate (c) at (2,2);
\coordinate (d) at (4,0);
\coordinate (e) at (0,2);
\coordinate (f) at (0,-2);
\coordinate (g) at (1.4,0);
\coordinate (h) at (2.6,0);
\coordinate (i) at (4,2);
\coordinate (j) at (4,-2);

\draw[->-] (a) to node [midway, below]{$g_1$} (b);
\draw[->-] (a) to node [midway, above left]{$g_4$} (c);
\draw[->-] (b) to node [midway, below right]{$g_2$}(d);
\draw[->-] (c) to node [midway, above right]{$g_3$}(d);
\draw[->-](c) to node [midway, right]{$g$} (b);
\draw[->-](c) to node [midway, left]{$\fbf$} (b);

\fill (a) circle (2pt);
\fill (b) circle (2pt);
\fill (c) circle (2pt);
\fill (d) circle (2pt);
\end{tikzpicture}
 & $\phantom{---}\longrightarrow$ &

\begin{tikzpicture}
\coordinate (a) at (0,0);
\coordinate (b) at (2,-2);
\coordinate (c) at (2,2);
\coordinate (d) at (4,0);
\coordinate (e) at (0,2);
\coordinate (f) at (0,-2);
\coordinate (g) at (2,1);
\coordinate (h) at (2,-1);
\coordinate (i) at (4,2);
\coordinate (j) at (4,-2);

\draw[->-] (a) to node [midway, below]{$g_1$} (b);
\draw[->-] (a) to node [midway, above left]{$g_4$} (c);
\draw[->-] (b)to node [midway, below right]{$g_2$} (d);
\draw[->-] (c) to node [midway, above right]{$g_3$} (d);
\draw[->-] (a)to node [midway, above]{$g^\prime$}(d);
\draw[->-] (a)to node [midway, below]{$\fbf^\prime$}(d);

\fill (a) circle (2pt);
\fill (b) circle (2pt);
\fill (c) circle (2pt);
\fill (d) circle (2pt);
\end{tikzpicture}
\end{tabular}
\caption{Due to the cyclicity condition \eqref{cyclic face} the $G$-labels $g$ and $g^\prime$ are uniquely fixed by the $G$-labels $g_1$, $g_2$, $g_3$, $g_4$.}
\label{G move}
\end{figure}
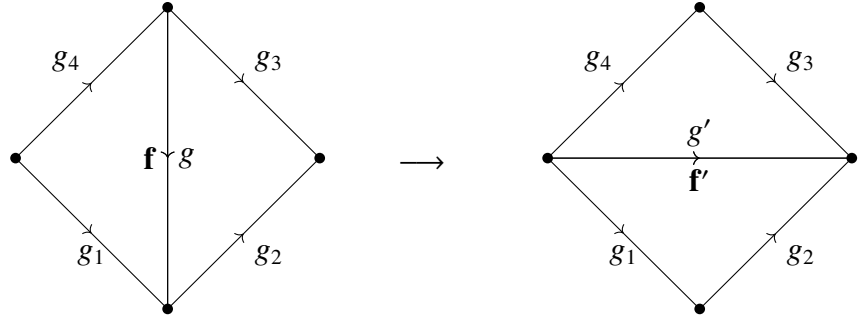
\end{defn}

\begin{rem} The edge $\fbf$ at which a $G$-flip is performed cannot be a boundary edge. 
\end{rem}

$G$-triangulations are a combinatorial tool to describe $1$-morphisms in the bicategory $\GBordpzc_\star(3,2,1)$. In $\GBordpzc_\star(3,2,1)$ all three layers of data are equipped with a map to $BG$. Equivalently one can regard $\GBordpzc_\star(3,2,1)$ as the bordism $2$-category of bordisms together with the datum of a $G$-principal bundle. To make this precise, recall that the \textit{mapping groupoid $\Pi(M,N)$}, for topological spaces $M$,$N$, has as objects continuous maps $M\xrightarrow{f}N$ and morphisms $f\rightarrow g$ are homotopy classes relative to $M\times \lbr 0,1\rbr $ of homotopies $f\sim_h g$. For $N=BG$ the classifying space of $G$, there is a canonical equivalence of groupoids 
\eq{
\Pi(M,BG)\simeq \PBunpzc_G(M)\, ,
} 
where $\PBunpzc_G(M)$ is the groupoid of $G$-principal bundles on $M$. By choosing a basepoint $\bullet\in M$ and restricting objects to pointed maps from $M$ to $BG$ but keeping as morphisms equivalence classes of unpointed homotopies, one gets an equivalent groupoid $\Pi_\star(M,BG)$. So from the pointed map $f:(M,\bullet)\rightarrow (BG,\star)$ we get that objects of $\GBordpzc_\star(3,2,1)$ can be described as unions of circles with fixed $G$-principal bundles. The underlying manifold of a connected object $((M,\bullet),f)$ is diffeomorphic to a circle $S^1$ and its classifying map determines a homomorphism $f_\ast:\pi_1(S^1,\bullet)\simeq \Zbb \rightarrow G\simeq \pi_1(BG,\star)$, i.e. an element $f_\ast(1)\eqqcolon g$ in $G$. In this sense, we can identify objects of $\GBordpzc_\star(M,BG)$ with finitely
many circles which are labeled by a group element.

Let $(\Sigma,\zeta):((M_0,\bullet),f_0)\rightarrow ((M_1,\bullet),f_1)$. The surface $\Sigma$ itself is not pointed, however, if $\ast\in M_0$ is a basepoint of a connected component, it holds $\zeta(\iota_0(\ast))=f_0(\ast)=\star$. Thus, the images of the basepoints in the boundary of $\Sigma$ all get mapped to the basepoint in $BG$ by $\zeta$. 

To the data of a $1$-morphism $(\Sigma,\zeta):((M_0,m_0),f_0)\rightarrow ((M_1,m_1),f_1)$ we want to associate a $G$-triangulation of $\Sigma$. We begin by choosing a finite set of points $K\subset \Sigma\backslash \p \Sigma$, which for $\Sigma$ homeomorphic to a disk needs to be non-empty. Let $\delta\coloneqq \iota_0(m_0)\cup \iota_1(m_1)\in \p \Sigma$ be the image of the basepoints and $\Tsf$ an ideal triangulation based at $\delta\cup K$. Let $b\subset \p \Sigma$ be a connected component of the boundary. Assume that the circle which is mapped to $b$ is labeled by $g\in G$. By construction $\zeta_\ast(b)\in \pi_1(BG,\star)$ is the loop corresponding to $g$. Thus, $G$-labels of boundary edges in $\Tsf$ are uniquely fixed by source and target objects of the $1$-morphism. Since all images of basepoints in $\Sigma$ get mapped to $\star$, the image of a non-boundary edge $\ebf\in E^{or}(\Tsf)$ with both endpoints in $\delta$ is an oriented loop in $\pi_1(BG,\star)$, hence determines a group element $g_\ebf$. The $G$-color of all other edges is determined up to certain \textit{gauge transformations}. Let $p=(\ebf_1,\cdots, \ebf_r)$ be an oriented edge path in $\Tsf$ with endpoints in $\delta$. The path $p$ gets mapped to a loop in $BG$, based at $\star$. Hence there exists $g_p\in G$, such that $\mathrm{Im}(\zeta|_p)\simeq g_p$ holds. We can assume that only the first and last edge of the path have endpoints in $\delta$. To each edge $\ebf_i\in p$ we assign a group element $g_i$, such that
\eq{\label{edge path relation}
g_1g_2\cdots g_r=g_p\quad. 
} 
This coloring is not unique. We can change $g_i\mapsto g_ih$ and $g_{i+1}\mapsto h^{-1}g_{i+1}$ and the new color still satisfies \eqref{edge path relation}. These are so called \textit{gauge transformations}. A specific labeling of the oriented edges of $\Tsf$ with group elements is \textit{compatible} with $\zeta$, if \eqref{edge path relation} holds for all possible edge paths with endpoints in $\delta$. The set of all $G$-colorings of $\Tsf$ which are compatible with $\zeta$ is denoted by $\mathrm{Col}_G(\Tsf)$. A gauge transformation then is a map
\eq{
\lambda:M\rightarrow \mathrm{Maps}\lb G,\Endrm\lb\mathrm{Col}_G(\Tsf)\rb \rb,
}
where for $p\in M$ and $g\in G$, $\lambda_g(p)$ acts as 

\begin{figure}[H]
    \def\l{3}
    \makebox[\textwidth][c]{
    \resizebox{0.7\width}{!}{
    \begin{tikzpicture}
        \node[circle, 
              fill=black, 
              inner sep=0, 
              minimum size=2.5mm, 
              label={[label distance=0.25cm]0:\Large $p$}] 
              (p) at (0,0) {};
        \draw[mid={<}{0.5}] (p) -- ++(30:\l) node[above left] {\Large $g_1$};
        \draw[mid={>}{0.5}] (p) -- ++(90:\l) node[below right] {\Large $g_2$};
        \draw[mid={>}{0.5}] (p) -- ++(150:\l) node[above right] {\Large $g_3$};
        \draw[mid={<}{0.5}] (p) -- ++(210:\l) node[below right] {\Large $g_4$};
        \draw[mid={>}{0.5}] (p) -- ++(270:\l) node[above right] {\Large $g_5$};
        \draw[mid={<}{0.5}] (p) -- ++(330:\l) node[below left] {\Large $g_6$};
        
        \node[circle, 
              fill=black, 
              inner sep=0, 
              minimum size=2.5mm, 
              label={[label distance=0.25cm]0:\Large $p$}]
              (q) at (2.5*\l,0) {};
        \draw[mid={<}{0.5}] (q) -- ++(30:\l) node[above] {\Large $g_1 g^{-1}$};
        \draw[mid={>}{0.5}] (q) -- ++(90:\l) node[below right] {\Large $g g_2$};
        \draw[mid={>}{0.5}] (q) -- ++(150:\l) node[above] {\Large $g g_3$};
        \draw[mid={<}{0.5}] (q) -- ++(210:\l) node[below] {\Large $g_4 g^{-1}$};
        \draw[mid={>}{0.5}] (q) -- ++(270:\l) node[above right] {\Large $g g_5$};
        \draw[mid={<}{0.5}] (q) -- ++(330:\l) node[below] {\Large $g_6 g^{-1}$};
        
        \draw[{|[scale=1.5]}-{>[scale=1.5]}] ($(p) + (\l,0)$) -- ($(q) + (-\l,0)$) node[midway, yshift=0.5cm] {\Large $\lambda_g(p)$};
        
    \end{tikzpicture}}}
\end{figure}

Multiplication in $G$ endows the set of gauge transformations with a group structure such that the gauge group acts transitively on $\mathrm{Col}_G(\Tsf)$. To summarize, for any $1$-morphism $(\Sigma,\zeta)$ in $\GBordpzc_\star(3,2,1)$ and any ideal triangulation $\Tsf$ of $\Sigma$, based at the images of the basepoints and a possibly empty set $M$, we get a $G$-triangulation, unique up to gauge transformation.

\begin{defn}\label{Ptolemy} Let $(\Sigma,\zeta):(M_0,f_0)\rightarrow (M_1,f_0)$ be $1$-morphism in $\GBordpzc_\star(3,2,1)$ and $M\subset \Sigma\backslash \p \Sigma$ a finite set of points. The \textit{$G$-equivariant Ptolemy groupoid $\Pcal^G(\Sigma,\zeta,M)$} is a $2$ dimensional CW-complex defined as follows.
\begin{itemize}
\item[\textbf{0-cells:}] Vertices of $\Pcal^G(\Sigma,\zeta, M)$ are (ideal) $G$-triangulations induced by $\zeta$.
\item[\textbf{1-cells:}] There is an edge $\Tsf\rightarrow \Tsf^\prime$ in $\Pcal^G(\Sigma,\zeta,M)$ for any $G$-flip and any gauge transformation.
\item[\textbf{2-cells:}] Since the $G$-label for $G$-flips is uniquely determined, we lift the relations \textbf{P1-P3} of the Ptolemy-groupoid to $\Pcal^G(\Sigma,\zeta,M)$. This yields three relations \textbf{GP1}, \textbf{GP2} and \textbf{GP3}. In addition there is a mixed relation as well as two pure gauge relations. 
\begin{itemize}
\item[]\textbf{GP4:} For $\ebf$ an edge of a G-triangulation and $v$ a vertex incident to $\ebf$, the flip along  $\ebf$ and a gauge transformation at $v$ commute. That is, there is a quadrilateral $2$-cell
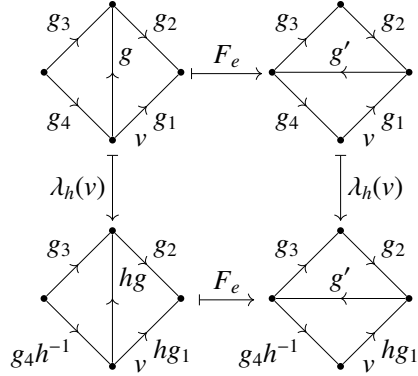
\begin{figure}[H]
    \def\l{5}
    \def\s{1}
    \makebox[\textwidth][c]{
    \resizebox{0.6\width}{!}{
    \begin{tikzpicture}
        
        \node at (0,0) (a) {
        \resizebox{\s\width}{!}{
        \begin{tikzpicture}
            \node[circle, fill=black, inner sep=0, minimum size=1.5mm] (a1) at (0,0) {};
            \node[circle, fill=black, inner sep=0, minimum size=1.5mm] (a2) at ($(a1) + (1.5,1.5)$) {};
            \node[circle, fill=black, inner sep=0, minimum size=1.5mm] (a3) at ($(a2) + (1.5,-1.5)$) {};
            \node[circle, fill=black, inner sep=0, minimum size=1.5mm, label={[label distance=0.25cm]0:\Large $v$}] (a4) at ($(a3) + (-1.5,-1.5)$) {};
            \draw[mid] (a1) -- (a2) node[anchor=south east, midway] {\Large $g_3$};
            \draw[mid] (a1) -- (a4) node[anchor=north east, midway] {\Large $g_4$};
            \draw[mid] (a2) -- (a3) node[anchor=south west, midway] {\Large $g_2$};
            \draw[mid] (a4) -- (a3) node[anchor=north west, midway] {\Large $g_1$};
            \draw[mid] (a4) -- (a2) node[anchor=south west, midway] {\Large $g$};
        \end{tikzpicture}}
        };
        
        \node at (\l,0) (b) {
        \resizebox{\s\width}{!}{
        \begin{tikzpicture}
            \node[circle, fill=black, inner sep=0, minimum size=1.5mm] (b1) at (0,0) {};
            \node[circle, fill=black, inner sep=0, minimum size=1.5mm] (b2) at ($(b1) + (1.5,1.5)$) {};
            \node[circle, fill=black, inner sep=0, minimum size=1.5mm] (b3) at ($(b2) + (1.5,-1.5)$) {};
            \node[circle, fill=black, inner sep=0, minimum size=1.5mm, label={[label distance=0.25cm]0:\Large $v$}] (b4) at ($(b3) + (-1.5,-1.5)$) {};
            \draw[mid] (b1) -- (b2) node[anchor=south east, midway] {\Large $g_3$};
            \draw[mid] (b1) -- (b4) node[anchor=north east, midway] {\Large $g_4$};
            \draw[mid] (b2) -- (b3) node[anchor=south west, midway] {\Large $g_2$};
            \draw[mid] (b4) -- (b3) node[anchor=north west, midway] {\Large $g_1$};
            \draw[mid] (b3) -- (b1) node[anchor=south, midway] {\Large $g'$};
        \end{tikzpicture}}
        };

        \node at ($(\l,-\l) + (-0.25,0)$) (c) {
        \resizebox{\s\width}{!}{
        \begin{tikzpicture}
            \node[circle, fill=black, inner sep=0, minimum size=1.5mm] (c1) at (0,0) {};
            \node[circle, fill=black, inner sep=0, minimum size=1.5mm] (c2) at ($(c1) + (1.5,1.5)$) {};
            \node[circle, fill=black, inner sep=0, minimum size=1.5mm] (c3) at ($(c2) + (1.5,-1.5)$) {};
            \node[circle, fill=black, inner sep=0, minimum size=1.5mm, label={[label distance=0.25cm]0:\Large $v$}] (c4) at ($(c3) + (-1.5,-1.5)$) {};
            \draw[mid] (c1) -- (c2) node[anchor=south east, midway] {\Large $g_3$};
            \draw[mid] (c1) -- (c4) node[anchor=north east, midway] {\Large $g_4 h^{-1}$};
            \draw[mid] (c2) -- (c3) node[anchor=south west, midway] {\Large $g_2$};
            \draw[mid] (c4) -- (c3) node[anchor=north west, midway] {\Large $h g_1$};
            \draw[mid] (c3) -- (c1) node[anchor=south, midway] {\Large $g'$};
        \end{tikzpicture}}

        };
        \node at ($(0,-\l) + (-0.25,0)$) (d) {
        \resizebox{\s\width}{!}{
        \begin{tikzpicture}
            \node[circle, fill=black, inner sep=0, minimum size=1.5mm] (d1) at (0,0) {};
            \node[circle, fill=black, inner sep=0, minimum size=1.5mm] (d2) at ($(d1) + (1.5,1.5)$) {};
            \node[circle, fill=black, inner sep=0, minimum size=1.5mm] (d3) at ($(d2) + (1.5,-1.5)$) {};
            \node[circle, fill=black, inner sep=0, minimum size=1.5mm, label={[label distance=0.25cm]0:\Large $v$}] (d4) at ($(d3) + (-1.5,-1.5)$) {};
            \draw[mid] (d1) -- (d2) node[anchor=south east, midway] {\Large $g_3$};
            \draw[mid] (d1) -- (d4) node[anchor=north east, midway] {\Large $g_4 h^{-1}$};
            \draw[mid] (d2) -- (d3) node[anchor=south west, midway] {\Large $g_2$};
            \draw[mid] (d4) -- (d3) node[anchor=north west, midway] {\Large $h g_1$};
            \draw[mid] (d4) -- (d2) node[anchor=south west, midway] {\Large $h g$};
        \end{tikzpicture}}
        };
        
        \draw[{|[scale=1.5]}-{>[scale=1.5]}] ($(a.east) + (0,0.075)$) -- ($(b.west) + (0.1,0.075)$) node[anchor=south, midway] {\Large $F_e$};
        \draw[{|[scale=1.5]}-{>[scale=1.5]}] ($(a.south) + (0.05,0.1)$) -- ($(d.north) + (0.3,0)$) node[anchor=east, midway] {\Large $\lambda_h(v)$};
        \draw[{|[scale=1.5]}-{>[scale=1.5]}] ($(d.east) + (-0.125,0.075)$) -- ($(c.west) + (0.725,0.075)$) node[anchor=south, midway] {\Large $F_e$};
        \draw[{|[scale=1.5]}-{>[scale=1.5]}] ($(b.south) + (0.05,0.1)$) -- ($(c.north) + (0.3,0)$) node[anchor=west, midway] {\Large $\lambda_h(v)$};
        
    \end{tikzpicture}}}
    \caption{Flip commutes with gauge transformation}
\end{figure}

\item[]\textbf{GP5:} Gauge transformations at different vertices commute, i.e. for $v\neq u$ vertices of a $G$-triangulation, there are quadrilateral $2$-cells corresponding to $\lambda(v)\lambda(u)=\lambda(u)\lambda(v)$. 
\item[]\textbf{GP6:} Finally we add triangular $2$-cells $\lambda_g(v)\cdot\lambda_h(v)=\lambda_{gh}(v)$, implementing the gauge-action.
\begin{figure}[H]
    \def\l{3}
    \makebox[\textwidth][c]{
    \resizebox{0.6 \width}{!}{
    \begin{tikzpicture}
        \node[circle, 
              fill=black, 
              inner sep=0, 
              minimum size=2.5mm, 
              label={[label distance=0.25cm]0:\Large $v$}] 
              (p) at (0,0) {};
        \draw[mid={<}{0.5}] (p) -- ++(30:\l) node[above left] {\Large $g_1$};
        \draw[mid={>}{0.5}] (p) -- ++(90:\l) node[below right] {\Large $g_2$};
        \draw[mid={>}{0.5}] (p) -- ++(150:\l) node[above right] {\Large $g_3$};
        \draw[mid={<}{0.5}] (p) -- ++(210:\l) node[below right] {\Large $g_4$};
        \draw[mid={>}{0.5}] (p) -- ++(270:\l) node[above right] {\Large $g_5$};
        \draw[mid={<}{0.5}] (p) -- ++(330:\l) node[below left] {\Large $g_6$};
        
        \node[circle, 
              fill=black, 
              inner sep=0, 
              minimum size=2.5mm, 
              label={[label distance=0.25cm]0:\Large $v$}]
              (q) at (2.5*\l,0) {};
        \draw[mid={<}{0.5}] (q) -- ++(30:\l) node[above] {\Large $g_1 g^{-1}$};
        \draw[mid={>}{0.5}] (q) -- ++(90:\l) node[below right] {\Large $g g_2$};
        \draw[mid={>}{0.5}] (q) -- ++(150:\l) node[above] {\Large $g g_3$};
        \draw[mid={<}{0.5}] (q) -- ++(210:\l) node[below] {\Large $g_4 g^{-1}$};
        \draw[mid={>}{0.5}] (q) -- ++(270:\l) node[above right] {\Large $g g_5$};
        \draw[mid={<}{0.5}] (q) -- ++(330:\l) node[below] {\Large $g_6 g^{-1}$};
        
        \node[circle, 
              fill=black, 
              inner sep=0, 
              minimum size=2.5mm, 
              label={[label distance=0.25cm]0:\Large $v$}]
              (r) at (2.5*\l,-3*\l) {};
        \draw[mid={<}{0.5}] (r) -- ++(30:\l) node[above] {\Large $g_1 g^{-1} h^{-1}$};
        \draw[mid={>}{0.5}] (r) -- ++(90:\l) node[below right] {\Large $h g g_2$};
        \draw[mid={>}{0.5}] (r) -- ++(150:\l) node[above] {\Large $h g g_3$};
        \draw[mid={<}{0.5}] (r) -- ++(210:\l) node[below] {\Large $g_4 g^{-1} h^{-1}$};
        \draw[mid={>}{0.5}] (r) -- ++(270:\l) node[above right] {\Large $h g g_5$};
        \draw[mid={<}{0.5}] (r) -- ++(330:\l) node[below] {\Large $g_6 g^{-1} h^{-1}$};

        \draw[{|[scale=1.5]}-{>[scale=1.5]}] ($(p) + (\l,0)$) -- ($(q) + (-\l,0)$) node[midway, yshift=0.5cm] {\Large $\lambda_g(v)$};
        \draw[{|[scale=1.5]}-{>[scale=1.5]}] ($(q) + (0,-1.25*\l)$) -- ($(r) + (0,1.25*\l)$) node[anchor=west, midway] {\Large $\lambda_h(v)$};
        
        \draw[{|[scale=1.5]}-{To[scale=1.5]}] ($(p) + (0,-1.25*\l)$) .. controls ++(0,-\l) and ++(-\l,0) .. ($(r) + (-\l,0)$) node[anchor=north east, midway] {\Large $\lambda_{hg}(v)$};
                
    \end{tikzpicture}}}
\end{figure}

\end{itemize}
\end{itemize} 
\end{defn}

\begin{theo}\label{G scc} The $G$-equivariant Ptolemy groupoid $\Pcal^G(\Sigma,\zeta, M)$ from Definition \ref{Ptolemy} is connected and simply connected. 
\end{theo}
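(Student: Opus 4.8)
The plan is to exploit the forgetful projection $\pi:\Pcal^G(\Sigma,\zeta,M)\to \Pcal(\Sigma,\delta,M)$ that sends a $G$-triangulation to its underlying ideal triangulation, a $G$-flip to the corresponding flip, and a gauge transformation to the constant path; on $2$-cells it sends \textbf{GP1}--\textbf{GP3} to \textbf{P1}--\textbf{P3} and collapses \textbf{GP4}--\textbf{GP6} to degenerate cells. The decisive structural feature is that a $G$-flip has a \emph{unique} lift once the incoming $G$-labels are fixed, because the new edge label is forced by the cyclicity condition \eqref{cyclic face}; moreover a flip preserves all holonomies and hence sends $\zeta$-compatible colorings to $\zeta$-compatible colorings. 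Thus $\pi$ behaves like a discrete fibration in the flip direction, and I would organize the whole proof around lifting data along $\pi$, using that the base is connected and simply connected by Theorem \ref{Ptolemy c sc}.

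For connectedness, given two $G$-triangulations $\Tsf_0,\Tsf_1$, I first connect their images $\pi(\Tsf_0),\pi(\Tsf_1)$ in $\Pcal(\Sigma,\delta,M)$ by a finite sequence of flips. Lifting this sequence to $G$-flips starting at $\Tsf_0$ yields, by the uniqueness and holonomy-preservation just noted, a well-defined path of $G$-flips ending at some $\Tsf_1'$ with $\pi(\Tsf_1')=\pi(\Tsf_1)$, so that $\Tsf_1'$ and $\Tsf_1$ both lie in $\mathrm{Col}_G(\pi(\Tsf_1))$. Since the gauge group acts transitively on $\mathrm{Col}_G(\pi(\Tsf_1))$, a further sequence of gauge transformations joins $\Tsf_1'$ to $\Tsf_1$, proving connectedness.

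For simple connectivity, take a loop $\ell$ in the $1$-skeleton based at $\Tsf$, written as a word in $G$-flips and gauge transformations. Using \textbf{GP4} (the case of a gauge transformation at a non-incident vertex being immediate, since the two moves affect disjoint edges) I would homotope $\ell$ to a product $\ell_F\cdot \ell_\Lambda$, where $\ell_F$ consists only of $G$-flips and $\ell_\Lambda$ only of gauge transformations; this rearrangement does not change the projection, so $\pi(\ell_F)=\pi(\ell)$ is a loop in $\Pcal(\Sigma,\delta,M)$. By Theorem \ref{Ptolemy c sc} this projected loop bounds a disk tiled by \textbf{P1}--\textbf{P3} cells. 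Lifting each tile to its counterpart among \textbf{GP1}--\textbf{GP3} and using uniqueness of flip-lifts, the tiles assemble into a disk in $\Pcal^G(\Sigma,\zeta,M)$; since the monodromy around each \textbf{P}-cell is trivial (the lifted flips around a \textbf{GP}-cell close up by construction), the monodromy around $\pi(\ell_F)$ is trivial. Hence $\ell_F$ is already a loop at $\Tsf$ and is contractible, so $\ell\simeq \ell_\Lambda$ is a pure gauge loop based at $\Tsf$.

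It remains to contract $\ell_\Lambda$. Using \textbf{GP6} to fuse gauge transformations at a common vertex and \textbf{GP5} to commute those at distinct vertices, I would bring $\ell_\Lambda$ to the normal form $\prod_{v\in M}\lambda_{g_v}(v)$. Because $\ell_\Lambda$ returns to $\Tsf$, this product acts trivially on the coloring; reading off its action edge by edge gives $g_w=g(\ebf)^{-1}\,g_u\,g(\ebf)$ for every oriented edge $\ebf$ from $u$ to $w$. Since gauge transformations are indexed only by the interior points $M$ while the boundary vertices in $\delta$ carry no gauge, and since $\Sigma$ has nonempty boundary with every interior vertex joined to $\partial\Sigma$ through the connected triangulation, propagating this relation from a boundary vertex (where the gauge element is $e$) forces $g_v=e$ for all $v\in M$. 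Thus each factor equals $\lambda_e(v)=\Idrm$ and $\ell_\Lambda$ is contractible via \textbf{GP5}--\textbf{GP6}, completing the argument. The main obstacle is precisely this last step: one must rule out a would-be nontrivial ``global'' gauge loop, and it is exactly the presence of ungauged boundary vertices that does so; the secondary point requiring care is the bookkeeping showing that the lifted \textbf{GP}-tiles close up, i.e.\ that the monodromy around each \textbf{P}-cell is trivial.
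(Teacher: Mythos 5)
Your proof is correct, and it rests on the same structural ingredients as the paper's: the forgetful projection $\pi:\Pcal^G(\Sigma,\zeta,M)\to\Pcal(\Sigma,\delta,M)$, Theorem \ref{Ptolemy c sc} for the base, unique lifting of flips, transitivity of the gauge action on the fibers, and the cells \textbf{GP1}--\textbf{GP6}. The difference is in packaging: the paper does not run the lifting argument by hand, but quotes the fibration criterion of Bakalov--Kirillov (Theorem \ref{BakalovKirillov}, i.e.\ \cite[Proposition~6.2]{bakalov2000lego}) and verifies its five hypotheses --- surjectivity and edge lifting, simple connectivity of the base, connectivity and simple connectivity of a fiber (via gauge transitivity together with \textbf{GP5}/\textbf{GP6}, the gauge moves at a single vertex spanning the $2$-skeleton of a $(|G|-1)$-simplex), the mixed squares via \textbf{GP4}, and the lifting of $2$-cells via \textbf{GP1}--\textbf{GP3}. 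Your three steps (splitting a loop into a flip word followed by a gauge word using \textbf{GP4}; contracting the flip word by a monodromy/covering-space argument over the simply connected base, using that boundaries of \textbf{P}-cells lift to closed \textbf{GP}-cell boundaries; contracting the residual gauge loop) are precisely what the proof of that criterion does internally, so you have in effect reproved the criterion in this special case. Two points are worth recording. First, your contraction of the pure gauge loop is genuinely different from the paper's fiber argument: instead of the simplex observation, you bring the loop to the normal form $\prod_{v\in M}\lambda_{g_v}(v)$ via \textbf{GP5}/\textbf{GP6} and then show the gauge action on colorings is \emph{free}, by propagating the relation $g_w=g(\ebf)^{-1}g_u\,g(\ebf)$ inward from the ungauged boundary vertices; this is sharper than what the paper writes, and it makes explicit where the hypothesis $r>0$ (hence $\delta\neq\emptyset$ and a connected $1$-skeleton meeting $\p\Sigma$) enters. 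Second, a caveat you share with the paper: both your commutation step and the paper's check of condition (iv) require a flip to commute, up to a $2$-cell, with a gauge transformation at an \emph{arbitrary} vertex, whereas \textbf{GP4} in Definition \ref{Ptolemy} only provides such cells for vertices incident to the flipped edge; calling the non-incident case ``immediate'' tacitly assumes these extra (harmless) cells are present, which is exactly the assumption the paper also makes.
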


We defer the proof of Theorem \ref{G scc} to the appendix, since it doesn't offer any deeper insight for the main results of this paper.

\section{Bare String-Net Spaces and Cylinder Categories}\label{section 5}

\subsection{A vector space for surfaces}

We have set the stage for constructing a $G$-equivariant string-net space for $1$-morphisms. The main idea is to start with an ideal $G$-triangulation $\Tsf$ on $(\Sigma,\zeta)$ and define a string-net space relative to it by only allowing graphs transversal to $\Tsf$. We proceed by subsequently taking quotients implementing the rules of the graphical calculus for $\Ccal$ inside disks having different relative positions to $\Tsf$. This will yield string-net spaces $\SNrm^\Ccal_\Tsf(\Sigma,\zeta)$ for any $G$-triangulation $\Tsf$. In order to get a space which solely depends on $(\Sigma,\zeta)$, we define for any edge of $\Pcal^G(\Sigma,\zeta)$ an isomorphism between string-nets spaces. These maps will satisfy the relations \textbf{GP1-6} and we get a projective system of vector spaces. Taking the limit of the system will ultimately give the string-net space. 

At first sight, this might appear as an overly cumbersome way of defining a string-net space on a $G$-surface. In the usual string-net approach one just considers all embedded $\Dcal$-colored graphs, for $\Dcal$ any spherical fusion category. However, there seems to be no way of deciding whether an arbitrary $\Ccal$-colored, embedded graph on $\Sigma$ is compatible with the $G$-structure on $\Sigma$. This appears to be only possible if the graph is fine enough, meaning that it is at least isotopic to the $1$-skeleton of a CW-decomposition of $\Sigma$. In other words, in the equivariant case string-net graphs need to be sensitive to the global topology of $\Sigma$. Instead of arbitrary CW-decompositions we work with ideal triangulations, because there we have full control over the combinatorics involved. Without further ado we now spell out the details of the construction.

We choose an arbitrary finite, but fixed set of points $M\subset \Sigma\backslash \p \Sigma$. Note that $M$ can be empty except for the disk. 
\begin{defn}
Let $\Gamma$ be an arbitrary embedded finite oriented graph in $\Sigma$ and $\Ccal$ a $G$-graded fusion category. A \textit{$\Ccal$-coloring of $\Gamma$} comprises two functions. First
\eq{
c:E^{\orrm}(\Gamma)&\rightarrow \Ccal^{\mathrm{hom}}_0
}
assigns to each oriented edge $(e,\mathrm{or})$ an homogeneous object of $\Ccal$, such that $c(\ebf)=c(\ov{\ebf})^\ast$. The second function is a map
\eq{
\phi:V(\Gamma)&\rightarrow \Ccal_1 \\
v&\mapsto \phi_v\in  \Ccal\left(\bigotimes_{\ebf\in H(v)}c(\ebf)^{\epsilon_\ebf}\right),
}
where $\epsilon_\ebf=1$ if $\ebf$ is oriented away from $v$ and $\epsilon_\ebf=-1$ if it is oriented towards $v$. A negative exponent for an object in $c\in \Ccal$ indicates its dual.
\end{defn}
In particular via the grading map $p:\Ccal^{\mathrm{hom}}_0\rightarrow G$ we get a $G$-labeling $g_\Gamma$ of $\Gamma$ satisfying $g_\Gamma(\ov{\ebf})=g_\Gamma(\ebf)^{-1}$. The graph and its $G$-labeling should be compatible with a $G$-triangulation in a sense we are about to define (cf. Definition \ref{G compatible}).

\begin{defn}\label{G-transversal} Let $\Tsf\in \Pcal^G(\Sigma,\zeta,M)$. An embedded graph $\Gamma\hookrightarrow \Sigma$ is \textit{totally transversal} to $\Tsf$ if $V(\Tsf)\cap \Gamma=\Tsf\cap V(\Gamma)=\emptyset$ and any edge of $\Tsf$ that is not on the boundary intersects at least one edge of $\Gamma$. Furthermore, all intersections are transversal.
\end{defn}

The obvious example of a graph transversal to an ideal triangulation is its dual trivalent fat graph. Let $\Gamma$ be a $\Ccal$-colored graph, which is totally transversal to the underlying ideal triangulation of a $G$-triangulation $\Tsf$. Pick a representative in the isotopy class for $\Tsf$. For an edge $\ebf\in E^{\mathrm{or}}(\Tsf)$ consider the edges $\lbr \fbf_1,\cdots , \fbf_n\rbr $ of $\Gamma$ intersecting with $\ebf$, as in figure \ref{G compat}. The edge $\fbf_i$ has $G$-color $g_i$. The orientation of $\ebf$ gives a linear order on the set of intersecting edges. The transversal intersection can be positive or negative, depending on the orientation of the $\fbf_i$, e.g. the intersection $\ebf\cap \fbf_1$ is positive, whereas $\ebf\cap \fbf_{n-1}$ is negative in figure \ref{G compat}. The linear order allows us to multiply the $G$-labels of the the $\fbf_i$'s taking orientations into account. For the graph in figure \ref{G compat} this gives 
\eq{
G\ni m_{\ebf}=g_1\cdots g_{n-1}^{-1}g_n\quad. 
} 
It is easy to check that $m_\ebf$ is well defined, i.e. independent of the chosen representative in the isotopy class for $\Tsf$.
\begin{defn}\label{G compatible} Let $\Gamma$ be a $\Ccal$-colored graph which is totally transversal to the underlying ideal triangulation $G$-triangulation $\Tsf$. Then $\Gamma$ is \textit{$G$-transversal} to $\Tsf$ if 
\eq{
m_{\ebf}=g(\ebf)
}
holds for any edge of $\Tsf$.
\end{defn}
If the $G$-triangulation is clear from the context we sometimes just say that an embedded graph is $G$-transversal. 

We are working on surfaces with boundary, therefore string-net spaces will depend on a choice of boundary condition, or boundary value.

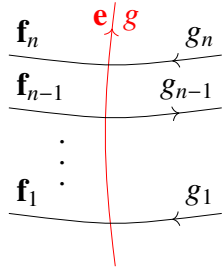
\begin{figure}
    \centering
    \makebox[\textwidth][c]{
    \resizebox{0.7 \width}{!}{
    \begin{tikzpicture}
        \draw[mid={>}{0.9}, red] (3,0) .. controls ++(-0.25,2) and ++(-0.25,-2) .. (3,5) node[anchor=west, below right] {\Large $g$} node[anchor=east, below left] {\Large $\mathbf{e}$};
        \draw[mid={<}{0.8}] (1,1) node[anchor=south, above right] {\Large $\mathbf{f}_1$} .. controls ++(2,-0.25) and ++(-2,-0.25) .. (5,1)  node[anchor=south, above left] {\Large $g_1$};
        \draw[mid={>}{0.8}] (1,3) node[anchor=south, above right] {\Large $\mathbf{f}_{n-1}$} .. controls ++(2,-0.25) and ++(-2,-0.25) .. (5,3) node[anchor=south, above left] {\Large $g_{n-1}$};
        \draw[mid={<}{0.8}] (1,4) node[anchor=south, above right] {\Large $\mathbf{f}_n$} .. controls ++(2,-0.25) and ++(-2,-0.25) .. (5,4) node[anchor=south, above left] {\Large $g_n$};
        \node at (2,1.9) {\Large $\cdot$};
        \node at (2,1.5) {\Large $\cdot$};
        \node at (2,2.3) {\Large $\cdot$};
    \end{tikzpicture}
    }
    }
    \caption{In the color version, the orange line represents an edge of $\Tsf$ with $G$-label $g$. The black lines are edges $(\Gamma,c)$}
    \label{G compat}
\end{figure}
 
\begin{defn} 
\begin{enumerate}[label=\arabic*)]
\item A \textit{boundary value} for a surface $(\Sigma,\zeta)$ is a disjoint union of finitely many points $B\in \p\Sigma$ together with a map $\Bbf:B\rightarrow \Ccal_0^{hom}$.
\item The \textit{boundary value} of a $G$-transversal graph $\Gamma$ on $\Sigma$ is defined as the disjoint union of intersection points $B_\Gamma$ of the graph with $\p \Sigma$, together with the map $\Bbf:B_\Gamma\rightarrow \Ccal_0^{hom}$, mapping an intersection point to the $\Ccal$-color of its corresponding edge.
\end{enumerate}
\end{defn}

\begin{rem} Note that one can state the definition for a boundary value in case of surfaces without the map $\zeta$ in the exact same fashion. $\zeta$ enters the definition only in so far, as restricting the possible boundary values, as $G$-labels of edges have to compatible with $\zeta$.
\end{rem}
  
\begin{defn} Let $\VGraphrm^\Ccal_{G\Tsf}(\Sigma,\zeta,M;\Bbf)$ be the $\Kbb$-vector space freely generated by all graphs which are $G$-transversal to $\Tsf$ with boundary value $\Bbf$.
\end{defn}

\begin{rem}The vector space $\VGraphrm^\Ccal_{G\Tsf}(\Sigma,\zeta,M;\Bbf)$ is huge, e.g. isotopic graphs with the same $\Ccal$-coloring are considered as different graphs so far. Isotopy invariance will follow only after requiring the graphical calculus for $\Ccal$ to hold locally on disks.
\end{rem}

In a first step, we consider embedded closed disks $D\hookrightarrow\Sigma\backslash \Tsf$. Given a $G$-transversal graph $\Gamma$ the boundary of the disk $D$ is required to intersect edges of $\Gamma$ transversally and mustn't intersect $\Gamma$ at vertices of $\Gamma$. As usual we get a cyclic set of objects $\lbr c_i\rbr$ from the $\Ccal$-color of the edges intersecting $\p D$ and a linear evaluation map \cite[Theorem~2.3]{kirillov2011string}
\eq{
\la \bullet\ra_D:\VGraphrm^\Ccal_{G\Tsf}\lb\Sigma,\zeta,M;\Bbf\rb\rightarrow \Ccal\lb\bigotimes_i c_i^{\epsilon_i}\rb\quad.
}
which is defined on vectors meeting the transversality requirements with
respect to $D$. The sign conventions are obvious from figure \ref{Eval disk}.

\begin{figure}
    \centering
    \makebox[\textwidth][c]{
    \resizebox{0.7 \width}{!}{
    \begin{tikzpicture}
        \node[circle, draw, inner sep=5mm] (a) at (0,0) {\Large $\gamma$};
        \draw[cyan, mid={>}{0}] (0,0) circle (2.5cm) node[anchor=east, xshift=-2.6cm] {\Large $\partial D$};
        \draw[mid] (a.south east) .. controls ++(0.5,-0.5) and ++(-0.5,0.5) .. ++(1,-2) node[anchor=south west, midway] {\Large $c_1$};
        \draw[mid] (a.north east) .. controls ++(0.5,0.5) and ++(0,-1) .. ++(1,2) node[anchor=north west, midway] {\Large $c_2$};
        \draw[mid={<}{0.5}] (a.north) .. controls ++(0.1,0.25) and ++(0.25,-0.25) .. ++(0,2) node[anchor=south west, midway] {\Large $c_3$};
        \draw[mid={>}{0.5}] (a.north west) .. controls ++(-0.1,0.25) and ++(0,-0.5) .. ++(-1,2) node[anchor=north east, midway] {\Large $c_4$};
        \draw[mid={<}{0.5}] (a.south west) .. controls ++(-0.1,0.25) and ++(0,0.5) .. ++(-1,-2) node[anchor=south east, midway] {\Large $c_5$};
        
        \draw[{|[scale=1.5]}-{>[scale=1.5]}] ($(a) + (0,-3)$) -- ++(0,-1) node[anchor=west, midway] {\Large $\langle \cdot \rangle_D$};
        
        \node[anchor=north] at ($(a) + (0,-4.5)$) {\Large $\displaystyle{\langle \gamma \rangle_D \in Hom_{\Ccal}\left(\onebb, c_1 \otimes c_2 \otimes c_3^* \otimes c_4 \otimes c_5^*\right)}$};
        
    \end{tikzpicture}
    }
    }
    \caption{$\gamma$ is the subgraph inside $D$ of a $G$-transversal graph $\Gamma$.}
    \label{Eval disk}
\end{figure}

\begin{defn} Let $D\hookrightarrow \Sigma\backslash \Tsf$ be an embedded disk. An element $\Gamma\coloneqq \sum_i x_i\Gamma_i\in \VGraphrm^\Ccal_{G\Tsf}(\Sigma,\zeta,M;\Tsf)$ is \textit{null with respect to $D$} if 
\begin{itemize}
\item all $\Gamma_i$'s are transversal to $D$.
\item $\Gamma_i|_{\Sigma\backslash D}=\Gamma_j|_{\Sigma\backslash D}$ for all $i,j$
\item $\la \Gamma\ra_D=0$.
\end{itemize}

The \textit{vector space of $\Tsf$-disk null graphs $\NGraphrm^\Ccal_{G\Tsf}(\Sigma,\zeta,M;\Bbf)$} is the subspace of $\VGraphrm^\Ccal_{G\Tsf}(\Sigma,\zeta,M;\Bbf)$ spanned by all null graphs for all possible disks $D\hookrightarrow \Sigma\backslash \Tsf$.
\end{defn}

The quotient of $\VGraphrm^\Ccal_{G\Tsf}(\Sigma,\zeta,M;\Bbf)$ by the vector space of $\Tsf$-disk null graphs $\NGraphrm^\Ccal_{G\Tsf}(\Sigma,\zeta,M;\Bbf)$ is denoted
\eq{
\SNrm_{\Tsf,\Ccal}^{\prime\prime}(\Sigma,M;\Bbf)\coloneqq \frac{\VGraphrm^\Ccal_{G\Tsf}(\Sigma,\zeta,M;\Bbf)}{\NGraphrm^\Ccal_{G\Tsf}(\Sigma,\zeta,M;\Bbf)}\quad .
}
\begin{rem} \label{remark cyclic} Due to the $G$-grading of $\Ccal$, non-zero vectors in $\SNrm_{\Tsf,\Ccal}^{\prime\prime}(\Sigma,M;\Bbf)$ satisfy a cyclicity condition at all of their vertices. That is, given $v\in V(\Gamma)$, the $\Ccal$-color of its incident edges $\lbr \ebf_i\rbr =E^{\mathrm{or}}(v)$ has to satisfy 
\eq{\label{cyclic}
p\lb c\lb \ebf_1\rb^{\epsilon_1}\rb \cdots p\lb c \lb \ebf_n^{ \epsilon_n}\rb \rb=e\quad.
}
There is a sign convention involving the orientation of the incident edges, which can be easily deduced from figure \ref{cyclic figure} and is similar to the one used in figure \ref{Eval disk}.

\begin{figure}
    \def \l{3}
    \centering
    \makebox[\textwidth][c]{
    \resizebox{0.7 \width}{!}{
    \begin{tikzpicture}
        \node[circle, 
              fill=black, 
              inner sep=0, 
              minimum size=2.5mm,
              label={[label distance=-1cm]0:\Large $v$}]
              (p) at (0,0) {};

        \draw[mid={<}{0.5}] (p) -- ++(-72:\l) node[above right] {\Large $c(\boldsymbol{e_1})$};
        \draw[mid={<}{0.5}] (p) -- ++(0:\l) node[above left] {\Large $c(\boldsymbol{e_2})$};
        \draw[mid={<}{0.5}] (p) -- ++(72:\l) node[below right] {\Large $c(\boldsymbol{e_3})$};
        \draw[mid={>}{0.5}] (p) -- ++(144:\l) node[above right] {\Large $c(\boldsymbol{e_4})$};
        \draw[mid={>}{0.5}] (p) -- ++(216:\l) node[below right] {\Large $c(\boldsymbol{e_5})$};
        
        \draw[decoration={snake, segment length=5mm, post length=2mm, pre length=2mm}, decorate, -{>[scale=1.5]}] ($(p) + (0,-\l) + (0,-0.5)$) -- ++(0,-1.5);
        
        \node[anchor=north] (b) at ($(p) + (0,-\l) + (0,-2.5)$) {\Large $\displaystyle{p(c(\boldsymbol{e_1})^*) p(c(\boldsymbol{e_2})^*) p(c(\boldsymbol{e_3})^*) p(c(\boldsymbol{e_4})) p(c(\boldsymbol{e_5})) =e}$};
        
    \end{tikzpicture}
    }
    }
    \caption{Cyclicity condition for the color at a vertex.}
\label{cyclic figure}
\end{figure}
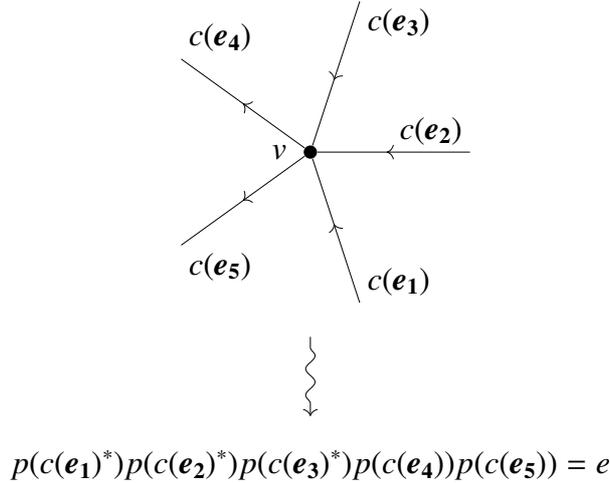

\end{rem}

The space $\SNrm_{\Tsf,\Ccal}^{\prime\prime}(\Sigma,M;\Bbf)$ only partly achieves the goal of globalizing the graphical calculus for $\Ccal$ to $\Sigma$. So far we have imposed local relations inside $2$-faces of $\Tsf$. In order to define local relations everywhere on $\Sigma$ we take a further quotient. This will allow for relations inside disks intersecting edges of the ideal triangulation $\Tsf$. We introduce an equivalence relation implementing isotoping a vertex of $\Gamma$ through an edge of $\Tsf$.
\begin{defn}\label{vertex quotient}
Let $\Gamma$, $\Gamma^\prime\in \SNrm^{\prime\prime}_{\Tsf,\Ccal}(\Sigma,M;\Bbf)$, then $\Gamma\sim_\fbf\Gamma^\prime$ if and only if the two graphs are related by the move shown in figure \ref{vertex move}.

\begin{figure}
    \centering
    \def \ls{6}
    \def \ll{1/3*\ls}
    \def \lc{1/6*\ls}
    \makebox[\textwidth][c]{
    \resizebox{0.7 \width}{!}{
    \begin{tikzpicture}
    
        \node[circle, 
              fill=black, 
              inner sep=0, 
              minimum size=2.5mm] 
              (a) at (0,0) {};
        
        \node[circle, 
              fill=black, 
              inner sep=0, 
              minimum size=2.5mm] 
              (b) at (\ls,0) {};
              
        \draw[mid={>}{0.75}] (a) .. controls ++(\lc,-\lc) and ++(-\lc,0) 
                                .. ++(\ll,-0.5*\ll) 
                                node[anchor=north] {\Large $g_1$};
        \draw[mid={>}{0.75}] (a) .. controls ++(\lc,\lc) and ++(-\lc,0) 
                                .. ++(\ll,0.5*\ll)
                                node[anchor=south] {\Large $g_m$};
        \draw[mid={>}{0.5}] (a) .. controls ++(-\lc,-\lc) and ++(\lc,0) 
                                .. ++(-\ll,-0.5*\ll)
                                node[anchor=north] {\Large $g_n$};
        \draw[mid={>}{0.5}] (a) .. controls ++(-\lc,\lc) and ++(\lc,0) 
                                .. ++(-\ll,0.5*\ll)
                                node[anchor=south] {\Large $g_{m+1}$};
        
        \node[anchor=west] at ($(a) + (0.25*\lc,0)$) {\Large $\cdot$};
        \node[anchor=west] at ($(a) + (0.25*\lc,2mm)$) {\Large $\cdot$};
        \node[anchor=west] at ($(a) + (0.25*\lc,-2mm)$) {\Large $\cdot$};
        \node[anchor=east] at ($(a) + (-0.25*\lc,0)$) {\Large $\cdot$};
        \node[anchor=east] at ($(a) + (-0.25*\lc,2mm)$) {\Large $\cdot$};
        \node[anchor=east] at ($(a) + (-0.25*\lc,-2mm)$) {\Large $\cdot$};
        \draw[mid={>}{0.5}, red] 
                ($(a) + (0.5*\ll, -\ll)$) node[anchor=west, black] {\Large $\fbf$}
                                          .. controls ++(-0.25*\lc, 0.25*\lc) and ++(-0.25*\lc, -0.25*\lc) 
                                          .. ($(a) + (0.5*\ll, \ll)$);
        
        \draw[mid={>}{0.75}] (b) .. controls ++(\lc,-\lc) and ++(-\lc,0) 
                                .. ++(\ll,-0.5*\ll) 
                                node[anchor=north] {\Large $g_1$};
        \draw[mid={>}{0.75}] (b) .. controls ++(\lc,\lc) and ++(-\lc,0) 
                                .. ++(\ll,0.5*\ll)
                                node[anchor=south] {\Large $g_m$};
        \draw[mid={>}{0.5}] (b) .. controls ++(-\lc,-\lc) and ++(\lc,0) 
                                .. ++(-\ll,-0.5*\ll)
                                node[anchor=north] {\Large $g_n$};
        \draw[mid={>}{0.5}] (b) .. controls ++(-\lc,\lc) and ++(\lc,0) 
                                .. ++(-\ll,0.5*\ll)
                                node[anchor=south] {\Large $g_{m+1}$};
        
        \node[anchor=west] at ($(b) + (0.25*\lc,0)$) {\Large $\cdot$};
        \node[anchor=west] at ($(b) + (0.25*\lc,2mm)$) {\Large $\cdot$};
        \node[anchor=west] at ($(b) + (0.25*\lc,-2mm)$) {\Large $\cdot$};
        \node[anchor=east] at ($(b) + (-0.25*\lc,0)$) {\Large $\cdot$};
        \node[anchor=east] at ($(b) + (-0.25*\lc,2mm)$) {\Large $\cdot$};
        \node[anchor=east] at ($(b) + (-0.25*\lc,-2mm)$) {\Large $\cdot$};
        \draw[mid={>}{0.5}, red] 
                ($(b) + (-0.5*\ll, -\ll)$) node[anchor=west, black] {\Large $\fbf$}
                                          .. controls ++(-0.25*\lc, 0.25*\lc) and ++(-0.25*\lc, -0.25*\lc) 
                                          .. ($(b) + (-0.5*\ll, \ll)$);
                                          
        \draw[decoration={snake, segment length=7.5mm, post length=0.5mm, pre length=0.5mm}, decorate, -{>[scale=1.5]}] ($(a) + (1.25*\ll,0)$) -- ($(b) + (-1.25*\ll,0)$);

    \end{tikzpicture}
    }
    }
\caption{The two string-nets $\Gamma$, $\Gamma^\prime$ agree outside of the neighborhood shown here. The move consists of isotoping the string-net vertex trough the edge $\fbf$ of $\Tsf$.}
\label{vertex move}
\end{figure}
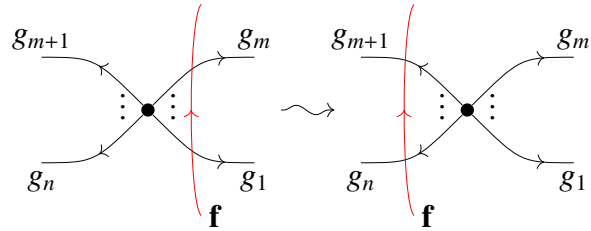

\end{defn}
Note that due to the cyclicity condition \eqref{cyclic}, the relation $\sim_\fbf$ is well defined. We take the quotient of $\SNrm_{\Tsf,\Ccal}^{\prime\prime}(\Sigma,M;\Bbf)$ by $\sim_\fbf$ for all edges $\fbf$ of $\Tsf$ and denote it 
\eq{
\SNrm^{\prime}_{\Tsf,\Ccal}(\Sigma,M;\Bbf)
}

In $\SNrm_{\Tsf,\Ccal}^{\prime}(\Sigma,M;\Bbf)$ local relations hold inside all disks not meeting the images of the marked points. As a consequence boundary points of string-nets cannot be moved.

Finally, the graphs still depend on the chosen $G$-triangulation $\Tsf$. We get rid of this dependence next. Let $\Delta$ be a $2$-face of $\Tsf$ and we take its counterclockwise oriented boundary $\lbr \ebf_1,\ebf_2,\ebf_3\rbr $ with $G$-color $\lbr g_1,g_2,g_3\rbr$. The $2$-face $\Delta$ may contain a boundary edge, which we assume to be $\ebf_1$. We denote $\Bbf_\Delta$ for the boundary value restricted to that boundary component. So for a $2$-face with boundary edge, we consider the vector space
\eq{
\hom_\Delta(\Ccal)\coloneqq  \bigoplus_{i\in I_{g_2},\, j\in I_{g_3}}\hom_\Ccal(\onebb,\Bbf_\Delta\otimes j\otimes k)\, .
}
For all other $2$-faces we set
\eq{
\hom_\Delta(\Ccal)\coloneqq\bigoplus_{i\in I_{g_1},j\in I_{g_2}, k\in I_{g_3}} \hom_\Ccal(\onebb,i\otimes j\otimes k)\, .
}
and define a vector space
\eq{
H_\Tsf^\Ccal(\Sigma;\Bbf)\coloneqq\bigotimes_{\Delta\in \Tsf^{[2]}}\hom_\Delta(\Ccal)\, ,
}
where $\Tsf^{[2]}$ denotes the set of 2-faces of $\Tsf$.
\begin{lem}\label{fat graph basis} There is an isomorphism
\eq{
\Psi:H_\Tsf^\Ccal(\Sigma;\Bbf)\rightarrow \SNrm^\prime_{\Tsf,\Ccal}(\Sigma,M;\Bbf)
}
which maps an element $\bigotimes_{f\in \pi_0(\Delta)}\phi_f\in H_\Tsf^\Ccal(\Sigma;\Bbf)$ to the equivalence class of the dual fat graph $\Gamma$ with boundary value $\Bbf_\Gamma=\Bbf$ and whose vertices are colored by the maps $\phi_f$.
\end{lem}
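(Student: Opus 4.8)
The plan is to show that $\Psi$ is a linear isomorphism by verifying that it is well defined, surjective, and injective, the last point via an explicit evaluation serving as a left inverse. The essential input is the statement already recorded in the text that in $\SNrm^\prime_{\Tsf,\Ccal}(\Sigma,M;\Bbf)$ the graphical calculus of $\Ccal$ holds inside every embedded disk disjoint from the marked points $\delta\cup M$.

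First I would check that $\Psi$ is well defined. The dual fat graph $\Gamma$ of $\Tsf$ places its unique trivalent vertex in the interior of each $2$-face $\Delta$ and meets each non-boundary edge of $\Tsf$ in a single transverse point, hence is totally transversal in the sense of Definition \ref{G-transversal}. Coloring the strand crossing an edge $\ebf_l$ of $\Delta$ by the simple object indexing the chosen summand of $\hom_\Delta(\Ccal)$ produces a $\Ccal$-coloring whose induced group labels satisfy $m_\ebf=g(\ebf)$ for every edge, so $\Gamma$ is $G$-transversal; the prescription for boundary $2$-faces makes its boundary value equal to $\Bbf$. Since the assignment is linear in each tensor factor $\phi_f$, it gives a well-defined linear map into $\SNrm^\prime_{\Tsf,\Ccal}(\Sigma,M;\Bbf)$.

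For surjectivity I would reduce an arbitrary $G$-transversal representative to dual fat graph form. The dual fat graph is a spine of the complement of the marked points, so using the graphical calculus in disks disjoint from $\delta\cup M$ I may isotope any $G$-transversal graph into a neighborhood of it; inside each $2$-face, treated as an embedded disk, the completeness relation of Figure \ref{completeness relation} fuses the strands crossing each of the three bounding edges into a single simply-colored strand, and the remaining enclosed subgraph is contracted by the disk relations to a single trivalent coupling. This rewrites the class as a dual fat graph colored by trivalent morphisms, exhibiting it in the image of $\Psi$. For injectivity I would construct a left inverse by disk evaluation: around the vertex of $\Gamma$ in each $2$-face $\Delta$ one applies $\la\,\cdot\,\ra_\Delta$ of Figure \ref{Eval disk}, returning an element of $\hom_\Delta(\Ccal)$, and collecting these over all faces recovers the factor $\phi_f$ placed there. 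That this evaluation descends to $\SNrm^\prime$ follows because $\la\,\cdot\,\ra_\Delta$ annihilates $\Tsf$-disk null graphs and, thanks to the cyclicity condition \eqref{cyclic}, is unaffected by the vertex move $\sim_\fbf$ of Definition \ref{vertex quotient}; one then checks that its composite with $\Psi$ is the identity of $H_\Tsf^\Ccal(\Sigma;\Bbf)$, using semisimplicity.

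The main obstacle is the reduction carried out for surjectivity: one must argue carefully that any $G$-transversal graph is isotopic, through graphs respecting $G$-transversality, into a neighborhood of the dual fat graph, and that the fusion of several crossing strands into one simple strand along each edge of $\Tsf$ is compatible with the group labeling forced by $\zeta$. This is the step where the interplay between the local graphical calculus and the global $G$-structure must be controlled, and it is precisely here that the hypothesis that local relations hold on all disks avoiding the marked points is indispensable.
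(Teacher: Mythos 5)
Your proposal is correct and follows essentially the same route as the paper, which does not write out the argument but defers to the proof of \cite[Lemma~5.3]{kirillov2011string}; your two-step scheme (reduction of an arbitrary $G$-transversal graph to dual-fat-graph standard form via completeness and the vertex move, then an evaluation map serving as left inverse) is precisely that proof adapted to the $G$-graded setting, including the key observation that fusing the strands crossing an edge $\ebf$ of $\Tsf$ produces only simple objects in $I_{g(\ebf)}$. The one point to tighten when writing it up is that the left inverse should be defined on arbitrary $G$-transversal representatives (by first inserting the simple-channel decomposition along every edge of $\Tsf$ and then evaluating each face), rather than only on fat-graph representatives, so that its compatibility with the disk relations and with $\sim_\fbf$ can be verified at the level of generators of $\VGraphrm^\Ccal_{G\Tsf}(\Sigma,\zeta,M;\Bbf)$.
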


Among other things Lemma \ref{fat graph basis} tells, that $\SNrm^\prime_{\Tsf,\Ccal}(\Sigma,M;\Bbf)$ has a basis in terms of equivalence classes of fat graphs with edges meeting $\partial\Sigma$ which are dual to $\Tsf$. Internal edges of the basis elements are colored with simple objects and its vertices are colored with basis elements in the corresponding hom-spaces $H_\Tsf^\Ccal(\Sigma;\Bbf)$ via the map $\Psi$. We call this the \textit{fat graph basis for $\Tsf$}. The proof of Lemma \ref{fat graph basis} is very similar to the proof of \cite[Lemma~5.3]{kirillov2011string} and we leave it to the reader to adapt it to the present situation. 

We almost arrived at a sensible definition of a string-net space. A remaining issue is that edges of a string-net cannot be moved through the marked points $M$. This issue can be approached by cloaking marked points, which is defined by projecting to a subspace of $\SNrm^\prime_{\Tsf,\Ccal}(\Sigma;\Bbf)$. Let 
\eq{
\Pi_g:\SNrm^\prime_{\Tsf,\Ccal}(\Sigma,M;\Bbf)\rightarrow \SNrm^\prime_{\Tsf,\Ccal}(\Sigma,M;\Bbf)
}
be the map acting in a neighborhood of $M$ by figure \ref{projector definition}.

\begin{figure}
    \centering
    \def \w{1}
    \def \h{2}
    \makebox[\textwidth][c]{
    \resizebox{0.7 \width}{!}{
    \begin{tikzpicture}
        \node[circle, 
              fill=red, 
              inner sep=0, 
              minimum size=2.5mm,
              label={[red, label distance=0.25cm]90:\Large $s$}]
              (a) at (0,0) {};
        \node (ll) at ($(a)-(1,0)$) {\Large $\Pi_g$:};
        \draw[red] ($(a) + (-\w,-\h)$) 
                            .. controls ++(0,1) and ++(0,-1) 
                            .. ($(a) + (\w,\h)$);
        \draw[red] ($(a) + (\w,-\h)$) 
                            .. controls ++(0,1) and ++(0,-1) 
                            .. ($(a) + (-\w,\h)$);

        \node[circle, 
              fill=red, 
              inner sep=0, 
              minimum size=2.5mm,
              label={[red, label distance=0.25cm]90:\Large $s$}]
              (b) at (4*\w,0) {};
        \draw[red] ($(b) + (-\w,-\h)$) 
                            .. controls ++(0,1) and ++(0,-1) 
                            .. ($(b) + (\w,\h)$);
        \draw[red] ($(b) + (\w,-\h)$) 
                            .. controls ++(0,1) and ++(0,-1) 
                            .. ($(b) + (-\w,\h)$);
        \draw (b) circle (1cm) node[anchor=west, xshift=1cm] {\Large $g$};
        
        \draw[{|[scale=1.5]}-{>[scale=1.5]}] ($(a) + (1.5*\w,0)$) -- ($(b) + (-1.5*\w,0)$);
        
        \node[anchor=west] (eq) at ($(b) + (1.5*\w,0)$) {\Large $\coloneqq \displaystyle{\sum_{i\in I_g} \frac{d_i}{D}}$};
        \node[circle, 
              fill=red, 
              inner sep=0, 
              minimum size=2.5mm,
              label={[red, label distance=0.25cm]90:\Large $s$}]
              (c) at ($(eq.east) + (1.5*\w,0)$) {};
        \draw[red] ($(c) + (-\w,-\h)$) 
                            .. controls ++(0,1) and ++(0,-1) 
                            .. ($(c) + (\w,\h)$);
        \draw[red] ($(c) + (\w,-\h)$) 
                            .. controls ++(0,1) and ++(0,-1) 
                            .. ($(c) + (-\w,\h)$);
        \draw[mid] (c) circle (1cm) node[anchor=west, xshift=1cm] {\Large $i$};
    \end{tikzpicture}
    }
    }
    \caption{Action of the map $\Pi_g$ on a vertex $s$ of an ideal triangulation.}
    \label{projector definition}
\end{figure}
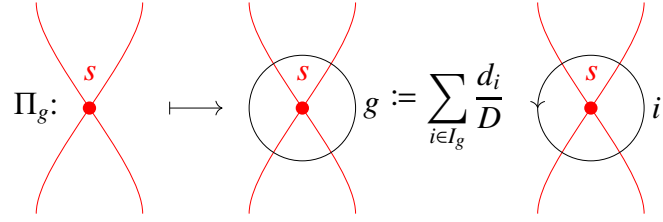
For the neutral element $e\in G$, we simply write $\Pi\coloneqq \Pi_e$ and depict it with a purple circle without group label

\begin{figure}[H]
    \centering
    \def \w{1}
    \def \h{2}
    \makebox[\textwidth][c]{
    \resizebox{0.7 \width}{!}{
    \begin{tikzpicture}
        \node[circle, 
              fill=red, 
              inner sep=0, 
              minimum size=2.5mm,
              label={[red, label distance=0.25cm]90:\Large $s$}]
              (a) at (0,0) {};
        \node (ll) at ($(a)-(1,0)$) {\Large $\Pi$:};
        \draw[red] ($(a) + (-\w,-\h)$) 
                            .. controls ++(0,1) and ++(0,-1) 
                            .. ($(a) + (\w,\h)$);
        \draw[red] ($(a) + (\w,-\h)$) 
                            .. controls ++(0,1) and ++(0,-1) 
                            .. ($(a) + (-\w,\h)$);

        \node[circle, 
              fill=red, 
              inner sep=0, 
              minimum size=2.5mm,
              label={[red, label distance=0.25cm]90:\Large $s$}]
              (b) at (4*\w,0) {};
        \draw[red] ($(b) + (-\w,-\h)$) 
                            .. controls ++(0,1) and ++(0,-1) 
                            .. ($(b) + (\w,\h)$);
        \draw[red] ($(b) + (\w,-\h)$) 
                            .. controls ++(0,1) and ++(0,-1) 
                            .. ($(b) + (-\w,\h)$);
        \draw[violet] (b) circle (1cm);
        
        \draw[{|[scale=1.5]}-{>[scale=1.5]}] ($(a) + (1.5*\w,0)$) -- ($(b) + (-1.5*\w,0)$);
        
        \node[anchor=west] (eq) at ($(b) + (1.5*\w,0)$) {\Large $\coloneqq \displaystyle{\sum_{i\in I_e} \frac{d_i}{D}}$};
        \node[circle, 
              fill=red, 
              inner sep=0, 
              minimum size=2.5mm,
              label={[red, label distance=0.25cm]90:\Large $s$}]
              (c) at ($(eq.east) + (1.5*\w,0)$) {};
        \draw[red] ($(c) + (-\w,-\h)$) 
                            .. controls ++(0,1) and ++(0,-1) 
                            .. ($(c) + (\w,\h)$);
        \draw[red] ($(c) + (\w,-\h)$) 
                            .. controls ++(0,1) and ++(0,-1) 
                            .. ($(c) + (-\w,\h)$);
        \draw[mid] (c) circle (1cm) node[anchor=west, xshift=1cm] {\Large $i$};
    \end{tikzpicture}
    }
    }
\end{figure}
It is easy to see, that $\Pi^2=\Pi$ and we set 
\eq{
\SNrm_\Tsf^\Ccal(\Sigma,M;\Bbf)\coloneqq \Imrm(\Pi)\, .
}

Using the completeness relation of figure \ref{completeness relation} the cloaking circle allows us to move edges of string-nets through marked points. If the edge has non-trivial $G$-color, though, e.g.\ $c\in \Ccal_g$ in figure \ref{edge through vertex}, the color of the cloaking circle might change 
\begin{figure}[H]
    \centering
    \def\w{1.5}
    \makebox[\textwidth][c]{
    \resizebox{0.7 \width}{!}{
    \begin{tikzpicture}
    \node[circle, 
              fill=red, 
              inner sep=0, 
              minimum size=2.5mm]
              (a) at (0,0) {};
    
    \draw[red] (a) .. controls ++(0.5,0.75) and ++(-0.75,-0.5)
                   .. ($(a) + (30:1.5*\w cm)$);
    \draw[red] (a) .. controls ++(-0.5,0.75) and ++(0.75,-0.5)
                   .. ($(a) + (150:1.5*\w cm)$);          
    \draw[red] (a) .. controls ++(0.5,-0.75) and ++(0.5,0.75)
                   .. ($(a) + (270:1.5*\w cm)$);
    \draw[violet] (a) circle (1cm);
    \draw[rounded corners, mid] ($(a) + (0,-1.25*\w)$) -- ($(a) + (0,-\w)$)
                                                  .. controls ++(1.75,1) and ++(1.75,-1)
                                                  .. ($(a) + (0,\w)$) 
                                                  node[anchor=south west, midway] {\Large $c$}
                                                  -- ($(a) + (0,1.25*\w)$);
    
    \node[anchor=west] (eq1) at ($(a) + (1.5*\w,0)$){\Large $\displaystyle{=\sum_{\substack{i \in I_e \\ j \in I_g}} \frac{d_i d_j}{D^2} \sum_{\alpha}}$};

    \node[circle, 
              fill=red, 
              inner sep=0, 
              minimum size=2.5mm]
              (b) at ($(eq1.east) + (1.5*\w,0)$) {};
    
    \draw[red] (b) .. controls ++(0.5,0.75) and ++(-0.75,-0.5)
                   .. ($(b) + (30:1.5*\w cm)$);
    \draw[red] (b) .. controls ++(-0.5,0.75) and ++(0.75,-0.5)
                   .. ($(b) + (150:1.5*\w cm)$);          
    \draw[red] (b) .. controls ++(0.5,-0.75) and ++(0.5,0.75)
                   .. ($(b) + (270:1.5*\w cm)$);
    
    \centerarc[mid](b)(-45:15:1cm);
    \centerarc[mid](b)(15:315:1cm);
    \node[anchor=west, xshift=1mm] at ($(b) + (-15:1)$) {\Large $j$};
    \node[anchor=east] at ($(b) + (180:1)$) {\Large $i$};
    \node[circle, draw,
              fill=white,
              inner sep=0.25mm, 
              minimum size=2.5mm] 
              (alph1) at ($(b) + (15:1)$) {\Large $\alpha$};
              
    \node[circle, draw,
              fill=white,
              inner sep=0.25mm, 
              minimum size=2.5mm] 
              (alph2) at ($(b) + (-45:1)$) {\Large $\alpha$};
    
    \draw
    [rounded corners, mid]
    ($(b) + (0,-1.25*\w)$) node[anchor=south east] {\Large $c$} 
                           -- ($(b) + (0,-\w)$) 
                           .. controls ++(0.1,0.1) and ++(0,-0.35)
                           .. (alph2);
    \draw
    [rounded corners, mid]
    (alph1) .. controls ++(0,0.75) and ++(0.1,-0.1)
            .. ($(b) + (0,\w)$)
            -- ($(b) + (0,1.25*\w)$)
            node[anchor=north east] {\Large $c$} ;
    
    \node[anchor=west] (eq2) at ($(eq1.west) + (0,-3*\w)$){\Large $\displaystyle{=\sum_{j \in I_g} \frac{d_j}{D^2}}$};

    \node[circle, 
              fill=red, 
              inner sep=0, 
              minimum size=2.5mm]
              (c) at ($(eq2.east) + (1.5*\w,0)$) {};
    
    \draw[red] (c) .. controls ++(0.5,0.75) and ++(-0.75,-0.5)
                   .. ($(c) + (30:1.5*\w cm)$);
    \draw[red] (c) .. controls ++(-0.5,0.75) and ++(0.75,-0.5)
                   .. ($(c) + (150:1.5*\w cm)$);          
    \draw[red] (c) .. controls ++(0.5,-0.75) and ++(0.5,0.75)
                   .. ($(c) + (270:1.5*\w cm)$);
    \draw[mid={>}{0}] (c) circle (1cm);
    \node[anchor=north west] at ($(c) + (1,0)$) {\Large $j$};
    
    \draw
    [rounded corners, mid] 
    ($(c) + (0,-1.25*\w)$) -- ($(c) + (0,-\w)$)
                           .. controls ++(-1.75,1) and ++(-1.75,-1)
                           .. ($(c) + (0,\w)$) 
                           node[anchor=south east, midway] {\Large $c$}
                           -- ($(c) + (0,1.25*\w)$);
                           
   \node[anchor=west] (eq3) at ($(eq2.west) + (0,-3*\w)$){\Large $=$};

    \node[circle, 
              fill=red, 
              inner sep=0, 
              minimum size=2.5mm]
              (d) at ($(eq3.east) + (1.5*\w,0)$) {};
    
    \draw[red] (d) .. controls ++(0.5,0.75) and ++(-0.75,-0.5)
                   .. ($(d) + (30:1.5*\w cm)$);
    \draw[red] (d) .. controls ++(-0.5,0.75) and ++(0.75,-0.5)
                   .. ($(d) + (150:1.5*\w cm)$);          
    \draw[red] (d) .. controls ++(0.5,-0.75) and ++(0.5,0.75)
                   .. ($(d) + (270:1.5*\w cm)$);
    \draw (d) circle (1cm);
    \node[anchor=north west] at ($(d) + (1,0)$) {\Large $g$};
    
    \draw
    [rounded corners, mid] 
    ($(d) + (0,-1.25*\w)$) -- ($(d) + (0,-\w)$)
                           .. controls ++(-1.75,1) and ++(-1.75,-1)
                           .. ($(d) + (0,\w)$) 
                           node[anchor=south east, midway] {\Large $c$}
                           -- ($(d) + (0,1.25*\w)$);    
    
    \end{tikzpicture}
    }
    }
    \caption{We can use completeness in $\Ccal$ to pull a string-net edge through a vertex of $\Tsf$. However, the color of the cloaking circle might change.}
    \label{edge through vertex}
\end{figure}
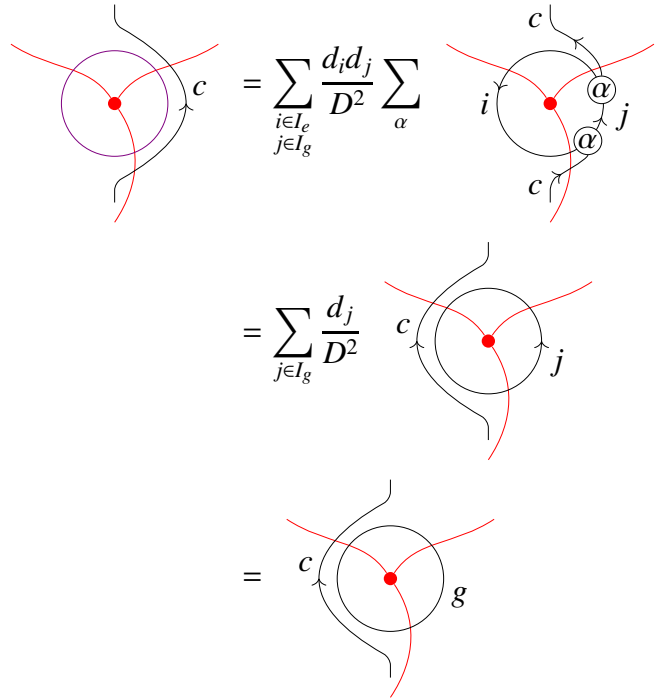

The resulting string-net is still in the image of $\Pi$, as can be easily checked using the completeness relation again. 

The vector space $\SNrm_T^\Ccal(\Sigma,M;\Bbf)$ clearly depends on the choice of a $G$-triangulation. Let $(\Tsf,g)\xrightarrow{F_f}(\Tsf^\prime,g^\prime) $ be a $G$-flip, where none of the edges of the two triangles is a boundary edge. We define the associated linear map 
\eq{
F_{\Tsf,\Tsf^\prime}:\SNrm^\Ccal_\Tsf(\Sigma,M;\Bbf) \rightarrow \SNrm^\Ccal_{\Tsf^\prime}(\Sigma,M;\Bbf) 
}
as the linear extension of the map defined on elements of the fat graph basis by
\begin{figure}[H]
    \centering
    \def\l{2}
    \makebox[\textwidth][c]{
    \resizebox{0.7 \width}{!}{
    \begin{tikzpicture}
        \node (LHS) at (0,0) {
            \begin{tikzpicture}
                
                \node[anchor=center, circle, fill=red, inner sep=0, minimum size=2mm] (A1) at (0,0) {};
                \node[anchor=center, circle, fill=red, inner sep=0, minimum size=2mm] (B1) at (-\l,\l) {};
                \node[anchor=center, circle, fill=red, inner sep=0, minimum size=2mm] (C1) at (\l,\l) {};
                \node[anchor=center, circle, fill=red, inner sep=0, minimum size=2mm] (D1) at (0,2*\l) {};
                
                \foreach \vertex in {A1, B1, C1, D1}{
                    \node[anchor=center,
                          circle,
                          violet,
                          draw,
                          inner sep=0,
                          minimum size=1cm]
                          at (\vertex) {};
                };
                
                \node[anchor=center,
                      fill=white,
                      circle, 
                      anchor=center, 
                      inner sep=0, 
                      minimum size=7mm, 
                      draw] 
                      (beta) at (barycentric cs:A1=0.5,B1=0.5,D1=0.5) {\Large $\beta$};
                
                \node[anchor=center,
                      fill=white,
                      circle, 
                      anchor=center, 
                      inner sep=0, 
                      minimum size=7mm, 
                      draw] 
                      (alpha) at (barycentric cs:A1=0.5,C1=0.5,D1=0.5) {\Large $\alpha$};
                
                \draw[red] (A1.center) 
                        -- (B1.center) node[anchor=center, midway] (AB1) {}
                        -- (D1.center) node[anchor=center, midway] (BD1) {}
                        -- cycle node[red, pos=0.75, anchor=west] {\Large $f$};
                \draw[red] (D1.center) 
                        -- (C1.center) node[anchor=center, midway] (CD1) {}
                        -- (A1.center) node[anchor=center, midway] (AC1) {};
                
                \node[anchor=center] at ($(AB1) + (-135:0.5*\l)$) (BL) {};
                \node[anchor=center] at ($(BD1) + (135:0.5*\l)$) (TL) {};
                \node[anchor=center] at ($(CD1) + (45:0.5*\l)$) (TR) {};
                \node[anchor=center] at ($(AC1) + (-45:0.5*\l)$) (BR) {};
                
                \draw[mid] (alpha) to[bend right] node[pos=0.5, anchor=south east] {\Large $s$} (beta);
                \draw[mid] (TL.center) node[anchor=north east] {\Large $k$} to[bend right=15] (beta);
                \draw[mid] (beta) to[bend right=15] (BL.center) node[anchor=south east] {\Large $l$};
                \draw[mid] (TR.center) node[anchor=north west] {\Large $i$} to[bend right=15] (alpha);
                \draw[mid] (BR.center) node[anchor=south west] {\Large $j$} to[bend right=15] (alpha);
            \end{tikzpicture}
        };
        \node[anchor=north] (res) at ($(LHS.south) + (0,-0.75*\l)$) {
            \begin{tikzpicture}
                \node[anchor=west] (sum) at ($(LHS.east) + (0.5*\l,0)$) {\LARGE $\displaystyle{\sum_{r,\gamma, \delta} F^{ijkl} \begin{bmatrix} \alpha & r & \beta \\ \gamma & s &\delta \end{bmatrix}}$};
                \node[anchor=west] (RHS) at (sum.east){
                    \begin{tikzpicture}
                        
                        \node[anchor=center, circle, fill=red, inner sep=0, minimum size=2mm] (A2) at (0,0) {};
                        \node[anchor=center, circle, fill=red, inner sep=0, minimum size=2mm] (B2) at (-\l,\l) {};
                        \node[anchor=center, circle, fill=red, inner sep=0, minimum size=2mm] (C2) at (\l,\l) {};
                        \node[anchor=center, circle, fill=red, inner sep=0, minimum size=2mm] (D2) at (0,2*\l) {};
                        
                        \foreach \vertex in {A2, B2, C2, D2}{
                            \node[anchor=center,
                                  circle,
                                  violet,
                                  draw,
                                  inner sep=0,
                                  minimum size=1cm]
                                  at (\vertex) {};
                        };
                        
                        \node[anchor=center,
                              fill=white,
                              circle, 
                              anchor=center, 
                              inner sep=0, 
                              minimum size=7mm, 
                              draw] 
                              (gamma) at (barycentric cs:C2=0.5,B2=0.5,D2=0.5) {\Large $\gamma$};
                        
                        \node[anchor=center,
                              fill=white,
                              circle, 
                              anchor=center, 
                              inner sep=0, 
                              minimum size=7mm, 
                              draw] 
                              (delta) at (barycentric cs:A2=0.5,C2=0.5,B2=0.5) {\Large $\delta$};
                        
                        \draw[red] (A2.center)
                                -- (B2.center) node[anchor=center, midway] (AB2) {}
                                -- (C2.center) node[red, pos=0.75, anchor=north] {\Large $f'$}
                                               node[anchor=center, midway] (BC2) {}
                                -- cycle node[anchor=center, midway] (AC2) {};
                        \draw[red] (B2.center) 
                                -- (D2.center) node[anchor=center, midway] (BD2) {}
                                -- (C2.center) node[anchor=center, midway] (CD2) {};
                        
                        \node[anchor=center] at ($(AB2) + (-135:0.5*\l)$) (DL) {};
                        \node[anchor=center] at ($(BD2) + (135:0.5*\l)$) (UL) {};
                        \node[anchor=center] at ($(CD2) + (45:0.5*\l)$) (UR) {};
                        \node[anchor=center] at ($(AC2) + (-45:0.5*\l)$) (DR) {};
                        
                        \draw[mid] (delta) to[bend right] node[pos=0.5, anchor=south west] {\Large $r$} (gamma);
                        \draw[mid] (UL.center) node[anchor=south west] {\Large $k$} to[bend right=15] (gamma);
                        \draw[mid] (delta) to[bend right=15] (DL.center) node[anchor=north west] {\Large $l$};
                        \draw[mid] (UR.center) node[anchor=south east] {\Large $i$} to[bend left=15] (gamma);
                        \draw[mid] (DR.center) node[anchor=north east] {\Large $j$} to[bend right=15] (delta);
                    \end{tikzpicture}
                };
            \end{tikzpicture}
        };
        \draw[{|[scale=1.5]}-{>[scale=1.5]}] ($(LHS.south) + (0,-0.25)$) -- ($(res.north) + (0,0.25)$);
    \end{tikzpicture}
    }
    }
\end{figure}

\noindent
where the map is the identity outside the open neighborhood of the edge $\fbf$ shown
in red in the picture. In case there is a boundary edge, we use first use the completeness relation to decompose the $\Ccal$-colored edges ending on
$\partial\Sigma$ into a sum of simple colored edges and then perform the same move as in the previous case, followed by the inverse of the completeness relation.

To a gauge transformation $\Tsf\xrightarrow{\lambda_g(v)}\Tsf^\prime$, we associate the map $G_g(v):\SNrm^\Ccal_\Tsf(\Sigma,M;\Bbf)\rightarrow \SNrm^\Ccal_{\Tsf^\prime}(\Sigma,M;\Bbf)$ adding an $I_g$-colored cloaking circle around the internal vertex $v$ of the $G$-triangulation $T$.

\begin{theo} There is a functor
\eq{
\SNrm^\Ccal:\Pi_1\lb \Pcal^G(\Sigma,\zeta)\rb &\rightarrow \Vectpzc_\Kbb\\
(\Tsf,g)&\mapsto \SNrm^\Ccal_\Tsf(\Sigma,M;\Bbf)\\
\left[\Tsf\xrightarrow{F_e}\Tsf^\prime\right] &\mapsto F_{\Tsf,\Tsf^\prime}\\
\left[\Tsf\xrightarrow{\lambda_g(v)}\Tsf^\prime\right]&\mapsto G_g(v)
}
\end{theo}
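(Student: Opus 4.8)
The plan is to exploit the combinatorial presentation of the fundamental groupoid of a CW-complex. Recall that a functor out of $\Pi_1(X)$ for a connected CW-complex $X$ is exactly the following data: (i) an object of the target for every $0$-cell, (ii) an \emph{isomorphism} for every oriented $1$-cell (the reverse orientation being sent to the inverse), and (iii) the condition that the composite of the assigned isomorphisms around the oriented boundary of every $2$-cell equals the identity. This is the edge-path presentation of the full subgroupoid of $\Pi_1(X)$ on the $0$-skeleton, which is equivalent to $\Pi_1(X)$ when $X$ is connected. By Theorem \ref{G scc} the complex $\Pcal^G(\Sigma,\zeta,M)$ is connected and simply connected, so there is no residual global monodromy to control and it suffices to check the three local conditions. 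Data (i) and (ii) are precisely the assignments in the statement, which are already well defined by construction, so I would first record that the edge-maps are isomorphisms: the inverse of a flip-map is the flip-map of the reverse flip (relation \textbf{GP1} below), and the inverse of $G_g(v)$ is $G_{g^{-1}}(v)$, since composing the two inserts an $I_e$-cloaking circle around the marked point $v$, which acts as the identity on $\Imrm(\Pi)=\SNrm^\Ccal_\Tsf(\Sigma,M;\Bbf)$. Thus the proof reduces to verifying the $2$-cell relations \textbf{GP1}--\textbf{GP6}.

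The relations \textbf{GP1}, \textbf{GP2}, \textbf{GP3} are inherited from the non-equivariant Ptolemy complex, because the $G$-label carried by a flip is uniquely determined (Figure \ref{G move}) and therefore imposes no constraint beyond the underlying $\Ccal$-recoupling. For \textbf{GP1} one uses that the two bases of $\Ccal(i,j,k,\ell)$ related by the $6j$-symbols are mutually inverse, so the $F$-matrix of a flip and that of its reverse compose to the identity. For \textbf{GP2}, two flips at edges with disjoint endpoints are supported in disjoint disks of the dual fat graph and commute by locality of the construction. For \textbf{GP3}, the alternating composite of five flips is precisely the pentagon identity for the $6j$-symbols recalled in Section 2. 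These three steps reproduce the functoriality argument of the ordinary string-net TQFT, cf. \cite{kirillov2011string}\cite{bartlett2022three}, and I would import them with only cosmetic changes.

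The genuinely new content lies in the gauge relations. Relation \textbf{GP5} is again pure locality: for $u\neq v$ the maps $G_g(u)$ and $G_h(v)$ insert cloaking circles around distinct vertices, whose supporting neighborhoods are disjoint. For \textbf{GP6} I would show that an $I_g$-circle encircled by an $I_h$-circle fuses into a single $I_{hg}$-circle: expanding both circles via the completeness relation of Figure \ref{completeness relation}, fusing the two encircling strands, and using that $i\otimes j\in\Ccal_{hg}$ for $i\in I_h$, $j\in I_g$ together with the identity $\sum_{i\in I_e}d_i^2=\sum_{U\in I_h}d_U^2$ recalled before Proposition \ref{new crossing}, reproduces exactly the normalisation $\sum_{k\in I_{hg}}d_k/D$ of the $I_{hg}$-cloaking circle, giving $G_h(v)\circ G_g(v)=G_{hg}(v)$.

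The main obstacle is the mixed relation \textbf{GP4}, which intertwines a flip with a gauge transformation at an incident vertex $v$ and is the one place where the categorical $6j$-data must be reconciled with the $G$-bundle data. My plan is to show that the $I_g$-cloaking circle around $v$ can be slid through the flip region without obstructing the $6j$-move. Concretely, a cloaking circle may be pulled through a string-net edge at the cost of a controlled change of its $G$-colour, as in Figure \ref{edge through vertex}; I would use this to isotope the circle off the support of the $F$-move, perform the recoupling in the now-unobstructed disk, and slide the circle back. The bookkeeping of $G$-colours incurred along this isotopy matches precisely the relabelling prescribed by the gauge transformation in the flipped triangulation (Figure \ref{G move}), which is why the square closes. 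Verifying that this sliding is compatible with the completeness and half-braiding structure underlying $\SNrm^\Ccal$ is the crux of the argument; once \textbf{GP4} is established, all $2$-cell relations hold and the assignment extends to the desired functor on $\Pi_1\lb\Pcal^G(\Sigma,\zeta)\rb$.
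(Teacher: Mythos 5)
Your proposal is correct and follows essentially the same route as the paper: verify that $F_{\Tsf,\Tsf^\prime}$ and $G_g(v)$ are isomorphisms (flip self-inverse, completeness relation for the gauge map) and then check the two-cell relations exactly as you do — \textbf{GP1} as the self-inverse property of the $6j$-move, \textbf{GP2} and \textbf{GP5} by disjointness of supports, \textbf{GP3} as the pentagon identity, \textbf{GP6} by applying the completeness relation twice, and \textbf{GP4} by a direct graphical-calculus computation. Your additional scaffolding (the edge-path presentation of $\Pi_1$ of a CW-complex, and the explicit circle-fusion argument for \textbf{GP6} and circle-sliding argument for \textbf{GP4}) simply fills in details the paper's terse proof leaves implicit, and does not change the substance of the argument.
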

\begin{proof}
The linear maps $F_{\Tsf,\Tsf^\prime}$, $G_g(v)$, are indeed isomorphisms. To see the first statement, observe that $F_{\Tsf,\Tsf^\prime}$ is its own inverse. In case of the gauge map, this is a consequence of the completeness relation in $\Ccal$. Thus, we only have to check that relations \textbf{GP1}-\textbf{GP6} are mapped to an equivalence of linear isomorphisms. Relation \textbf{GP1} is exactly the statement that $F_{\Tsf,\Tsf^\prime}$ is its own inverse. The \textbf{GP2} equivalence follows, since we are manipulating disjoint parts of the string-net, which obviously commutes. Next, \textbf{GP3} gets mapped to the pentagon relation for the $6j$-symbols. Relation \textbf{GP4} is an easy computation using the graphical calculus in $\Ccal$. Since adding cloaking circles at different internal vertices commutes, relation \textbf{GP5} holds. The group law \textbf{GP6} follows by applying the completeness relation twice.

\end{proof} 

\begin{defn} Given a $G$-surface $(\Sigma,\zeta)$, a set of marked points $M$ and a $G$-graded spherical fusion category $\Ccal$, the \textit{based bare string-net space} is defined by
\eq{
\SNrm^\Ccal(\Sigma,M;\Bbf)\coloneqq \varprojlim_{\Pi_1\lb \Pcal^G(\Sigma,\zeta)\rb} \SNrm^\Ccal_\Tsf(\Sigma,M;\Bbf)
}
\end{defn}

\begin{rem} Instead of taking the limit over the diagram, we could have instead taken its colimit. The resulting vector spaces are isomorphic, since the diagram involves only linear isomorphisms. For the same reason with have a canonical isomorphism
\eq{
\SNrm^\Ccal(\Sigma,M;\Bbf)\cong \SNrm^\Ccal_\Tsf(\Sigma,M;\Bbf)
}
for any $G$-triangulation $\Tsf$.
\end{rem}

The bare string-net space based at $M$ only depends on the $G$-surface $(\Sigma,\zeta)$, the set of marked points $M$ and $\Ccal$, but not on the choice of a $G$-triangulation based at $\delta\cup M$. The set $\delta$ will be uniquely determined by the input datum of a $1$-morphism. Thus, the only arbitrary choice left is the set $M$ of marked points. However, there is a distinguished isomorphism between based bare string-net spaces for different choices of marked points. We show this by showing that for all based string-net spaces, there is a specific isomorphism to the based bare string-net space with $M=\emptyset$, or $M$ a single point in case of a disk. For a given $M$, we pick a $G$-triangulation and from Lemma \ref{fat graph basis} we get that $\SNrm^\Ccal(\Sigma,M;\Bbf)$ has a fat graph basis with additional cloaking circles around elements in $M$. To make arguments more palpable, we discuss the specific case of a genus $2$ surface $\Sigma$ with $4$ boundary components. We choose an arbitrary point $\star\in \Sigma\backslash M$ and a set of generators in $\pi_1(\Sigma\backslash M, \star)$

\begin{figure}[H]
    \centering
    \makebox[\textwidth][c]{
    \resizebox{0.5\width}{!}{
    \begin{tikzpicture}
        
        \begin{scope}[on background layer]
            \draw  plot[smooth cycle, thick, tension=1] coordinates {(-3.5,2.5) (1,4) (9.5,2) (8,-4.5) (1,-5) (-3.5,-6) (-4,-2)};
        \end{scope}
        
        \node[inner sep=-1mm] (LH) at (-1.5,-1.5) {
        \resizebox{0.75\width}{0.3\height}{
        \begin{tikzpicture}
        
            \FPset\rbig{2}
            \FPset\rsmol{1.5}
            
            \FPset\initang{-30}
            \FPset\finang{-150}
            \FPeval\rang{round(abs((\initang - \finang) / 2), 0)}
            
            \FPeval\dsmol{\rsmol * (1-cos(\rang * \FPpi / 180))}
            \FPeval\dbig{\rbig - root(2, (\rbig^2 - \rsmol^2 * sin(\rang * \FPpi / 180 )^2))}
            \FPeval\hsep{(\rsmol - \dsmol) + (\rbig - \dbig)}
            
            \node (c) at (0,0) {};
            \centerarc[thick](c)(\initang:\finang:\rbig cm) {big};
            \centerarc[thick, transform canvas={yshift=-\hsep cm}](c)(-\initang:-\finang:\rsmol cm);
        
        \end{tikzpicture}
        }
        };
    
    \node[inner sep=-1.5mm] (RH) at (6,-1) {
    \resizebox{0.75\width}{0.5\height}{
    \begin{tikzpicture}
        
            \FPset\rbig{2}
            \FPset\rsmol{1.5}
            
            \FPset\initang{-30}
            \FPset\finang{-150}
            \FPeval\rang{round(abs((\initang - \finang) / 2), 0)}
            
            \FPeval\dsmol{\rsmol * (1-cos(\rang * \FPpi / 180))}
            \FPeval\dbig{\rbig - root(2, (\rbig^2 - \rsmol^2 * sin(\rang * \FPpi / 180 )^2))}
            \FPeval\hsep{(\rsmol - \dsmol) + (\rbig - \dbig)}
            
            \node (c) at (0,0) {};
            \centerarc[thick](c)(\initang:\finang:\rbig cm) {big};
            \centerarc[thick, transform canvas={yshift=-\hsep cm}](c)(-\initang:-\finang:\rsmol cm);
        
        \end{tikzpicture}
        }
        };

    \node[fill=white, inner sep=-3mm, rotate=30] (TL) at (-2.5,3.25) {
    \resizebox{0.3\width}{0.3\height}{
    \begin{tikzpicture}[scale=1]
    
        \draw (0,0) arc (0:180:2 and 1);
        \draw (0,0) arc (0:-180:2 and 1);
        \draw (0,0) .. controls ++(0,-1) and ++(-1,2) .. (1,-4);
        \draw (-4,0) .. controls ++(0,-1) and ++(1,2) .. (-5,-4);
    
    \end{tikzpicture}
    }
    };
    
    \node[fill=white, inner sep=-2mm, rotate=-15] (TR) at (5.5,4) {
    \resizebox{0.3\width}{0.3\height}{
    \begin{tikzpicture}[scale=1]
    
        \draw (0,0) arc (0:180:2 and 1);
        \draw (0,0) arc (0:-180:2 and 1);
        \draw (0,0) .. controls ++(0,-1) and ++(-1,2) .. (1,-4);
        \draw (-4,0) .. controls ++(0,-1) and ++(1,2) .. (-5,-4);
    
    \end{tikzpicture}
    }
    };
    
    \node[fill=white, inner sep=-2.9mm, rotate=-150] (BR) at (8,-4.5) {
    \resizebox{0.3\width}{0.3\height}{
    \begin{tikzpicture}[scale=1]
    
        \draw (0,0) arc (0:180:2 and 1);
        \draw (0,0) arc (0:-180:2 and 1);
        \draw (0,0) .. controls ++(0,-1) and ++(-1,2) .. (1,-4);
        \draw (-4,0) .. controls ++(0,-1) and ++(1,2) .. (-5,-4);

    \end{tikzpicture}
    }
    };
    
    \node[fill=white, inner sep=-2.9mm, rotate=150] (BL) at (-3.5,-5.5) {
    \resizebox{0.3\width}{0.3\height}{
    \begin{tikzpicture}[scale=1]
    
        \draw (0,0) arc (0:180:2 and 1);
        \draw (0,0) arc (0:-180:2 and 1);
        \draw (0,0) .. controls ++(0,-1) and ++(-1,2) .. (1,-4);
        \draw (-4,0) .. controls ++(0,-1) and ++(1,2) .. (-5,-4);

    \end{tikzpicture}
    }
    };
    
    \node[circle, 
          fill=jade,
          inner sep=0, 
          minimum size=3mm]
          (*) at (2,-2) {};
    
    \draw[jade, rounded corners=10mm]
        (*) -- ++(-3,-0.5)
        .. controls ++(-4,0.5) and ++(-2,1) .. ($(*) + (-3,1)$)
        -- (*);
        
    \node (LHT) at (LH.south) {};
    \node (LHB) at ($(LHT) + (0,-3.75)$) {};
    \draw[jade]
        (*) .. controls ++(-2,0) and ++(0.5,-0.5) .. ($(LHT) + (0,0)$);
    \draw[jade, dashed] (LHT) arc (90:-90:-0.5 and 1.875);
    \draw[jade]
        (*).. controls ++(-1,-2) and ++(1,1) .. ($(LHB) + (0,0)$);
    
    \draw[jade, rounded corners=10mm]
        (*) -- ++(3,-0.5)
        .. controls ++(3,0) and ++(3,0.75)
        .. ($(*) + (3,1)$)
        -- (*);
        
    \node (RHT) at (RH.south) {};
    \node (RHB) at (6,-5.4) {};
    \draw[jade]
        (*) .. controls ++(2,0) and ++(-0.5,-0.5) .. ($(RHT) + (0,0)$);
    \draw[jade, dashed] (RHT) arc (90:-90:0.5 and 1.75);
    \draw[jade]
        (*).. controls ++(1,-2) and ++(-1,1) .. ($(RHB) + (0,0)$);
        
    \node[circle, fill=red, inner sep=0, minimum size=2mm] (M1) at (2,3) {};
    \draw[jade, rounded corners]
        (*) -- ++(88.5:4)
        .. controls ++(68.5:2) and ++(111.5:2) .. ($(*) + (91.5:4)$)
        -- (*);

    \node[circle, fill=red, inner sep=0, minimum size=2mm] (M0) at (-3,0.5) {};
    \draw[jade, rounded corners] 
        (*) -- ++(152:5)
        .. controls ++(137:1.5) and ++(170:1.5) .. ($(*) + (155:5)$)
        -- (*);
        
    \node[circle, fill=red, inner sep=0, minimum size=2mm] (M2) at (8.5,1.5) {};
    \draw[jade, rounded corners] 
        (*) -- ++(27.5:6.5)
        .. controls ++(10:2) and ++(47.5:2) .. ($(*) + (29:6.5)$)
        -- (*);

    \node (BRR) at ($(BR.west) + (-0.22,0.25)$) {};
    \node (BRL) at ($(BR.east) + (0.05,0.1)$) {};
    \draw[jade] (*) .. controls ++(4,-1) and ++ (1,1) .. (BRR);
    \draw[jade] (*) .. controls ++(2,-1) and ++ (-1,0) .. (BRL);
    
    \node (BLR) at ($(BL.west) + (-0.15,0)$) {};
    \node (BLL) at ($(BL.east) + (0.05,-0.05)$) {};
    \draw[jade] (*) .. controls ++(-2,-1) and ++(1,0) .. (BLR);
    \draw[jade] (*) .. controls ++(-5,-1) and ++(-0.3,1) .. (BLL);
    
    \node (TRR) at ($(TR.east) + (-0.2,-0.3)$) {};
    \node (TRL) at ($(TR.west) + (0,-0.4)$) {};
    \draw[jade] (*) .. controls ++(2,3) and ++(1.5,-1) .. (TRR);
    \draw[jade] (*) .. controls ++ (1,3) and ++ (-1.5,0) .. (TRL);
    
    \node (TLR) at ($(TL.east) + (-0.2,0)$) {};
    \node (TLL) at ($(TL.west) + (0.2,-0.1)$) {};
    \draw[jade] (*) .. controls ++(-1,3) and ++(1,1) .. (TLR);
    \draw[jade] (*) .. controls ++(-2,3) and ++(-1.5,-1) .. (TLL);
    
    \end{tikzpicture}
    }}
\end{figure}

For the dual fat graph $\Gamma$ of the ideal triangulation $\Tsf$, we pick a maximal tree $t$ and denote $\ell\coloneqq E(\Gamma)-E(t)$. Since the fat graph $\Gamma$ generates $\pi_1(\Sigma\backslash M,\star)$, we can in fact choose $t$ in such a way, that upon collapsing $t$ to $\star$, the homotopy classes of the remaining edges $\ell$ agree with the chosen generators of $\pi_1(\Sigma\backslash M,\star)$. This means, that for any of the generators $\gamma$, there exists an element $l_\gamma\in \ell$, such that. after collapsing $t$ to $\star$ it holds that $\left[l_\gamma\right]=\gamma$. The tree $t$ has a disk-shaped neighborhood. Using local relations on this disk, we can replace the elements of the fat graph basis with string-nets supported on the contracted graphs. These string-nets still constitute a basis for the based bare string-net space. We use the completeness relation, possibly followed by a gauge transformation isomorphism, to map these new basis elements to string-nets where the points of $M$ are solely surrounded by cloaking circles

\begin{figure}[H]
    \centering
    \makebox[\textwidth][c]{
    \resizebox{0.5\width}{!}{
    \begin{tikzpicture}
        
        \begin{scope}[on background layer]
            \draw  plot[smooth cycle, thick, tension=1] coordinates {(-3.5,2.5) (1,4) (9.5,2) (8,-4.5) (1,-5) (-3,-5.5) (-4,-2)};
        \end{scope}
        
        \node[inner sep=-1mm] (LH) at (-1.5,-1) {
        \resizebox{0.75\width}{0.3\height}{
        \begin{tikzpicture}
        
            \FPset\rbig{2}
            \FPset\rsmol{1.5}
            
            \FPset\initang{-30}
            \FPset\finang{-150}
            \FPeval\rang{round(abs((\initang - \finang) / 2), 0)}
            
            \FPeval\dsmol{\rsmol * (1-cos(\rang * \FPpi / 180))}
            \FPeval\dbig{\rbig - root(2, (\rbig^2 - \rsmol^2 * sin(\rang * \FPpi / 180 )^2))}
            \FPeval\hsep{(\rsmol - \dsmol) + (\rbig - \dbig)}
            
            \node (c) at (0,0) {};
            \centerarc[thick](c)(\initang:\finang:\rbig cm) {big};
            \centerarc[thick, transform canvas={yshift=-\hsep cm}](c)(-\initang:-\finang:\rsmol cm);
        
        \end{tikzpicture}
        }
        };
    
        \node[inner sep=-1.5mm] (RH) at (6,-0.5) {
        \resizebox{0.75\width}{0.5\height}{
        \begin{tikzpicture}
        
            \FPset\rbig{2}
            \FPset\rsmol{1.5}
            
            \FPset\initang{-30}
            \FPset\finang{-150}
            \FPeval\rang{round(abs((\initang - \finang) / 2), 0)}
            
            \FPeval\dsmol{\rsmol * (1-cos(\rang * \FPpi / 180))}
            \FPeval\dbig{\rbig - root(2, (\rbig^2 - \rsmol^2 * sin(\rang * \FPpi / 180 )^2))}
            \FPeval\hsep{(\rsmol - \dsmol) + (\rbig - \dbig)}
            
            \node (c) at (0,0) {};
            \centerarc[thick](c)(\initang:\finang:\rbig cm) {big};
            \centerarc[thick, transform canvas={yshift=-\hsep cm}](c)(-\initang:-\finang:\rsmol cm);
        
        \end{tikzpicture}
        }
        };

    \node[inner sep=0mm, rotate=30] (TL) at (-2.5,3.25) {
    \resizebox{1\width}{1\height}{
    \begin{tikzpicture}[xscale=0.5,yscale=0.45]
    
        \fill[white] (0,0) arc (0:180:2 and 0.5)
        .. controls ++(0,-1) and ++(1,2) .. (-5,-4)
        -- (1,-4)
        .. controls ++(-1,2) and ++(0,-1) .. (0,0);
        
        \draw (0,0) arc (0:180:2 and 0.5)
        .. controls ++(0,-1) and ++(1,2) .. (-5,-4);
        \draw (1,-4) .. controls ++(-1,2) and ++(0,-1) .. (0,0);
        \draw (0,0) arc (0:-180:2 and 0.5);
    \end{tikzpicture}
    }
    };
    \node[anchor=center] (TL_out) at ($(TL.north) + (30-90:0.45)$) {};
    
    \node[inner sep=0mm, rotate=-20] (TR) at (5.5,3.75) {
    \resizebox{1\width}{1\height}{
    \begin{tikzpicture}[xscale=0.25,yscale=0.3]
    
        \fill[white] (0,0) arc (0:180:2 and 0.5)
        .. controls ++(0,-1) and ++(1,2) .. (-5,-4)
        -- (1,-4)
        .. controls ++(-1,2) and ++(0,-1) .. (0,0);
        
        \draw (0,0) arc (0:180:2 and 0.5)
        .. controls ++(0,-1) and ++(1,2) .. (-5,-4);
        \draw (1,-4) .. controls ++(-1,2) and ++(0,-1) .. (0,0);
        \draw (0,0) arc (0:-180:2 and 0.5);
    \end{tikzpicture}
    }
    };
    \node[anchor=center] (TR_out) at ($(TR.north) + (-20-90:0.3)$) {};
    
    \node[inner sep=0mm, rotate=-150] (BR) at (8,-4.5) {
    \resizebox{1\width}{1\height}{
    \begin{tikzpicture}[xscale=0.4,yscale=0.4]
    
        \fill[white] (0,0) arc (0:180:2 and 0.5)
        .. controls ++(0,-1) and ++(1,2) .. (-5,-4)
        -- (1,-4)
        .. controls ++(-1,2) and ++(0,-1) .. (0,0);
        
        \draw (0,0) arc (0:180:2 and 0.5)
        .. controls ++(0,-1) and ++(1,2) .. (-5,-4);
        \draw (1,-4) .. controls ++(-1,2) and ++(0,-1) .. (0,0);
        \draw (0,0) arc (0:-180:2 and 0.5);
    \end{tikzpicture}
    }
    };
    \node[anchor=center] (BR_out) at ($(BR.north) + (-150-90:0.4)$) {};
    
    \node[inner sep=0mm, rotate=150] (BL) at (-3,-5.25) {
    \resizebox{1\width}{1\height}{
    \begin{tikzpicture}[yscale=0.5,xscale=0.4]
        
        \fill[white] (0,0) arc (0:180:2 and 0.5)
        .. controls ++(0,-1) and ++(1,2) .. (-5,-4)
        -- (1,-4)
        .. controls ++(-1,2) and ++(0,-1) .. (0,0);
        
        \draw (0,0) arc (0:180:2 and 0.5)
        .. controls ++(0,-1) and ++(1,2) .. (-5,-4);
        \draw (1,-4) .. controls ++(-1,2) and ++(0,-1) .. (0,0);
        \draw (0,0) arc (0:-180:2 and 0.5);
        
    \end{tikzpicture}
    }
    };
    \node[anchor=center] (BL_out) at ($(BL.north) + (150-90:0.5)$) {};
    
    \node[circle, 
          fill,
          inner sep=0, 
          minimum size=6mm]
          (*) at (2,-2.5) {};

    \draw (*.140) .. controls ++(-1,4) and ++(-11.5,0) .. (*.160);
    
    \node (LHT) at ($(LH.south) + (0,0.075)$) {};
    \node[anchor=center] (LHB) at ($(LHT) + (0.7,-3.9)$) {};
    \draw
        (*.150) .. controls ++(-2,0.5) and ++(0,-1) .. ($(LHT) + (0.7,0)$);
    \draw[dashed] ($(LHT) + (0.7,0)$) arc (90:-90:-0.5 and 1.95);
    \draw
        (*.190).. controls ++(-1,-2) and ++(0,1) .. ($(LHB) + (0,0)$);
    
    \draw (*.10) .. controls ++(11,0.25) and ++(4,5) .. (*.30);
    
    \node (RHT) at (RH.south) {};
    \node[anchor=center] (RHB) at (6,-5.405) {};
    \draw
        (*.20) .. controls ++(2,0.5) and ++(0,-1) .. ($(RHT) + (0,0)$);
    \draw[dashed] (RHT) arc (90:-90:0.5 and 2.025);
    \draw
        (*.340).. controls ++(2,-0.5) and ++(0,2) .. (RHB.center);
        
    \node[circle, fill=red, inner sep=0, minimum size=4mm] (M1) at (2,2.5) {};
    \node[violet, circle, draw, anchor=center, inner sep=0, minimum size=1cm] at (M1.center) {};
    \draw[rounded corners] (*.80) .. controls ++(0,3) and ++(-0.25,-1) .. ($(M1.center) + (0.75,-2)$) arc (0:180:0.5 and 0.5) .. controls ++(0.25,-1) and ++(0,3) .. (*.90);

    \node[circle, fill=red, inner sep=0, minimum size=4mm] (M0) at (-3,0.5) {};
    \node[violet, circle, draw, anchor=center, inner sep=0, minimum size=1cm] at (M0.center) {};
    \draw[rounded corners] 
        (*.120) .. controls ++(-0.75,4) and ++(1,0) .. ($(M0.center) + (1.25,0.5)$) arc (90:270:0.5) .. controls ++(1,0) and ++(-1,4) .. (*.130);
   
    \node[circle, fill=red, inner sep=0, minimum size=4mm] (M2) at (8.5,1.5) {};
    \node[violet, circle, draw, anchor=center, inner sep=0, minimum size=1cm] at (M2.center) {};
    \draw[rounded corners] (*.40) .. controls ++(0.5,1) and ++(-3,-1) .. (7,1) arc (-90:90:0.5) .. controls ++(-3,-1) and ++(0.5,1) .. (*.50);
    
    \node (BRR) at ($(BR.west) + (-0.41,-0.1)$) {};
    \node (BRL) at ($(BR.east) + (0.55,0.22)$) {};
    \draw (*.0) .. controls ++(4,-0.5) and ++ (0.5,2) .. (BRR);
    \draw (*.350) .. controls ++(4,-0.5) and ++ (-1,0) .. (BRL);
    
    \draw (*.355) .. controls ++(1.5,-0.1) and ++(-0.5,1) .. (BR_out.center);
    
    \node (BLR) at ($(BL.west) + (-0.3,0.5)$) {};
    \node (BLL) at ($(BL.east) + (0.55,-0.05)$) {};
    \draw (*.180) .. controls ++(-3.5,-1) and ++(2,0) .. (BLR);
    \draw (*.170) .. controls ++(-5,-0.5) and ++(-0.5,1.5) .. (BLL);
    
    \draw (*.175) .. controls ++(-1.5,-0.1) and ++(0.5,1) .. (BL_out.center);
    
    \node (TRR) at ($(TR.east) + (-0.475,-0)$) {};
    \node (TRL) at ($(TR.west) + (0.25,-0.4)$) {};
    \draw (*.60) .. controls ++(2,5) and ++(1.5,-1) .. (TRR);
    \draw (*.70) .. controls ++ (1,3) and ++ (-1.5,0) .. (TRL);
    
    \draw (*.65) .. controls ++(0.25,1) and ++(-0.5,-1) .. (TR_out.center);
    
    \node (TLR) at ($(TL.east) + (-0.45,-0.4)$) {};
    \node (TLL) at ($(TL.west) + (0.65,-0.2)$) {};
    \draw (*.100) .. controls ++(-0.5,6) and ++(1,-0.25) .. (TLR);
    \draw(*.110) .. controls ++(-0.75,6) and ++(-1,-2) .. (TLL);
    
    \draw (*.105) .. controls ++(-0.25,5) and ++(0.5,-1) .. (TL_out.center);
    
    \end{tikzpicture}
    }}
\end{figure}

After further contracting the graph, we obtain 
\begin{figure}[H]
    \centering
    \makebox[\textwidth][c]{
    \resizebox{0.5\width}{!}{
    \begin{tikzpicture}
        
        \begin{scope}[on background layer]
            \draw  plot[smooth cycle, thick, tension=1] coordinates {(-3.5,2.5) (1,4) (9.5,2) (8,-4.5) (1,-5) (-3,-5.5) (-4,-2)};
        \end{scope}
        
        \node[inner sep=-1mm] (LH) at (-1.5,-1) {
        \resizebox{0.75\width}{0.3\height}{
        \begin{tikzpicture}
        
            \FPset\rbig{2}
            \FPset\rsmol{1.5}
            
            \FPset\initang{-30}
            \FPset\finang{-150}
            \FPeval\rang{round(abs((\initang - \finang) / 2), 0)}
            
            \FPeval\dsmol{\rsmol * (1-cos(\rang * \FPpi / 180))}
            \FPeval\dbig{\rbig - root(2, (\rbig^2 - \rsmol^2 * sin(\rang * \FPpi / 180 )^2))}
            \FPeval\hsep{(\rsmol - \dsmol) + (\rbig - \dbig)}
            
            \node (c) at (0,0) {};
            \centerarc[thick](c)(\initang:\finang:\rbig cm) {big};
            \centerarc[thick, transform canvas={yshift=-\hsep cm}](c)(-\initang:-\finang:\rsmol cm);
        
        \end{tikzpicture}
        }
        };
    
        \node[inner sep=-1.5mm] (RH) at (6,-0.5) {
        \resizebox{0.75\width}{0.5\height}{
        \begin{tikzpicture}
        
            \FPset\rbig{2}
            \FPset\rsmol{1.5}
            
            \FPset\initang{-30}
            \FPset\finang{-150}
            \FPeval\rang{round(abs((\initang - \finang) / 2), 0)}
            
            \FPeval\dsmol{\rsmol * (1-cos(\rang * \FPpi / 180))}
            \FPeval\dbig{\rbig - root(2, (\rbig^2 - \rsmol^2 * sin(\rang * \FPpi / 180 )^2))}
            \FPeval\hsep{(\rsmol - \dsmol) + (\rbig - \dbig)}
            
            \node (c) at (0,0) {};
            \centerarc[thick](c)(\initang:\finang:\rbig cm) {big};
            \centerarc[thick, transform canvas={yshift=-\hsep cm}](c)(-\initang:-\finang:\rsmol cm);
        
        \end{tikzpicture}
        }
        };

    \node[inner sep=0mm, rotate=30] (TL) at (-2.5,3.25) {
    \resizebox{1\width}{1\height}{
    \begin{tikzpicture}[xscale=0.5,yscale=0.45]
    
        \fill[white] (0,0) arc (0:180:2 and 0.5)
        .. controls ++(0,-1) and ++(1,2) .. (-5,-4)
        -- (1,-4)
        .. controls ++(-1,2) and ++(0,-1) .. (0,0);
        
        \draw (0,0) arc (0:180:2 and 0.5)
        .. controls ++(0,-1) and ++(1,2) .. (-5,-4);
        \draw (1,-4) .. controls ++(-1,2) and ++(0,-1) .. (0,0);
        \draw (0,0) arc (0:-180:2 and 0.5);
    \end{tikzpicture}
    }
    };
    \node[anchor=center] (TL_out) at ($(TL.north) + (30-90:0.45)$) {};
    
    \node[inner sep=0mm, rotate=-20] (TR) at (5.5,3.75) {
    \resizebox{1\width}{1\height}{
    \begin{tikzpicture}[xscale=0.25,yscale=0.3]
    
        \fill[white] (0,0) arc (0:180:2 and 0.5)
        .. controls ++(0,-1) and ++(1,2) .. (-5,-4)
        -- (1,-4)
        .. controls ++(-1,2) and ++(0,-1) .. (0,0);
        
        \draw (0,0) arc (0:180:2 and 0.5)
        .. controls ++(0,-1) and ++(1,2) .. (-5,-4);
        \draw (1,-4) .. controls ++(-1,2) and ++(0,-1) .. (0,0);
        \draw (0,0) arc (0:-180:2 and 0.5);
    \end{tikzpicture}
    }
    };
    \node[anchor=center] (TR_out) at ($(TR.north) + (-20-90:0.3)$) {};
    
    \node[inner sep=0mm, rotate=-150] (BR) at (8,-4.5) {
    \resizebox{1\width}{1\height}{
    \begin{tikzpicture}[xscale=0.4,yscale=0.4]
    
        \fill[white] (0,0) arc (0:180:2 and 0.5)
        .. controls ++(0,-1) and ++(1,2) .. (-5,-4)
        -- (1,-4)
        .. controls ++(-1,2) and ++(0,-1) .. (0,0);
        
        \draw (0,0) arc (0:180:2 and 0.5)
        .. controls ++(0,-1) and ++(1,2) .. (-5,-4);
        \draw (1,-4) .. controls ++(-1,2) and ++(0,-1) .. (0,0);
        \draw (0,0) arc (0:-180:2 and 0.5);
    \end{tikzpicture}
    }
    };
    \node[anchor=center] (BR_out) at ($(BR.north) + (-150-90:0.4)$) {};
    
    \node[inner sep=0mm, rotate=150] (BL) at (-3,-5.25) {
    \resizebox{1\width}{1\height}{
    \begin{tikzpicture}[yscale=0.5,xscale=0.4]
        
        \fill[white] (0,0) arc (0:180:2 and 0.5)
        .. controls ++(0,-1) and ++(1,2) .. (-5,-4)
        -- (1,-4)
        .. controls ++(-1,2) and ++(0,-1) .. (0,0);
        
        \draw (0,0) arc (0:180:2 and 0.5)
        .. controls ++(0,-1) and ++(1,2) .. (-5,-4);
        \draw (1,-4) .. controls ++(-1,2) and ++(0,-1) .. (0,0);
        \draw (0,0) arc (0:-180:2 and 0.5);
        
    \end{tikzpicture}
    }
    };
    \node[anchor=center] (BL_out) at ($(BL.north) + (150-90:0.5)$) {};
    
    \node[circle, 
          fill,
          inner sep=0, 
          minimum size=6mm]
          (*) at (2,-2.5) {};

    \draw (*.140) .. controls ++(-1,4) and ++(-11.5,0) .. (*.160);
    
    \node (LHT) at ($(LH.south) + (0,0.075)$) {};
    \node[anchor=center] (LHB) at ($(LHT) + (0.7,-3.9)$) {};
    \draw
        (*.150) .. controls ++(-2,0.5) and ++(0,-1) .. ($(LHT) + (0.7,0)$);
    \draw[dashed] ($(LHT) + (0.7,0)$) arc (90:-90:-0.5 and 1.95);
    \draw
        (*.190).. controls ++(-1,-2) and ++(0,1) .. ($(LHB) + (0,0)$);
    
    \draw (*.10) .. controls ++(11,0.25) and ++(4,5) .. (*.30);
    
    \node (RHT) at (RH.south) {};
    \node[anchor=center] (RHB) at (6,-5.405) {};
    \draw
        (*.20) .. controls ++(2,0.5) and ++(0,-1) .. ($(RHT) + (0,0)$);
    \draw[dashed] (RHT) arc (90:-90:0.5 and 2.025);
    \draw
        (*.340).. controls ++(2,-0.5) and ++(0,2) .. (RHB.center);
        
    \node[circle, fill=red, inner sep=0, minimum size=4mm] (M1) at (2,2.5) {};
    \node[violet, circle, draw, anchor=center, inner sep=0, minimum size=1cm] at (M1.center) {};

    \node[circle, fill=red, inner sep=0, minimum size=4mm] (M0) at (-3,0.5) {};
    \node[violet, circle, draw, anchor=center, inner sep=0, minimum size=1cm] at (M0.center) {};
   
    \node[circle, fill=red, inner sep=0, minimum size=4mm] (M2) at (8.5,1.5) {};
    \node[violet, circle, draw, anchor=center, inner sep=0, minimum size=1cm] at (M2.center) {};
    
    \node (BRR) at ($(BR.west) + (-0.41,-0.1)$) {};
    \node (BRL) at ($(BR.east) + (0.55,0.22)$) {};
    \draw (*.0) .. controls ++(4,-0.5) and ++ (0.5,2) .. (BRR);
    \draw (*.350) .. controls ++(4,-0.5) and ++ (-1,0) .. (BRL);
    
    \draw (*.355) .. controls ++(1.5,-0.1) and ++(-0.5,1) .. (BR_out.center);
    
    \node (BLR) at ($(BL.west) + (-0.3,0.5)$) {};
    \node (BLL) at ($(BL.east) + (0.55,-0.05)$) {};
    \draw (*.180) .. controls ++(-3.5,-1) and ++(2,0) .. (BLR);
    \draw (*.170) .. controls ++(-5,-0.5) and ++(-0.5,1.5) .. (BLL);
    
    \draw (*.175) .. controls ++(-1.5,-0.1) and ++(0.5,1) .. (BL_out.center);
    
    \node (TRR) at ($(TR.east) + (-0.475,-0)$) {};
    \node (TRL) at ($(TR.west) + (0.25,-0.4)$) {};
    \draw (*.60) .. controls ++(2,5) and ++(1.5,-1) .. (TRR);
    \draw (*.70) .. controls ++ (1,3) and ++ (-1.5,0) .. (TRL);
    
    \draw (*.65) .. controls ++(0.25,1) and ++(-0.5,-1) .. (TR_out.center);
    
    \node (TLR) at ($(TL.east) + (-0.45,-0.4)$) {};
    \node (TLL) at ($(TL.west) + (0.65,-0.2)$) {};
    \draw (*.100) .. controls ++(-0.5,6) and ++(1,-0.25) .. (TLR);
    \draw (*.110) .. controls ++(-0.75,6) and ++(-1,-2) .. (TLL);
    
    \draw (*.105) .. controls ++(-0.25,5) and ++(0.5,-1) .. (TL_out.center);
    
    \end{tikzpicture}
    }}
\end{figure}

\noindent
where we can restrict to colorings by simple objects. Thus for any two choices of marked points $M_1$, $M_2$, by identifying the basis elements, we have a distinguished isomorphism between string-net spaces defined in terms of $G$-triangulations based at $\delta\cup M_1$ and $\delta\cup M_2$. The isomorphisms give a projective system  of vector spaces and we can take its projective limit, which we denote by $\SNrm^\Ccal(\Sigma;\Bbf)$.

\begin{defn}
The vector space $\SNrm^\Ccal(\Sigma;\Bbf)$ is called \textit{bare string-net space}.  
\end{defn}

To summarize our construction, we first needed a combinatorial replacement for a surface $\Sigma$ with map $\zeta:\Sigma\rightarrow BG$, in order to give a sensible definition for a string-net graph in $\Sigma$ to respect the $G$-structure. This point was settled by using $G$-triangulations. Once we came up with a definition of a pre-string-net space depending on a given $G$-triangulation, the whole rest of the section was devoted to build up a definition depending only on $\Sigma$ and $\zeta$, but not on the combinatorial model. Taking limits over projective systems only involving isomorphisms has the advantage that, whenever we want to work with the bare string-net space, we can just pick our favorite $G$-triangulation and compute everything relative to it. We will make use of this fact in section \ref{computations}, when actually computing string-net spaces. 

\begin{rem} Allowing an additional set $M$ of vertices in an ideal triangulation will simplify the description of the behavior of string-net spaces under gluing of $G$-surfaces described in section \ref{61}. The set of $G$-triangulations based on boundary points of surfaces is not preserved under gluing of surfaces since gluing gives a new internal vertex. Hence, we had to allow more general $G$-triangulations from the start.
\end{rem}

\subsection{Cylinder Category}

Since we want to construct the $(2,1)$-part of a functor
\eq{
Z:\GBordpzc_\star(3,2,1)\rightarrow \BiModpzc_\Kbb
}
a surface $\Sigma$ with $n$-boundary components should get mapped to a profunctor
\eq{
Z_\Sigma:\underbrace{\Cylpzc(\Ccal)^{\epsilon_1}\boxtimes\cdots\boxtimes \Cylpzc(\Ccal)^{\epsilon_n}}_{n}\rightarrow \Vectpzc_\Kbb
}
with $\Cylpzc(\Ccal)\coloneqq Z(S^1)$ or its dual, depending on the orientation of the boundary component. E.g. a pair of pants with two incoming and one outgoing boundary endows $\Cylpzc(S^1)$ with a monoidal structure. For a discussion of how a once extended $G$-equivariant HTQFT gives $\Cylpzc(S^1)$ the structure of a $G$-braided category see \cite{schweigert2020extended}.

Thus we need to get the value of the string-net functor on a circle with additional $G$-label $g$ (cf. Section 4). That is, we need to compute the cylinder category.
Since in the non-equivariant case of the string-net construction, the circle category is equivalent to the Drinfeld center of the input category, it is reasonable to expect that equivariant string-nets on a $g$-labeled circle compute the $g$-homogeneous component $\Zsf_G(\Ccal)_g$ of the $G$-center $\Zsf_G(\Ccal)$. 

\begin{rem}\label{remIntro}
In \cite{turaev20203} the cylinder category is chosen to be $\Zsf_G(\Ccal)$ by choosing a $\Zsf_G(\Ccal)$-coloring on surfaces with boundary. It is a nice feature of the string-net construction, that one doesn't need to make that choice, but one can actually compute the category. The only input needed is a $G$-graded spherical fusion category $\Ccal$, which is the supposed to be the value of a fully extended HTQFT on a point. 

At the same time, the following computation is a check that our construction is really sensible.
\end{rem}

Let $S^1_g$ be the $g$-labeled circle. Recall that $S^1_g$ has a distinguished base point $\star \in S^1_g$. We define a category $\widetilde{\Cylpzc}_g$ as follows. Objects are sets of $\Ccal$-labeled, signed marked points $(\mbf,\cbf)\coloneqq \lbr (m_1^{\epsilon_1},c_1^{\epsilon_1}),\cdots, (m_N^{\epsilon_N},c_N^{\epsilon_N})\rbr$. The points are linearly ordered by starting at $\star$ and going around the circle in counterclockwise direction. The object $c_i^{\epsilon_i}$ has underlying $G$-color $g_i^{\epsilon_i}$ and we require 
\eq{
g_1^{\epsilon_1}\cdots g_N^{\epsilon_N}=g\; . 
}
In the non-equivariant string-net construction, the morphism space of the cylinder category is defined to be the string-net space, with the appropriate boundary value, on the cylinder. Similarly, in our equivariant setting, $\hom_{\widetilde{\Cylpzc}_g}\lb (\mbf,\cbf),(\nbf,\dbf)\rb $ is given is by the string-net spaces, where we have to additionally specify a map to the classifying space $BG$. On the circle this map is given by $g$. It is appropriate for cylinder categories to require it to be given by the neutral element $e\in G$ for the ``radial'' direction, cf. figure \ref{radial direction}. Composition of morphisms is given by gluing cylinders and concatenating string-nets. 

\begin{prop} Let $\Cylpzc_g$ be the Karoubi-envelope of $\widetilde{\Cylpzc}_g$. There is an equivalence
\eq{
\Cylpzc_g\simeq \Zsf_G(\Ccal)_g\quad .
}
\end{prop}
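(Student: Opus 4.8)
The plan is to realize the equivalence as induction followed by idempotent completion. I would construct a functor $\mathcal{F}\colon \widetilde{\Cylpzc}_g \to \Zsf_G(\Ccal)_g$, show it is fully faithful onto the induced objects, and then pass to Karoubi envelopes. On objects I set $\mathcal{F}(\mathbf{m},\mathbf{c}) = I\!\left(\bigotimes_i c_i^{\epsilon_i}\right)$, the induction in the sense of \eqref{induction functor} of the fused label, together with its standard half-braiding $\gamma_{I(c),\bullet}$. The grading condition $g_1^{\epsilon_1}\cdots g_N^{\epsilon_N}=g$ guarantees $\bigotimes_i c_i^{\epsilon_i}\in\Ccal_g$, and since $I_e\subset\Ccal_e$ we get $i^\ast\otimes c\otimes i\in\Ccal_{e\cdot g\cdot e}=\Ccal_g$, so $\mathcal{F}$ respects the $g$-grading and lands in $\Zsf_G(\Ccal)_g$, matching the $g$-label of the circle.

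First I would reduce to single-point objects. On $S^1_g$ the marked points carry a linear order starting at $\star$, and the evident fusion and splitting string-nets (trivalent couplings applied to adjacent strands in their given order, so that no braiding occurs) are mutually inverse by the local relations and furnish an isomorphism in $\widetilde{\Cylpzc}_g$ between $(\mathbf{m},\mathbf{c})$ and $\{(\star,\bigotimes_i c_i^{\epsilon_i})\}$. It thus suffices to treat objects $\{(\star,c)\}$ with $c\in\Ccal_g$ homogeneous. The central computation is the morphism space, which I would evaluate by fixing a $G$-triangulation of the cylinder $S^1_g\times[0,1]$ whose radial arc carries the neutral label $e$ and whose two boundary loops carry $g$, and then applying Lemma~\ref{fat graph basis}. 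The $e$-labelled radial edge forces the internal summation to run over simples $i\in I_e$, and contracting the resulting fat graph yields a natural isomorphism
\[
\hom_{\widetilde{\Cylpzc}_g}\bigl(\{(\star,c)\},\{(\star,d)\}\bigr)\;\cong\;\bigoplus_{i\in I_e}\hom_\Ccal\!\left(c,\,i^\ast\otimes d\otimes i\right)\;=\;\hom_\Ccal\!\left(c,\,FI(d)\right).
\]

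By the adjunction $I\dashv F$ the right-hand side is $\hom_{\Zsf_G(\Ccal)}(I(c),I(d))$, so $\mathcal{F}$ is fully faithful once functoriality is checked. Functoriality I would verify by matching composition laws: gluing cylinders and concatenating string-nets on one side, and composition of center morphisms on the other, which under the isomorphism above is the routine statement that stacking a winding-labelled annulus realizes the adjunction composite. It then remains to see that the half-braiding transported through $\mathcal{F}$ is the standard one of $I(c)$: an $X\in\Ccal_e$ strand may be slid around the hole of the cylinder (legitimate precisely because the radial direction is $e$-labelled), and the resulting non-crossing move is exactly the picture defining $\gamma_{I(c),X}$. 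Finally, since $\Zsf_G(\Ccal)_g$ is semisimple, hence idempotent complete, $\mathcal{F}$ extends to a fully faithful functor $\Cylpzc_g=\mathrm{Kar}(\widetilde{\Cylpzc}_g)\to\Zsf_G(\Ccal)_g$; essential surjectivity holds because every object $Y\in\Zsf_G(\Ccal)_g$ is a retract of an induced object (the counit $IF(Y)\to Y$ is split epi in the semisimple setting), so the induced objects generate under idempotents. This gives the asserted equivalence.

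The main obstacle is the explicit morphism-space computation together with its naturality. One must arrange the $G$-triangulation of the cylinder so that the $e$-labelled radial edge genuinely cuts the internal sum down to $I_e$ — this is the mechanism producing the $\Ccal_e$-half-braiding rather than a full Drinfeld-center one — and carefully track signs and duals through the fat-graph contraction to land on $FI(d)$ with the correct variance. Confirming that the cylinder-induced half-braiding coincides on the nose with $\gamma_{I(c),\bullet}$, including the bookkeeping of the cloaking circles attached at the marked points, is the most delicate step; once it is in place, full faithfulness and the Karoubi completion argument are essentially formal.
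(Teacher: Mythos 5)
Your proposal is correct, and its mathematical core is the same as the paper's: both arguments rest on representing an annular string-net by a single coupon threaded by one loop whose colour is forced into $\Ccal_e$ by the $e$-labelled radial edge, and on transporting this through the induction functor, so that morphism spaces become $\hom_\Ccal(c,FI(d))\cong\hom_{\Zsf_G(\Ccal)}(I(c),I(d))$. Where you genuinely differ is in how the equivalence is concluded. The paper works directly with pairs $((\mbf,\cbf),p)$ in the Karoubi envelope, sends such a pair to $(I(c),F(p))$, and exhibits an explicit quasi-inverse $K$ mapping $Z\in\Zsf_G(\Ccal)_g$ to its underlying object equipped with the cloaking idempotent of Figure \ref{cylinder idempotent}, citing \cite[Theorem~6.4]{kirillov2011string} for the verification that $F$ and $K$ are mutually inverse. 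You instead prove full faithfulness of the induced-object functor on $\widetilde{\Cylpzc}_g$ (a Kleisli-type description of the cylinder category) and obtain essential surjectivity abstractly: since $\Zsf_G(\Ccal)_g$ is semisimple and the counit $IF(Y)\to Y$ is epi (its image under the faithful forgetful functor is split epi by a triangle identity), every object of the $G$-center is a retract of an induced object. Your route avoids constructing $K$ and checking the two natural isomorphisms, at the price of invoking semisimplicity of the $G$-center; the paper's explicit $K$ is constructive, and its cloaking idempotent is exactly what is reused in section \ref{61} to define boundary values and gluing, which is why the authors prefer it. One step you should spell out when writing this up: composition in $\widetilde{\Cylpzc}_g$ is gluing followed by the canonical identification that removes the internal marked point created at the glued basepoints; that identification inserts a cloaking circle there, but since no strand of the concatenated string-net passes near that point the circle evaluates to $1$, so composition really is plain stacking, and your completeness-relation computation identifying it with the Kleisli composite (hence with composition in $\Zsf_G(\Ccal)$ under the adjunction) goes through.
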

\begin{proof}
An object in $\Cylpzc_g$ is a pair $((\mbf,\cbf),p)$ of an object in $\widetilde{\Cylpzc}_g$ and an idempotent $p$ in $\widetilde{\Cylpzc}_g$. We pick a $G$-triangulation of the cylinder without internal marked points. It is not hard to see, that $p$ can be represented by a graph shown in red in figure \ref{radial direction}.

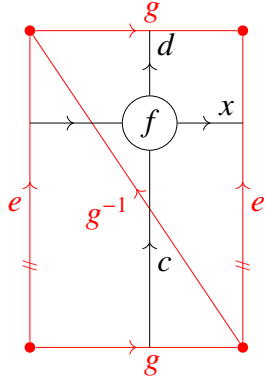
\begin{figure}[H]
    \makebox[\textwidth][c]{
    \resizebox{0.7\width}{!}{
    \begin{tikzpicture}
        \node[circle, fill=red, inner sep=0, minimum size=2mm] (a) at (2,3) {};
        \node[circle, fill=red, inner sep=0, minimum size=2mm] (b) at (2,-3) {};
        \node[circle, fill=red, inner sep=0, minimum size=2mm] (c) at (-2,-3) {};
        \node[circle, fill=red, inner sep=0, minimum size=2mm] (d) at (-2,3) {};
        
        \draw[red, mid={<}{0.5}] (a) -- (b) 
            node[near end, rotate=90] {\scriptsize //} 
            node[anchor=north west, midway] {\Large $e$};
        \draw[red, mid={<}{0.5}] (d) -- (c) 
            node[near end, rotate=90] {\scriptsize //} 
            node[anchor=north east, midway] {\Large $e$};
        \draw[red, mid={>}{0.5}] (c) -- (b) 
            node[anchor=north west, midway] {\Large $g$};
        \draw[red, mid={>}{0.5}] (d) -- (a) 
            node[anchor=south west, midway] {\Large $g$};
        \draw[red, mid={>}{0.5}] (b) -- (d) 
            node[anchor=north east, midway] {\Large $g^{-1}$};
        
        \node[circle, draw] (e) at (0.25,1.25) {\Large $f$};
        \draw[mid={>}{0.5}] (e) -- ++(1.75,0) 
            node[anchor=south west, midway] {\Large $x$};
        \draw[mid={<}{0.6}] (e) -- ++(-2.25,0);
        \draw[mid={>}{0.5}] (e) -- ++(0,1.75)
            node[anchor=south west, midway] {\Large $d$};
        \draw[mid={<}{0.5}] (e) -- ++(0,-4.25)
            node[anchor=north west, midway] {\Large $c$};
        
    \end{tikzpicture}
    }
    }
        \caption{In red we show an ideal triangulation of the cylinder with in- and outgoing boundary $S^1_g$. The vertical red lines are along the ``radial direction'' of the cylinder.}
    \label{radial direction}
\end{figure}

\noindent
with $c,d\in \Ccal_g$ the signed tensor products of objects $\cbf$, $\dbf$ and $x\in \Ccal_e$. We define a functor $F:\Cylpzc_g\rightarrow \Zsf_G(\Ccal)_g$, whose action on objects is given by
\eq{
F:((\mbf,\cbf),p)\longmapsto (I(c),F(p))\, 
} 
where $I$ is the induction functor (cf. \eqref{induction functor}) and $F(p)$ is the idempotent shown in the following figure:
\begin{figure}[H]
    \makebox[\textwidth][c]{
    \resizebox{0.7\width}{!}{
    \begin{tikzpicture}
    
        \node (eq) at (0,0) {\LARGE $\displaystyle{F(p) = \bigoplus_{i,j \in I_e} \frac{d_i}{D}}$};
        \node[circle, 
              draw, 
              inner sep=0, 
              minimum size=7mm] 
              (a) at ($(eq.east) + (1,0)$) {\Large $\alpha$};
        \node[circle, 
              draw, 
              inner sep=0, 
              minimum size=7mm] 
              (b) at ($(a.east) + (2,0)$) {\Large $f$};
        \node[circle, 
              draw, 
              inner sep=0, 
              minimum size=7mm] 
              (c) at ($(b.east) + (2,0)$) {\Large $\alpha$};
        
        \draw[mid={>}{0.5}] (a) -- (b)
            node[midway, anchor=south west] {\Large $x$};
        \draw[mid={>}{0.5}] (b) -- (c)
            node[midway, anchor=south west] {\Large $x$};
            
        \draw[mid={>}{0.5}] (a) -- ++(0,3)
            node[midway, anchor=south west] {\Large $j$};
        \draw[mid={>}{0.5}] (b) -- ++(0,3)
            node[midway, anchor=south west] {\Large $d$};
        \draw[mid={>}{0.5}] (c) -- ++(0,3)
            node[midway, anchor=south west] {\Large $j$};
        \draw[mid={>}{0.5}] (a) -- ++(0,-3)
            node[midway, anchor=north west] {\Large $i$};
        \draw[mid={>}{0.5}] (b) -- ++(0,-3)
            node[midway, anchor=north west] {\Large $c$};
        \draw[mid={>}{0.5}] (c) -- ++(0,-3)
            node[midway, anchor=north west] {\Large $i$};    
    \end{tikzpicture}
    }
    }
\end{figure}
where $f$ is the morphism in the representation of $p$ given above. Its action on morphisms is defined in the obvious way. In the other direction, we define a functor $K:\Zsf_G(\Ccal)_g\longrightarrow \Cylpzc_g$ mapping $Z\in \Zsf_G(\Ccal)_g$ to the underlying object in $\Ccal_g$ and the idempotent  given by
\begin{figure}[H]
    \makebox[\textwidth][c]{
    \resizebox{0.7\width}{!}{
    \begin{tikzpicture}
    
        \node[circle, fill=red, inner sep=0, minimum size=2mm] (a) at (2,3) {};
        \node[circle, fill=red, inner sep=0, minimum size=2mm] (b) at (2,-3) {};
        \node[circle, fill=red, inner sep=0, minimum size=2mm] (c) at (-2,-3) {};
        \node[circle, fill=red, inner sep=0, minimum size=2mm] (d) at (-2,3) {};
        \draw[red, mid={<}{0.5}] (a) -- (b) 
            node[near end, rotate=90] {\scriptsize //} 
            node[anchor=north west, midway] {\Large $e$};
        \draw[red, mid={<}{0.5}] (d) -- (c) 
            node[near end, rotate=90] {\scriptsize //} 
            node[anchor=north east, midway] {\Large $e$};
        \draw[red, mid={>}{0.5}] (c) -- (b) 
            node[anchor=north west, midway] {\Large $g$};
        \draw[red, mid={>}{0.5}] (d) -- (a) 
            node[anchor=south west, midway] {\Large $g$};
        \draw[red, mid={>}{0.5}] (b) -- (d) 
            node[anchor=north east, midway] {\Large $g^{-1}$};
        
        \draw[violet] (-2,1.5) -- ++(4,0);
        \node[fill=white] at (0.5,1.5) {};
        \draw[mid={>}{0.8}] (0.5,-3) -- ++(0,6)
            node[near end, yshift=5mm, anchor=west] {\Large $Z$};

        \node[anchor=west] (eq) at (2.5,0) {\LARGE $\displaystyle{=\sum_{i \in I_e} \frac{d_i}{D^2}}$};
        \node (new_orig) at ($(eq.east) + (2.5,0)$) {};

        \node[circle, fill=red, inner sep=0, minimum size=2mm] (a2) at ($(new_orig) + (2,3)$) {};
        \node[circle, fill=red, inner sep=0, minimum size=2mm] (b2) at ($(new_orig) + (2,-3)$) {};
        \node[circle, fill=red, inner sep=0, minimum size=2mm] (c2) at ($(new_orig) + (-2,-3)$) {};
        \node[circle, fill=red, inner sep=0, minimum size=2mm] (d2) at ($(new_orig) + (-2,3)$) {};
        \draw[red, mid={<}{0.5}] (a2) -- (b2) 
            node[near end, rotate=90] {\scriptsize //} 
            node[anchor=north west, midway] {\Large $e$};
        \draw[red, mid={<}{0.5}] (d2) -- (c2) 
            node[near end, rotate=90] {\scriptsize //} 
            node[anchor=north east, midway] {\Large $e$};
        \draw[red, mid={>}{0.5}] (c2) -- (b2) 
            node[anchor=north west, midway] {\Large $g$};
        \draw[red, mid={>}{0.5}] (d2) -- (a2) 
            node[anchor=south west, midway] {\Large $g$};
        \draw[red, mid={>}{0.5}] (b2) -- (d2) 
            node[anchor=north east, midway] {\Large $g^{-1}$};
        
        \draw[mid={>}{0.5}] ($(new_orig) + (-2,1.5)$) -- ++(4,0)
            node[midway, anchor=south east] {\Large $i$};
        \node[fill=white] at ($(new_orig) + (0.5,1.5)$) {};
        \draw[mid={>}{0.8}] ($(new_orig) + (0.5,-3)$) -- ++(0,6)
            node[near end, yshift=5mm, anchor=west] {\Large $Z$};

    \end{tikzpicture}
    }}
    \caption{Part of the functor $K$ from $G$-center to the cylinder category.}
    \label{cylinder idempotent}
\end{figure}
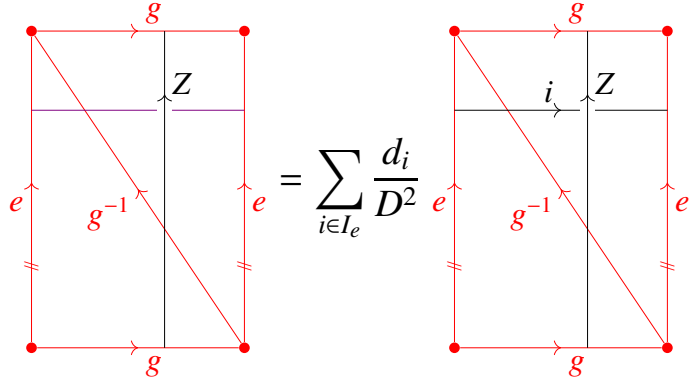
i.e. the crossing is the half-braiding in $\Zsf_G(\Ccal)_g$. The proof, that the functors $F$, $K$ give an equivalence of  categories is equivalent to the one given in \cite[Theorem~6.4]{kirillov2011string}.
\end{proof}

\section{$G$-equivariant String-Nets}

\subsection{Construction of $G$-equivariant String-Net Space}\label{61}

In this section, we demonstrate how to extract structure of the category $\Zsf_G(\Ccal)$ from $G$-equivariant string-nets. To this end, we have to be able to assign to each connected component of the boundary of $\Sigma$ an object $Z\in \Zsf_G(\Ccal)_g$. Given $Z$, consider the $G$-equivariant string-net given by the cylinder 

\begin{figure}[H]
    \centering
    \begin{tikzpicture}
        \def\rx{2}
        \def\ry{0.5}
        \def\rz{6}
        
        \draw (-\rx,0) -- ++(0,\rz);
        \draw (\rx,0) -- ++(0,\rz);
        
        \draw[dashed] (-\rx,0) arc (180:0:\rx cm and \ry cm);
        \draw (-\rx,0) arc (180:360:\rx cm and \ry cm)
            node[pos=0.2, circle, inner sep=0, minimum size=2mm,draw, fill=orange] {};
        
        \draw (-\rx, \rz) arc (180:-180:\rx cm and \ry cm) 
            node[pos=0.9, circle, inner sep=0, minimum size=2mm,draw, fill=orange] {};

        \draw[violet, dashed] (-\rx,0.5*\rz) arc (180:0:\rx cm and \ry cm);
        \draw[violet] (-\rx,0.5*\rz) arc (180:360:\rx cm and \ry cm)
            node[pos=0.5, fill=white] (b) {};
        
        \foreach \x in {b}
        {
            \draw (\x.center) -- ++(0, -0.5*\rz) node[pos=0.5,label=right:{$Z$}] {};
            \draw[mid] (\x.center) -- ++(0, 0.5*\rz);
        }
    
    
    \end{tikzpicture}
\end{figure}

where the purple line is a cloaking circle. This is exactly the idempotent of figure \ref{cylinder idempotent}. Gluing such a cylinder to a boundary component defines an idempotent on the corresponding bare string-net space. 

Given a boundary value $Z_i\in \Zsf_G(\Ccal)$ for each connected component of the boundary, we define the string-net space $\KSNrm^\Ccal(\Sigma;\Zbf)\coloneqq \KSNrm^\Ccal(\Sigma;Z_1,\dots,Z_n)$ for these boundary values as the common image of all these idempotents. 
 
The cylinder category together with the string-net space should be the $(2,1)$-part of functor 
\eq{
\KSNrm^\Ccal:\GBordpzc_\star(3,2,1)\rightarrow \BiModpzc_\Kbb \quad .
}
Thus, we have to discuss how string-net spaces behave under gluing of surfaces. Let $\Sigma$, $\Sigma^\prime$ be composable $1$-morphisms in $\GBordpzc_\star(3,2,1)$, then we need to show
\eq{\label{gluing iso}
\KSNrm^\Ccal(\Sigma\circ \Sigma^\prime;\Zbf,\Zbf^\prime)\simeq\int^{Z\in \Zsf_G(\Ccal)}\KSNrm^\Ccal(\Sigma;\Zbf,Z)\otimes \KSNrm^\Ccal(\Sigma^\prime;Z^\ast,\Zbf^\prime)\, , 
}
where $Z, \, Z^\ast\in \Zsf_G(\Ccal)$ are the boundary values of the boundary components at which $\Sigma$, $\Sigma^\prime$ are glued.  
Let $\Sigma$, $\Sigma^\prime$ be equipped with $G$-triangulations $\Tsf$, $\Tsf^\prime$. The glued surface $\Sigma\circ\Sigma^\prime$ inherits a $G$-triangulation, where the boundary vertices of the individual surfaces are mapped to  new internal vertices. For $\Sigma$ we pick a $G$-triangulation which looks around the gluing boundary like the $G$-triangulation shown in figure \ref{cylinder idempotent}.
 Using the chosen $G$-triangulation on an annular neighborhood around the gluing boundary, the proof of \eqref{gluing iso} is the same as in the non-equivariant case, c.f. \cite[section~5.2]{Goosen}. 

The alert reader may have noticed, that we haven't defined a string-net space on surfaces without boundary components. However, we can consider $(\Sigma^\prime, \zeta^\prime)$, a surface with a single boundary component $b$ and $\zeta|_{b}=e\in G$ the neutral element. Furthermore, $(\Sigma,\zeta)$ is the surface obtained from $(\Sigma^\prime, \zeta^\prime)$ by gluing a disk $D$ at $b$. Note that all closed $G$-surfaces can be obtained in this way and we simply define 
\eq{
\KSNrm^\Ccal(\Sigma)\coloneqq \int^{Z\in \Zsf_G(\Ccal)_e}\KSNrm^\Ccal(D;Z)\otimes \KSNrm^\Ccal(\Sigma;Z^\ast)\, .
}  
We expect that, as in the non-equivariant case, the maps
\eq{
\Cylpzc:(S^1,g)&\rightarrow \Cylpzc_g\\
\KSNrm^\Ccal: (\Sigma,\zeta)&\rightarrow \KSNrm^\Ccal(\Sigma)
}
comprise the $(2,1)$-part of a symmetric monoidal functor $\KSNrm^\Ccal:\GBordpzc_\star(3,2,1)\rightarrow \BiModpzc_\Kbb$.

\subsection{Computations of the $G$-String-Net Space}\label{computations}

In this section we compute the string-net space for different surfaces. We present the computation for a cylinder, a pair of pants and a genus two surface with three boundary components. The cylinder computation shows how to recover the $G$-crossing on $\Zsf_G(\Ccal)$ and the pair of pants gives the monoidal product. The higher genus computation connects our construction to the $G$-equivariant Turaev-Viro construction of \cite{turaev20203}. However, as we are using a slightly different input datum, we cannot get a direct isomorphism between vector spaces for $G$-surfaces. Nevertheless, we get an expression for the string-net space in terms of morphism space in the $G$-center, which is closely related to the result in \cite{turaev20203}. Finally we note that computing the string-net space for all other surfaces is analogous to the procedures presented here. 

\subsubsection{Cylinder}\label{cylinderSection}

We start by computing the string-net space on a cylinder. Since a cylinder exhibits a homotopy between the $G$-bundles on its boundary, the boundary circles have to carry conjugate $G$-labels. We pick an ideal triangulation and note that any string-net is a sum of string-nets of the following type

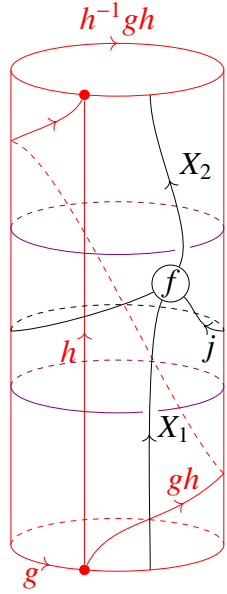
\begin{figure}[H]
\makebox[\textwidth][c]{
    \resizebox{0.7\width}{!}{
    \centering
    \begin{tikzpicture}
        \def\rx{2}
        \def\ry{0.5}
        \def\rz{9}
        
        \draw[red] (-\rx,0) -- ++(0,\rz);
        \draw[red] (\rx,0) -- ++(0,\rz);
        \draw[red, dashed] (-\rx,0) arc (180:0:\rx cm and \ry cm);
        \draw[red, mid={>}{0.2}] (-\rx,0) arc (180:360:\rx cm and \ry cm)
            node[pos=0.2, anchor=north] {\Large $g$}
            node[pos=0.6] (a) {}
            node[pos=0.4, circle, fill=red, inner sep=0, minimum size=2mm] (g1) {};
        \draw[red, mid={>}{0.25}] (-\rx, \rz) arc (180:-180:\rx cm and \ry cm)
            node[pos=0.25, anchor=south] {\Large $h^{-1}gh$}
            node[pos=0.7] (b) {}
            node[pos=0.8, circle, fill=red, inner sep=0, minimum size=2mm] (g2) {};
        \draw[red, mid] (g1) -- (g2) 
            node[midway, anchor=north east] {\Large $h$};
        \draw[red, mid={>}{0.7}] (g1) .. controls ++(0.5,1) and ++(-1,-1) .. (\rx,0.15*\rz)
            node[pos=0.75, anchor=south] {\Large $gh$};
        \draw[red, dashed] (\rx,0.15*\rz) .. controls ++(-1,1) and ++(1,-0.25) .. (-\rx,0.85*\rz);
        \draw[red, mid={>}{0.6}] (-\rx,0.85*\rz) .. controls ++(0.5,0.25) and ++(-0.25,-0.5) .. (g2);
    
        \draw[violet, dashed] (-\rx,1/3*\rz) arc (180:0:\rx cm and \ry cm);
        \draw[violet] (-\rx,1/3*\rz) arc (180:360:\rx cm and \ry cm)
            node[pos=0.6, fill=white] (c1) {};
        \draw[violet, dashed] (-\rx,2/3*\rz) arc (180:0:\rx cm and \ry cm);
        \draw[violet] (-\rx,2/3*\rz) arc (180:360:\rx cm and \ry cm)
            node[pos=0.71, fill=white] (c1) {};
        
        \node[circle, inner sep=0, minimum size=7mm, draw] (f) at (0.5*\rx, 0.55*\rz) {\Large $f$};
        \draw[mid] (a.center) .. controls ++(0,1) and ++(-0.5,-1) .. (f)
            node[midway, anchor=west] {\Large $X_1$};
        \draw[mid] (f) .. controls ++(0.5,1) and ++(0,-1) .. (b.center)
            node[midway, anchor=south west] {\Large $X_2$};
        \draw[mid={<}{0.75}] (f) .. controls ++(0.5,-0.5) and ++(-0.5,0) .. (\rx,0.45*\rz)
            node[pos=0.75, anchor=north] {\Large $j$};
        \draw[dashed] (-\rx,0.45*\rz) arc (180:0:\rx cm and \ry cm);
        \draw (-\rx,0.45*\rz) .. controls ++(0.5,0) and ++(-1,-0.5) .. (f);
        
    \end{tikzpicture}
    }
    }
    \caption{Basic string-net on a cylinder.}
    \label{basic string net cylinder}
\end{figure}

where $j\in I_h(\Ccal)$ for any $h\in G$. We denote the cylinder with lower/ upper boundary labeled by $g$/ $h^{-1}gh$ by $\prescript{}{g}{C}_{h^{-1}gh}$. From now on, we will not show the ideal triangulation in the arguments. Pictures get overloaded fast and besides establishing a useful presentation for string-nets, an ideal triangulation doesn't serve any further purpose.

\begin{prop}\label{cyl hom space} For $X_1\in \Zsf_G(\Ccal)_g$ and $X_2\in \Zsf_G(\Ccal)_{h^{-1}gh}$ we have 
\eq{
\KSNrm^\Ccal\lb \prescript{}{g}{C}_{h^{-1}gh};X_1,X_2 \rb\simeq \hom_{\Zsf(\Ccal)}(\phi_h(X_1),X_2)\,,
}
where $\phi_h$ is the $G$-crossing on $\Zsf_G(\Ccal)$ from Proposition \ref{new crossing}. In this way $G$-equivariant string-nets encode the $G$-action on $\Zsf_G(\Ccal)$.
\end{prop}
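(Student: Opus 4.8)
The plan is to compute the left-hand side directly from a convenient $G$-triangulation of the cylinder and then match the result against the definition of $\phi_h$ from Proposition \ref{new crossing}. First I would fix the $G$-triangulation of $\prescript{}{g}{C}_{h^{-1}gh}$ shown in Figures \ref{radial direction} and \ref{basic string net cylinder}, which has no internal marked points and a single radial edge labeled $h$; the boundary labels $g$ and $h^{-1}gh$ together with the cyclic face condition \eqref{cyclic face} then determine the $G$-labels of the remaining edges. By Lemma \ref{fat graph basis}, the bare string-net space relative to this triangulation carries a fat graph basis, and since the radial edge is labeled $h$, every basis string-net must cross it along an edge colored by a simple object $j\in I_h$. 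Hence a general element is a sum of the basic string-nets of Figure \ref{basic string net cylinder}, each determined by a choice of $j\in I_h$ together with a single coupon $f$.

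Next I would impose the two boundary values. Gluing the defining cylinders-with-cloaking-circle for $X_1\in\Zsf_G(\Ccal)_g$ and $X_2\in\Zsf_G(\Ccal)_{h^{-1}gh}$ projects onto the image of the corresponding idempotents of Figure \ref{cylinder idempotent}, and the effect of each cloaking circle is precisely the half-braiding $\gamma_{X_i,\bullet}$ on the neutral component $\Ccal_e$. The central step is then to read off, from the graphical calculus in $\Ccal$, the space of coupons $f$ that survive these projections. Summing over $j\in I_h$ and recalling that $I^h(X_1)=\bigoplus_{j\in I_h} j^\ast\otimes X_1\otimes j$, the surviving coupons assemble into morphisms $I^h(X_1)\to X_2$ in $\Ccal$ that are fixed by the idempotent $\pi_{X_1}$ and compatible with the half-braiding of $X_2$.

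By definition $P(X_1)=\Imrm(\pi_{X_1})$ underlies $\phi_h(X_1)$, and the half-braiding $\gamma_{P(X_1),\bullet}$ constructed in Proposition \ref{new crossing} is exactly the one induced on these surviving coupons by the cloaking circle on the $X_1$-boundary. Therefore the space of admissible $f$ is canonically identified with $\hom_{\Zsf_G(\Ccal)}\lb\phi_h(X_1),X_2\rb$, which yields the claimed isomorphism. Structurally this runs parallel to the proof of the preceding proposition on the cylinder category (and to \cite[Theorem~6.4]{kirillov2011string}); the only genuinely new ingredient is the $h$-twist in the radial direction, which is what replaces the induction functor $I$ by $I^h$ and thereby produces the crossing $\phi_h$ rather than the identity.

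The main obstacle I expect is the bookkeeping in this central computation: one must verify that the two cloaking circles, together with the local relations used to pull the $j$-colored edge around the cylinder through the twisted radial edge, reproduce on the nose the idempotent $\pi_{X_1}$ and the half-braiding $\gamma_{P(X_1),\bullet}$ defining $\phi_h(X_1)$, rather than a reordered or conjugated variant. As noted in the remark following Proposition \ref{new crossing}, our choice of $G$-crossing was tailored to this very cylinder computation, so the matching should come out cleanly; nonetheless, confirming the precise ordering of tensor factors and the placement of the half-braidings is the delicate point, and it is where I would spend the most care.
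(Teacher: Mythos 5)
Your proposal is correct and follows essentially the same route as the paper's proof: the same radial-edge $G$-triangulation with the crossing edge colored by $j\in I_h$, the boundary idempotents implemented by cloaking circles, and the identification of the surviving data with morphisms out of $I^h(X_1)$ fixed by $\pi_{X_1}$ and compatible with the half-braiding of $X_2$, i.e.\ with $\hom_{\Zsf_G(\Ccal)}(\phi_h(X_1),X_2)$. The paper carries out the ``bookkeeping'' you defer by writing down two explicit mutually inverse maps $F$ and $G$ and verifying $G\circ F=\Idrm$ (using the idempotent for $X_2$ and the completeness relation) and $F\circ G=\Idrm$ (using the definition of the half-braiding $\gamma_{\phi_h(X_1),\bullet}$ and the fact that $f'$ already lies in the $G$-center).
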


\begin{proof}
We can use the lower cloaking circle in figure \ref{basic string net cylinder} and the completeness relation to bring the $j$-labeled line and the other cloaking circle to the front

    \begin{equation}\label{cylinder basic form}
    \begin{minipage}[c]{0.2\textwidth}
    \makebox[\textwidth][c]{
    \resizebox{0.7\width}{!}{
    \centering

        };
    \end{tikzpicture}
    }
    }
\end{figure}
Equality $(1)$ is the definition of the half-braiding $\gamma_{\phi_h(X_1),\bullet}$. $(2)$ holds, because $f'$ is already a morphism in the $G$-center.

\end{proof}

\subsubsection{Pair of Pants}

Similar to the cylinder, a pair of pants should equip the cylinder category with additional structure which in this case is the functor underlying a monoidal product. As a second consistency check, we prove that the equivariant string-net construction really induces the monoidal product in $\Zsf_G(\Ccal)$. We use the symbol $P_{g,h}^{gh}$ for the following surface with the shown boundary holonomies

\begin{figure}[H]
\makebox[\textwidth][c]{
    \resizebox{0.5\textwidth}{!}{
    \centering
    \def\rx{2}
    \def\ry{0.5}
    

    }
    }
\end{figure}

In the figure, the $G$-triangulation is shown in red. Its bounding octagon gives a polygonal decomposition of a genus $2$ surface, and the edges of the polygon are labeled with group elements $\alpha$, $\beta$, $\gamma$ and $\delta$. Boundary edges have $G$-label $b_1$, $b_2$ and $b_3$. In black we show a string-net on the surface, where we already collapsed all planar graphs inside faces of the $G$-triangulation to one-vertex graphs with labels $\phi_i$. The edges labeled by $\alpha,\beta,\gamma,\delta$ and $b_1,b_2,b_3$ generate the first homology group of $\Sigma$. By Lemma \ref{fat graph basis} we have a basis for $\KSNrm^\Ccal(\Sigma;\Bbf)$ spanned by equivalence classes of above graphs, with edges labeled by compatible simple objects.
 
We pick an arbitrary but fixed maximal tree $T$ in the above fat graph underlying the string-net. Such a tree has to lie inside a disk and we can collapse all its edges using the local relations. This yields a basis of $\KSNrm^\Ccal(\Sigma;\Bbf)$ in terms of (equivalence classes of) graphs of the form
\begin{figure}[H]
    \makebox[\textwidth][c]{
    \resizebox{0.55\textwidth}{!}{
    \begin{tikzpicture}
        \def\d{12cm}
        \def\r{6cm}
        \node[circle, inner sep=0, minimum size=\d] (circ) at (0,0) {};
        \foreach \i in {0,45,90,135,180,225,270,315}{
            \node[circle, fill=red, inner sep=0, minimum size=2mm] (v\i) at (circ.\i+22.5) {};
        };
        \draw[mid, red] (v45) -- (v90)
            node[red, anchor=south, pos=0.55, yshift=1mm] {\Large $\alpha$}
            node[anchor=center, pos=0.65] (black60) {};
        \draw[mid, red] (v90) -- (v135)
            node[red, anchor=south east, pos=0.55] {\Large $\beta$}
            node[anchor=center, pos=0.25] (black120) {};
        \draw[mid, red] (v180) -- (v135)
            node[red, anchor=east, pos=0.55] {\Large $\alpha^{-1}$}
            node[anchor=center, pos=0.85] (black150) {};
        \draw[mid, red] (v225) -- (v180)
            node[red, anchor=north east, pos=0.55] {\Large $\beta^{-1}$}
            node[anchor=center, pos=0.85] (black210) {};
        \draw[mid, red] (v225) -- (v270)
            node[red, anchor=north, pos=0.55, yshift=-1mm] {\Large $\gamma$}
            node[anchor=center, pos=0.85] (black250) {};
        \draw[mid, red] (v270) -- (v315)
            node[red, anchor=north west, pos=0.55] {\Large $\delta$}
            node[anchor=center, pos=0.85] (black300) {};
        \draw[mid, red] (v0) -- (v315)
            node[red, anchor=west, pos=0.55] {\Large $\gamma^{-1}$}
            node[anchor=center, pos=0.15] (black330) {};
        \draw[mid, red] (v45) -- (v0)
            node[red, anchor=south west, pos=0.55] {\Large $\delta^{-1}$}
            node[anchor=center, pos=0.15] (black30) {};
        
        \draw let
            \p1 = (v45),
            \p2 = (v0),
            \p3 = (v90),
            \p4 = (v135)
        in
            node[circle, 
                 fill=red, 
                 inner sep=0, 
                 minimum size=2mm,
                 label={[text=red, font=\Large, yshift=-1cm]$b_3$}] 
                 (b3) at (\x1,\y2) {}
            node[circle, 
                 fill=red, 
                 inner sep=0, 
                 minimum size=2mm,
                 label={[text=red, font=\Large, yshift=-1cm]$b_2$}] (b2) at (\x3,\y4) {};
        
        \draw[red] (v225.center) .. controls ++(1.85,1.5) and ++(-1.5,2) .. (v225.center)
            node[anchor=center, pos=0.5, 
            label={[text=red, font=\Large, yshift=-1cm]$b_1$}] (bound1) {};
        \node[circle, draw, anchor=90, red, inner sep=0, minimum size=1cm] (bound2) at (b2.center) {};
        \node[circle, draw, anchor=90, red, inner sep=0, minimum size=1cm] (bound3) at (b3.center) {};

        \foreach \i in {0,45,90,135,180,270,315}{
            \centerarc[violet](v\i)(\i-90:\i-225:0.5cm);
        };
        \draw[violet] ($(v225.center) + (1.5,0)$) arc (0:135:1.5cm)
            node[anchor=center, fill=white, pos=0.625] {};
        \node[violet,
              draw,
              circle,
              inner sep=0,
              anchor=center,
              minimum size=1.5cm] (bc2) at (bound2.center) {};
        \node[violet,
              draw,
              circle,
              inner sep=0,
              anchor=center,
              minimum size=1.5cm] (bc3) at (bound3.center) {};
        \node[anchor=center, fill=white] at (bc2.357) {};
        \node[anchor=center, fill=white] at (bc3.180) {};
            
        \draw[red] (v90.300) to[bend left=10] (b2.90);
        \draw[red] (v45.210) to[bend left=0] (b2.45);
        \draw[red] (v135.0) to[bend left=10] (b2.135);
        \draw[red] (v180.45) .. controls ++(1,1) and ++(-1.5,-0.25) .. (b2.180);
        \draw[red] (v225.center) .. controls ++(-1,1) and ++(-2.5,-1) .. (b2.180);
        \draw[red] (v225.center) .. controls ++(2.45,1.25) and ++(-2.25,-1) .. (b2.180);
        \draw[red] (v225.30) .. controls ++(2.5,1) and ++(2.5,-1) .. (b2.0);
        \draw[red] (b2.45) to[bend left=10] (b3.135);
        \draw[red, rounded corners=0.1] (b2.0) .. controls ++(1,0) and ++(-2.5,0) .. ($(b3) + (0,-1.5)$)
            .. controls ++(1,0) and ++(1,0) .. (b3.center);
        \draw[red] (v45.270) to[bend left=5] (b3.90);
        \draw[red] (v0.center) .. controls ++(-1,0) and ++(0.5,0.5) .. (b3.60);
        \draw[red] (v315.center) .. controls ++(-1,1) and ++(1.5,0.5) .. (b3.45);
        \draw[red] (v270.90) .. controls ++(-0.25,1) and ++(2.5,0.25) .. (b3.30);
        \draw[red] (v225.15) ..controls ++(2,0.5) and ++(2.5,0) ..(b3.15);
    
        \node[fill=white,
              circle, 
              anchor=center,
              align=center,
              inner sep=0,
              text width=5mm,
              draw]
              (psi) at ($(circ.center) + (80:0.325*\r)$) {\Large $\psi$};
    
        \draw[mid] (bound1.center) .. controls ++(0,1) and ++(0,-3) .. (psi.270)
            node[anchor=east, pos=0.55] {$X_1$};
        \draw[mid={>}{0.75}] (bound2.0) to[bend right=5]
            node[anchor=south, pos=0.75] {$X_2$} (psi.190);
        \draw[mid={>}{0.75}] (bound3.180) to[bend right=-5]
            node[anchor=south, pos=0.5] {$X_3$} (psi.350);
        
        \draw[mid={>}{0.15}] (psi.345) .. controls ++(3.5,-3.5) and ++(3.5,3.5) .. (psi.15)
            node[anchor=north, pos=0.035] {$i_6$};
        \draw[mid={<}{0.25}] (psi.210) .. controls ++(-4.5,-3.5) and ++(-4.5,3.5) .. (psi.150)
            node[anchor=north, pos=0.15] {$i_5$};
            
        \draw[mid={>}{0.25}] (black60.center) .. controls ++(0,-1) and ++(0,0.5) .. (psi.90)
            node[anchor=west, pos=0.25] {$i_2$};
        \draw[mid={>}{0.5}] (black120.center) .. controls ++(1,0) and ++(0,0.5) .. (psi.105)
            node[anchor=south, pos=0.5] {$i_4$};
        \draw[mid={<}{0.5}] (black150.center) .. controls ++(1,0) and ++(-3.5,3.5) .. (psi.120)
            node[anchor=south, pos=0.5] {$i_2$};
        \draw[mid={<}{0.5}] (black210.center) .. controls ++(1,1) and ++(-5,4) .. (psi.135)
            node[anchor=east, pos=0.5] {$i_4$};
        \draw[mid={>}{0.5}] (black30.center) .. controls ++(-1,-1) and ++(0,3) .. (psi.75)
            node[anchor=north, pos=0.5] {$i_1$};
        \draw[mid={<}{0.5}] (black330.center) .. controls ++(-1,0) and ++(2,4) .. (psi.60)
            node[anchor=north, pos=0.5] {$i_3$};
        \draw[mid={<}{0.5}] (black300.center) .. controls ++(-1,0.5) and ++(4,4.5) .. (psi.45)
            node[anchor=west, pos=0.5] {$i_1$};
        \draw[mid={>}{0.5}] (black250.center) .. controls ++(0,0.5) and ++(5.6,4.75) .. (psi.30)
            node[anchor=north east, pos=0.475] {$i_3$};
    \end{tikzpicture}
    }
    }
\end{figure}
where the $i_n$'s are simple objects. We will not show the $G$-triangulation anymore from now on. We can isotope the cloaking circle around the first boundary component to run parallel to the fat graph. The resulting string-net, drawn on $\Sigma$, then looks as follows
\begin{figure}[H]
    \centering
    \makebox[\textwidth][c]{
    \resizebox{0.5\width}{!}{
    \begin{tikzpicture}
    
        \begin{scope}[on background layer]
            \draw  plot[smooth cycle, thick, tension=0.75] coordinates {(-6,-3) (0,-3) (6,-3) (6,0) (5,5) (2,6) (-2,6) (-5,5) (-6,0)};
        \end{scope}
        
        
        \foreach \i/\n/\x/\y in 
            {
            1/LH/-3/1.5, 
            2/RH/3/1.5}
            {
            \node[inner sep=-0.75mm, anchor=center] (\n) at (\x,\y) {
                \resizebox{0.75\width}{0.25\height}{
                \begin{tikzpicture}
                    
                        \FPset\rbig{2}
                        \FPset\rsmol{1}
                        
                        \FPset\initang{-30}
                        \FPset\finang{-150}
                        \FPeval\rang{round(abs((\initang - \finang) / 2), 0)}
                        
                        \FPeval\dsmol{\rsmol * (1-cos(\rang * \FPpi / 180))}
                        \FPeval\dbig{\rbig - root(2, (\rbig^2 - \rsmol^2 * sin(\rang * \FPpi / 180 )^2))}
                        \FPeval\hsep{(\rsmol - \dsmol) + (\rbig - \dbig)}
                        
                        \node (c) at (0,0) {};
                        \centerarc[thick](c)(\initang:\finang:\rbig cm);
                        \centerarc[thick, transform canvas={yshift=-\hsep cm}](c)(-\initang:-\finang:\rsmol cm);
                    
                    \end{tikzpicture}
                    }
            };
            
            \node[anchor=center] (\n_m) at (\n.south) {};
            \node[anchor=center] (\n_r) at ($(\n.south) + (1,0.125)$) {};
            \node[anchor=center] (\n_l) at ($(\n.south) + (-1,0.125)$) {};
        };
        
        \foreach \i\n/\x/\y/\a/\sx/\sy in 
            {
            1/BL/-4/6/30/0.5/0.5, 
            2/BM/0/7/0/0.5/0.5, 
            3/BR/4/6/-30/0.5/0.5}
            {
            \node[anchor=center, inner sep=0, rotate=\a] (\n) at (\x,\y) {
                \begin{tikzpicture}[xscale=\sx,yscale=\sy]
                    
                    
                    \fill[white] (0,0) arc (0:180:2 and 0.5)
                    .. controls ++(0,-1) and ++(1,2) .. (-5,-4)
                    -- (1,-4)
                    .. controls ++(-1,2) and ++(0,-1) .. (0,0);
                    
                    \draw[red] (0,0) arc (0:180:2 and 0.5) coordinate (p);
                    \draw (p) .. controls ++(0,-1) and ++(1,2) .. (-5,-4)
                        node[pos=0.5, anchor=center] (upl) {}
                        node[pos=0.85, anchor=center] (\n_qupl) {}
                        node[pos=0.99, anchor=center] (\n_qupl_purp) {};
                    \draw (1,-4) .. controls ++(-1,2) and ++(0,-1) .. (0,0)
                        node[pos=0.5, anchor=center] (upr) {}
                        node[pos=0.15, anchor=center] (\n_qupr) {}
                        node[pos=0.01, anchor=center] (\n_qupr_purp) {};
                    \draw[red] (0,0) arc (0:-180:2 and 0.5);
                    
                    \ifthenelse{\i=2}
                    {
                        \draw[dashed, violet] (\n_qupl_purp.center) .. controls ++(0,4/3*0.5) and ++(0,4/3*0.5) .. (\n_qupr_purp.center);
                    }{
                        \draw[violet] (upl.center) .. controls ++(0,-4/3*0.5) and ++(0,-4/3*0.5) .. (upr.center)
                            node[anchor=center, inner sep=0, minimum size=2mm, fill=white, pos=0.5] {};
                        \draw[violet, dashed] (upl.center) .. controls ++(0,4/3*0.5) and ++(0,4/3*0.5) .. (upr.center);
                        \draw[dashed] (\n_qupl.center) .. controls ++(0,4/3*0.5) and ++(0,4/3*0.5) .. (\n_qupr.center);
                        \draw[violet, dashed] (\n_qupl_purp.center) .. controls ++(0,4/3*0.5) and ++(0,4/3*0.5) .. (\n_qupr_purp.center);
                    }
                    
                \end{tikzpicture}
            };
            \node[anchor=center] (\n_out) at ($(\n.north) + (\a-90:0.5)$) {};
            \node[anchor=center, rotate=\a] at ($(\n.north) + (\a+90:0.25)$) {\Large $X_{\i}$};
        };
        
        \node[anchor=center] (LH_m_bot) at ($(LH.south) + (0,-4.35)$) {};
        \node[anchor=center] (LH_r_bot) at ($(LH.south) + (1,-4.25)$) {};
        \node[anchor=center] (LH_l_bot) at ($(LH.south) + (-1,-4.4)$) {};
        \node[anchor=center] (RH_m_bot) at ($(RH.south) + (0,-4.35)$) {};
        \node[anchor=center] (RH_r_bot) at ($(RH.south) + (1,-4.4)$) {};
        \node[anchor=center] (RH_l_bot) at ($(RH.south) + (-1,-4.25)$) {};
        
        \node[anchor=center] (BL_purp_l) at ($(BL.south west) + (0,0.05)$){};
        \node[anchor=center] (BL_purp_r) at ($(BL.south east) + (-0.075,0.05)$){};
        \node[anchor=center] (BL_blac_l) at ($(BL.south west) + (-0.05,0.5)$){};
        \node[anchor=center] (BL_blac_r) at ($(BL.south east) + (-0.5,0.35)$){};

        \node[anchor=center] (BR_purp_l) at ($(BR.south west) + (0.05,0)$){};
        \node[anchor=center] (BR_purp_r) at ($(BR.south east) + (0,0.05)$){};
        \node[anchor=center] (BR_blac_l) at ($(BR.south west) + (0.35,0.25)$){};
        \node[anchor=center] (BR_blac_r) at ($(BR.south east) + (0.05,0.525)$){};
        
        \node[anchor=center] (BM_purp_l) at ($(BM.south west) + (0.05,0.05)$){};
        \node[anchor=center] (BM_purp_r) at ($(BM.south east) + (-0.05,0.1)$){};

    \node[fill=white,
          circle, 
          anchor=center, 
          inner sep=0, 
          minimum size=6mm, 
          draw] 
          (main) at (0,1.5) {\Large $\psi$};
    
        \foreach \n/\dx/\dy in 
        {
            LH/-0.5/0, 
            RH/0.5/0}
        {
            \draw[dashed, violet] (\n_l.center) .. controls ++(\dx,\dy) and ++(\dx,-\dy) .. (\n_l_bot);
            \draw[dashed] (\n_m.center) .. controls ++(\dx,\dy) and ++(\dx,-\dy) .. (\n_m_bot);
            \draw[dashed, violet] (\n_r.center) .. controls ++(\dx,\dy) and ++(\dx,-\dy) .. (\n_r_bot);
        };
        
        \draw[violet, rounded corners=10] 
            (LH_l.center) .. controls ++(1,-2) and ++(-1,-0.5) .. (-1,0.5)
                          .. controls ++(-1,-2) and ++(0,-2) .. ($(LH.west) + (-0.25,0)$)
                          .. controls ++(2,2) and ++(2,-0.5) .. (LH_r.center);
        
        \draw[violet, rounded corners=10]
            (LH_l_bot.center) .. controls ++(2,1) and ++(-0.5,-0.5) .. (-1,-1)
                              .. controls ++(-1.5,-1) and ++(0,-4) .. ($(LH.west) + (-1.25,0)$)
                              .. controls ++(1.5,3) and ++(0.5,-1) .. (BL_purp_l.center);

        \draw[violet, rounded corners=10]
            (BL_purp_r.center) .. controls (-2.5,5) and (-0.5,5) .. (BM_purp_l.center);
        
        \draw[violet, rounded corners=10]
            (BM_purp_r.center) .. controls (0.5,5) and (2.5,5) .. (BR_purp_l.center);

        \draw[violet, rounded corners=10] 
            (RH_r.center) .. controls ++(-1,-2) and ++(1,-0.5) .. (1,0.5)
                          .. controls ++(1,-2) and ++(0,-2) .. ($(RH.east) + (0.25,0)$)
                          .. controls ++(-2,2) and ++(-2,-0.5) .. (RH_l.center);
        
        \draw[violet, rounded corners=10]
            (RH_r_bot.center) .. controls ++(-2,1) and ++(0.5,-0.5) .. (1,-1)
                              .. controls ++(1.5,-1) and ++(0,-4) .. ($(RH.east) + (1.25,0)$)
                              .. controls ++(-1.5,3) and ++(-0.5,-1) .. (BR_purp_r.center);
                              
        \draw[violet, rounded corners=10]
            (LH_r_bot.center) .. controls ++(3,3.5) and ++(-3,3.5) .. (RH_l_bot.center);
        
        \foreach \ang/\ax/\ay/\bx/\by/\endpoint/\arr/\arrpos/\lab/\labanchor/\labpos in 
        {
        22.5/9/5/3/-8/main.326.25/>/0.5/i_4/west/0.45,
        45/2/1/-2/0/BR_blac_r.center/>/0.5//center/,
        67.5/1.5/1/-0.5/-1/BR_out.center/</.5//center/,
        90/0.5/1/0.5/-2/BR_blac_l.center/</0.5/i_5/south east/0.5,
        112.5/-0.1/4/0.1/-4/BM_out.center/</0.5//center/,
        135/0/2/-0.5/-1.5/BL_blac_r.center/>/0.5/i_6/south west/0.5,
        157.5/-1/2/1/-2/BL_out.center/</0.5//center/,
        180/-1/0.5/2.5/-0.5/BL_blac_l.center/</0.5//center/,
        202.5/-9/5/-3/-8/main.236.75/</0.5/i_3/east/0.45,
        225/-1/-0.5/1/-0.5/LH_m.center/>/0.5/i_1/north/0.5,
        270/-0.5/-3/1/1/LH_m_bot.center/</0.5//center/,
        295.5/0.5/-3/-1/1/RH_m_bot.center/</0.5//center/,
        0/1/-0.5/-1/-0.5/RH_m.center/>/0.5/i_2/north/0.5}
        {
            \draw[mid={\arr}{\arrpos}] (main.\ang-11.25) .. controls ++(\ax,\ay) and ++(\bx,\by) .. (\endpoint)
                node[anchor=\labanchor, pos=\labpos] {\Large $\lab$};
        }
        
    \end{tikzpicture}
    }}
\end{figure}

We can use the completeness relation several times, to bring the fat graph in the form 
\begin{figure}[H]
    \centering
    \makebox[\textwidth][c]{
    \resizebox{0.5\width}{!}{
    \begin{tikzpicture}
        \node[inner sep=0] (figure) at (0,0) {
            \begin{tikzpicture}
    
        \begin{scope}[on background layer]
            \draw  plot[smooth cycle, thick, tension=0.75] coordinates {(-6,-3) (0,-3) (6,-3) (6,0) (5,5) (2,6) (-2,6) (-5,5) (-6,0)};
        \end{scope}
        
        
        \foreach \i/\n/\x/\y in 
            {
            1/LH/-3/1.5, 
            2/RH/3/1.5}
            {
            \node[inner sep=-0.75mm, anchor=center] (\n) at (\x,\y) {
                \resizebox{0.75\width}{0.25\height}{
                \begin{tikzpicture}
                    
                        \FPset\rbig{2}
                        \FPset\rsmol{1}
                        
                        \FPset\initang{-30}
                        \FPset\finang{-150}
                        \FPeval\rang{round(abs((\initang - \finang) / 2), 0)}
                        
                        \FPeval\dsmol{\rsmol * (1-cos(\rang * \FPpi / 180))}
                        \FPeval\dbig{\rbig - root(2, (\rbig^2 - \rsmol^2 * sin(\rang * \FPpi / 180 )^2))}
                        \FPeval\hsep{(\rsmol - \dsmol) + (\rbig - \dbig)}
                        
                        \node (c) at (0,0) {};
                        \centerarc[thick](c)(\initang:\finang:\rbig cm);
                        \centerarc[thick, transform canvas={yshift=-\hsep cm}](c)(-\initang:-\finang:\rsmol cm);
                    
                    \end{tikzpicture}
                    }
            };
            
            \node[anchor=center] (\n_m) at (\n.south) {};
            \node[anchor=center] (\n_r) at ($(\n.south) + (1,0.125)$) {};
            \node[anchor=center] (\n_l) at ($(\n.south) + (-1,0.125)$) {};
        };
        
        \foreach \i\n/\x/\y/\a/\sx/\sy in 
            {
            1/BL/-4/6/30/0.5/0.5, 
            2/BM/0/7/0/0.5/0.5, 
            3/BR/4/6/-30/0.5/0.5}
            {
            \node[anchor=center, inner sep=0, rotate=\a] (\n) at (\x,\y) {
                \begin{tikzpicture}[xscale=\sx,yscale=\sy]
                    
                    
                    \fill[white] (0,0) arc (0:180:2 and 0.5)
                    .. controls ++(0,-1) and ++(1,2) .. (-5,-4)
                    -- (1,-4)
                    .. controls ++(-1,2) and ++(0,-1) .. (0,0);
                    
                    \draw[red] (0,0) arc (0:180:2 and 0.5) coordinate (p);
                    \draw (p) .. controls ++(0,-1) and ++(1,2) .. (-5,-4)
                        node[pos=0.5, anchor=center] (upl) {}
                        node[pos=0.85, anchor=center] (\n_qupl) {}
                        node[pos=0.99, anchor=center] (\n_qupl_purp) {};
                    \draw (1,-4) .. controls ++(-1,2) and ++(0,-1) .. (0,0)
                        node[pos=0.5, anchor=center] (upr) {}
                        node[pos=0.15, anchor=center] (\n_qupr) {}
                        node[pos=0.01, anchor=center] (\n_qupr_purp) {};
                    \draw[red] (0,0) arc (0:-180:2 and 0.5);
                    
                    \ifthenelse{\i=2}
                    {
                    }{
                        \draw[violet] (upl.center) .. controls ++(0,-4/3*0.5) and ++(0,-4/3*0.5) .. (upr.center)
                            node[anchor=center, inner sep=0, minimum size=2mm, fill=white, pos=0.5] {};
                        \draw[violet, dashed] (upl.center) .. controls ++(0,4/3*0.5) and ++(0,4/3*0.5) .. (upr.center);
                        \draw[dashed] (\n_qupl.center) .. controls ++(0,4/3*0.5) and ++(0,4/3*0.5) .. (\n_qupr.center);
                    }
                    
                \end{tikzpicture}
            };
            \node[anchor=center] (\n_out) at ($(\n.north) + (\a-90:0.5)$) {};
            \node[anchor=center, rotate=\a] at ($(\n.north) + (\a+90:0.25)$) {\Large $X_{\i}$};
        };
        
        \node[anchor=center] (LH_m_bot) at ($(LH.south) + (0,-4.35)$) {};
        \node[anchor=center] (LH_r_bot) at ($(LH.south) + (1,-4.25)$) {};
        \node[anchor=center] (LH_l_bot) at ($(LH.south) + (-1,-4.4)$) {};
        \node[anchor=center] (RH_m_bot) at ($(RH.south) + (0,-4.35)$) {};
        \node[anchor=center] (RH_r_bot) at ($(RH.south) + (1,-4.4)$) {};
        \node[anchor=center] (RH_l_bot) at ($(RH.south) + (-1,-4.25)$) {};
        
        \node[anchor=center] (BL_purp_l) at ($(BL.south west) + (0,0.05)$){};
        \node[anchor=center] (BL_purp_r) at ($(BL.south east) + (-0.075,0.05)$){};
        \node[anchor=center] (BL_blac_l) at ($(BL.south west) + (-0.05,0.5)$){};
        \node[anchor=center] (BL_blac_r) at ($(BL.south east) + (-0.5,0.35)$){};

        \node[anchor=center] (BR_purp_l) at ($(BR.south west) + (0.05,0)$){};
        \node[anchor=center] (BR_purp_r) at ($(BR.south east) + (0,0.05)$){};
        \node[anchor=center] (BR_blac_l) at ($(BR.south west) + (0.35,0.25)$){};
        \node[anchor=center] (BR_blac_r) at ($(BR.south east) + (0.05,0.525)$){};
        
        \node[anchor=center] (BM_purp_l) at ($(BM.south west) + (0.05,0.05)$){};
        \node[anchor=center] (BM_purp_r) at ($(BM.south east) + (-0.05,0.1)$){};

        \node[fill=white,
              circle, 
              anchor=center, 
              inner sep=0, 
              minimum size=6mm, 
              draw] 
              (main) at (0,1.5) {\Large $\psi$};
          
        \node[anchor=center, circle, draw, inner sep=0, minimum size=4mm]
            (a1) at ($(main.center) + (20:3)$) {$\alpha$};
        \node[anchor=center, circle, draw, inner sep=0, minimum size=4mm]
            (a2) at ($(main.center) + (-45:3)$) {$\alpha$};
        \draw[mid] (main.20) .. controls ++(2,0) and ++(-1,0) .. (a1.180)
            node[anchor=north, pos=0.75] {$i_4$};
        \draw[mid] (a1.0) .. controls ++(2,0) and ++(4,0) .. (a2.0)
            node[anchor=west, pos=0.5] {$m_5$};
        \draw[mid] (a2.180) .. controls ++(-1,0) and ++(0.5,0) .. (main.315);
        
        \node[anchor=center, circle, draw, inner sep=0, minimum size=4mm]
            (b1) at ($(main.center) + (-15:2)$) {$\beta$};
        \node[anchor=center, circle, draw, inner sep=0, minimum size=4mm]
            (b2) at ($(main.center) + (-75:2)$) {$\beta$};
        \draw[mid] (main.345) .. controls ++(1,0) and ++(-1,0) .. (b1.180)
            node[anchor=south, pos=0.75] {$i_2$};
        \draw[mid] (b1.340) to[bend right] (RH_m.center);
        \draw[mid={<}{0.5}] (main.285) .. controls ++(0,-0.5) and ++(0,0.5) .. (b2.90);
        \draw[mid] (RH_m_bot.center) .. controls ++(0,1) and ++(0,-1) .. (b2.270)
            node[anchor=north east, pos=0.5] {$m_4$};
        
        \node[anchor=center, circle, draw, inner sep=0, minimum size=4mm]
            (d1) at ($(main.center) + (35:4)$) {$\delta$};
        \node[anchor=center, circle, draw, inner sep=0, minimum size=4mm]
            (d2) at ($(main.center) + (55:4)$) {$\delta$};
        \draw[mid] (main.35) .. controls ++(1,1) and ++(-1,-1) .. (d1.225);
        \draw[mid={<}{0.5}] (main.55) .. controls ++(0,1) and ++(-1,-1) .. (d2.225)
            node[anchor=east, pos=0.5] {$i_5$};
        \draw[mid] (d1.60) .. controls ++(0.5,0.5) and ++(-0.75,-0.25) .. (BR_blac_r.center) 
            node[anchor=north west, pos=0.5] {$m_6$};
        \draw[mid={<}{0.5}] (d2.90) .. controls ++(0,0.5) and ++(0.25,-0.75) .. (BR_blac_l.center);
        
        \node[anchor=center, circle, draw, inner sep=0, minimum size=4mm]
            (c1) at ($(main.center) + (160:3)$) {$\gamma$};
        \node[anchor=center, circle, draw, inner sep=0, minimum size=4mm]
            (c2) at ($(main.center) + (225:3)$) {$\gamma$};
        \draw[mid={<}{0.5}] (main.160) .. controls ++(-2,0) and ++(1,0) .. (c1.0)
            node[anchor=north, pos=0.75] {$i_3$};
        \draw[mid={<}{0.5}] (c1.180) .. controls ++(-2,0) and ++(-4,0) .. (c2.180)
            node[anchor=east, pos=0.5] {$m_2$};
        \draw[mid={<}{0.5}] (c2.0) .. controls ++(1,0) and ++(-0.5,0) .. (main.225);
        
        \node[anchor=center, circle, draw, inner sep=0, minimum size=4mm]
            (e1) at ($(main.center) + (195:2)$) {$\epsilon$};
        \node[anchor=center, circle, draw, inner sep=0, minimum size=4mm]
            (e2) at ($(main.center) + (255:2)$) {$\epsilon$};
        \draw[mid] (main.195) .. controls ++(-1,0) and ++(1,0) .. (e1.0)
            node[anchor=south, pos=0.75] {$i_1$};
        \draw[mid] (e1.200) to[bend left] (LH_m.center);
        \draw[mid={<}{0.5}] (main.255) .. controls ++(0,-0.5) and ++(0,0.5) .. (e2.90);
        \draw[mid] (LH_m_bot.center) .. controls ++(0,1) and ++(0,-1) .. (e2.270)
            node[anchor=north west, pos=0.5] {$m_3$};
        
        \node[anchor=center, circle, draw, inner sep=0, minimum size=4mm]
            (f1) at ($(main.center) + (145:4)$) {$\zeta$};
        \node[anchor=center, circle, draw, inner sep=0, minimum size=4mm]
            (f2) at ($(main.center) + (125:4)$) {$\zeta$};
        \draw[mid] (main.145) .. controls ++(-1,1) and ++(1,-1) .. (f1.315);
        \draw[mid={<}{0.5}] (main.125) .. controls ++(0,1) and ++(1,-1) .. (f2.315)
            node[anchor=west, pos=0.5] {$i_6$};
        \draw[mid] (f1.150) .. controls ++(-0.5,0.5) and ++(0.75,-0.25) .. (BL_blac_l.center) 
            node[anchor=north east, pos=0.5] {$m_1$};
        \draw[mid={<}{0.5}] (f2.90) .. controls ++(0,0.5) and ++(-0.25,-0.75) .. (BL_blac_r.center);
        
        \draw[violet] (b2.0) .. controls ++(0.5,0) and ++(0,-1) .. (a2.270);
        \draw[violet] (a2.90) .. controls ++(0.1,0.75) and ++(-0.1,-0.75) .. (b1.270);
        \draw[violet] (b1.135) .. controls ++(-0.75,0.75) and ++(0,-0.75) .. (a1.270);
        \draw[violet] (a1.90) .. controls ++(0,0.5) and ++(0,-0.5) .. (d1.270);
        \draw[violet] (e2.180) .. controls ++(-0.5,0) and ++(0,-1) .. (c2.270);
        \draw[violet] (c2.90) .. controls ++(-0.1,0.75) and ++(0.1,-0.75) .. (e1.270);
        \draw[violet] (e1.45) .. controls ++(0.75,0.75) and ++(0,-0.75) .. (c1.270);
        \draw[violet] (c1.90) .. controls ++(0,0.5) and ++(0,-0.5) .. (f1.270);
        \draw[violet] (f2.0) .. controls ++(2,-0.75) and ++(-2,-0.75) .. (d2.180)
            node[anchor=center, fill=white, midway, minimum size=2mm] {}
            node[anchor=north, yshift=-1mm, violet, pos=0.25] {$i$};
        
        \draw[mid={<}{0.5}] (main.45) .. controls ++(1,1.5) and ++(-1,-2) .. (BR_out.center);
        \draw[mid={<}{0.5}] (main.135) .. controls ++(-1,1.5) and ++(1,-2) .. (BL_out.center);
        \draw[mid={<}{0.5}] (main.90) .. controls ++(0,2) and ++(0,-2) .. (BM_out.center);
        
        \foreach \n/\dx/\dy in 
        {
            LH/-0.5/0, 
            RH/0.5/0}
        {
            \draw[dashed] (\n_m.center) .. controls ++(\dx,\dy) and ++(\dx,-\dy) .. (\n_m_bot);
        };

        \foreach \ang/\ax/\ay/\bx/\by/\endpoint/\arr/\arrpos/\lab/\labanchor/\labpos in 
        {
        }
        {
        }
        
    \end{tikzpicture}
        };
        \node[anchor=south west] (sum) at (figure.north west){\LARGE $\displaystyle{\sum_{\substack{i \in I_{e}, \\ m_1,\dots,m_6}} \frac{d_i}{D}d_{m_1}d_{m_2}d_{m_3}d_{m_4}d_{m_5}d_{m_6}}$};
    \end{tikzpicture}
    }}
\end{figure}

Note that $\zeta$ fixes the $G$-color of a $G$-triangulation with no internal marked points uniquely. By collapsing a maximal tree, we get a fixed $G$-color of the edges in the one vertex graph. Since the cyclicity condition on vertices is preserved by the local moves, this coloring satisfies
\eq{
h b_3h^{-1}b_1gb_2g^{-1}\gamma\delta\gamma^{-1}\delta^{-1}\alpha\beta\alpha^{-1}\beta^{-1}=e\quad. 
}
where $h$ is the $G$-color of $i_8$ and $g$ the one of $i_5$.
By contracting the maximal tree $T$ of the fat graph, the non-contracted edges constitute generators for $\pi_1(\Sigma)$ based at the single vertex. Choosing another maximal tree, we get a different set of free generators. However, since the $G$-color of the fat graph was uniquely determined, the $G$-color for the generators is also uniquely determined in both cases. It solely depends on the map $\zeta$.

For the tree chosen here, we define an object
\eq{
c_{X_3,X_1,X_2,\mathbf{m}}\coloneqq \phi_h(X_3) \otimes X_1\otimes \phi_g(X_2)\otimes m_5\otimes m_4\otimes m_5^\ast\otimes m_4^\ast \otimes m_3\otimes m_2\otimes m_3^\ast\otimes m_2^\ast\; ,
}
where $m_5\in I_\beta$, $m_4\in I_\alpha$, $m_3\in I_\delta$, $m_2\in I_\gamma$ and $X_i\in \Zsf_G(\Ccal)_{b_i}$ are the boundary values. The object
\eq{\label{c defi}
c\coloneqq \bigoplus_{\mathbf{m}}c_{{X_3,X_1,X_2},\mathbf{m}}
}
has a natural half-braiding

\begin{figure}[H]
    \centering
    \makebox[\textwidth][c]{
    \resizebox{0.6\width}{!}{
    \begin{tikzpicture}
        \def\x{0.95}
        \def\h{3}
        \node[inner sep=0] (lines) at (0,0) {
            \begin{tikzpicture}
                
                \node[anchor=center, circle, inner sep=0, minimum size=4mm] 
                    (n0) at (0,0) {};
                \node[anchor=center, circle, inner sep=0, minimum size=4mm] 
                    (n1) at (\x,0) {};
                \node[anchor=center, circle, inner sep=0, minimum size=4mm] 
                    (n2) at (2*\x,0) {};
                \node[anchor=center, circle, inner sep=0, minimum size=4mm, draw] 
                    (n3) at (3*\x,0) {$\alpha$};
                \node[anchor=center, circle, inner sep=0, minimum size=4mm, draw] 
                    (n4) at (4*\x,0) {$\beta$};
                \node[anchor=center, circle, inner sep=0, minimum size=4mm, draw] 
                    (n5) at (5*\x,0) {$\alpha$};
                \node[anchor=center, circle, inner sep=0, minimum size=4mm, draw] 
                    (n6) at (6*\x,0) {$\beta$};
                \node[anchor=center, circle, inner sep=0, minimum size=4mm, draw] 
                    (n7) at (7*\x,0) {$\gamma$};
                \node[anchor=center, circle, inner sep=0, minimum size=4mm, draw] 
                    (n8) at (8*\x,0) {$\delta$};
                \node[anchor=center, circle, inner sep=0, minimum size=4mm, draw] 
                    (n9) at (9*\x,0) {$\gamma$};
                \node[anchor=center, circle, inner sep=0, minimum size=4mm, draw] 
                    (n10) at (10*\x,0) {$\delta$};
                    
                \foreach \i/\d/\num in {
                    3/>/5,
                    4/>/4,
                    5/</5,
                    6/</4,
                    7/>/3,
                    8/>/2,
                    9/</3,
                    10/</2}
                {
                    \draw[mid={\d}{0.5}] (\i*\x,-\h) node[anchor=north] {\large $\phantom{(}m_{\num}\phantom{)}$} -- (n\i.south);
                    \draw[mid={\d}{0.5}] (n\i.north) -- (\i*\x,\h) node[anchor=south] {\large $\phantom{(}i_{\num}\phantom{)}$};
                };
                
                \draw[violet, mid] (11*\x,-\h) .. controls ++(0,0.5) and ++(1,0) .. (n10.east);
                \draw[violet, mid] (n0.west) .. controls ++(-1,0) and ++(0,-0.5) .. (-\x,\h);
                \foreach \a/\b in {
                    n0.west/n3.west,
                    n3.east/n4.west,
                    n4.east/n5.west,
                    n5.east/n6.west,
                    n6.east/n7.west,
                    n7.east/n8.west,
                    n8.east/n9.west,
                    n9.east/n10.west}
                {
                    \draw[violet] (\a) -- (\b);
                };

                \node[circle, inner sep=0, anchor=center, minimum size=4mm, fill=white] at (n0.center) {};
                \node[circle, inner sep=0, anchor=center, minimum size=4mm, fill=white] at (n1.center) {};
                \node[circle, inner sep=0, anchor=center, minimum size=4mm, fill=white] at (n2.center) {};

                \foreach \i/\n in {
                    0/\varphi_h(X_3), 
                    1/\phantom{(}X_1\phantom{)}, 
                    2/\varphi_h(X_2)}
                {
                    \draw[mid={>}{0.25}] (\i*\x,-\h) node[anchor=north] {\large $\n$} -- (\i*\x,\h);
                };

            \end{tikzpicture}
           
        };
        \node[anchor=south west] (sum) at (lines.north west) {\LARGE $\displaystyle{\sum_{\substack{m_2,m_3,\\ m_4,m_5}}d_{m_2}d_{m_3}d_{m_4}d_{m_5}}$};
    \end{tikzpicture}
     }
            }
\end{figure}
and thus can be seen as an object $(c,\sigma)\in \Zsf_G(\Ccal)$ (\cite[section~10.4]{turaev20203}).

The computation is now similar to the one for the cylinder. We define  a linear map
\begin{align*}
\hom_{\Zsf_G(\Ccal)}(\onebb,c)\longrightarrow \KSNrm^\Ccal(\Sigma;X_1,X_2,X_3)
\end{align*}
which acts on $\Upsilon\in \hom_{\Zsf_G(\Ccal)}(\onebb,c)$ by 
\begin{figure}[H]
    \centering
    \makebox[\textwidth][c]{
    \resizebox{0.6\width}{!}{
    \def\x{8}
    \def\y{1}
    \def\d{2}
    \def\h{1}
    \FPeval\s{1 / 16}
    \FPeval\t{2 * \s * \x + 0.5}
    \begin{tikzpicture}
        \node (strings) at (0,0){
            \begin{tikzpicture}
                \node (morphism) at (0,0) {
                    \centering
                    \makebox[\textwidth][c]{
                    \resizebox{\StringScale\width}{!}{

                    \begin{tikzpicture}
                        
                        \node[draw, anchor=center, inner sep=0, minimum height=\y cm, minimum width=\x cm] 
                            (psi) at (0,0) {\LARGE $\Upsilon$};
                        
                        \foreach \i/\dir in 
                        {
                        1/<,
                        2/<,
                        3/<,
                        6/>,
                        7/>,
                        8/<,
                        9/<,
                        12/<,
                        13/<,
                        14/>,
                        15/>}
                        {
                            \FPeval\loc{\i * \s}
                            \ifthenelse{\i=1}
                            {
                                \draw[mid={\dir}{0.5}] ($(psi.north west)!\loc!(psi.north east)$) -- ++(0,2*\h) node[anchor=south east, xshift=3mm] {$\varphi_{h_1}(X_3)$};
                            }
                            {
                                \ifthenelse{\i=2}
                                {
                                    \draw[mid={\dir}{0.5}] ($(psi.north west)!\loc!(psi.north east)$) -- ++(0,2*\h) node[anchor=south] {$X_1$};
                                }
                                {
                                    \ifthenelse{\i=3}
                                    {
                                        \draw[mid={\dir}{0.5}] ($(psi.north west)!\loc!(psi.north east)$) -- ++(0,2*\h) node[anchor=south west, xshift=-3mm] {$\varphi_{h_2}(X_2)$};
                                    }
                                    {
                                        \draw[mid={\dir}{0.5}] ($(psi.north west)!\loc!(psi.north east)$) -- ++(0,2*\h);
                                    };
                                };
                            };
                        };

                    \end{tikzpicture}}}
                };
            \end{tikzpicture}
            
        };
        \node[anchor=north] (pic) at ($(strings.south) + (0,-\d)$){
            
            \begin{tikzpicture}
                \node[inner sep=0] (figure) at (0,0) {
                    \centering
                    \makebox[\textwidth][c]{
                    \resizebox{\SurfScale\width}{!}{

                    \begin{tikzpicture}
            
                        \begin{scope}[on background layer]
                            \draw  plot[smooth cycle, thick, tension=0.75] coordinates {(-6,-3) (0,-3) (6,-3) (6,0) (5,5) (2,6) (-2,6) (-5,5) (-6,0)};
                        \end{scope}
                        
                        
                        \foreach \i/\n/\x/\y in 
                            {
                            1/LH/-3/1.5, 
                            2/RH/3/1.5}
                            {
                            \node[inner sep=-0.75mm, anchor=center] (\n) at (\x,\y) {
                                \resizebox{0.75\width}{0.25\height}{
                                \begin{tikzpicture}[]
                                    
                                        \FPset\rbig{2}
                                        \FPset\rsmol{1}
                                        
                                        \FPset\initang{-30}
                                        \FPset\finang{-150}
                                        \FPeval\rang{round(abs((\initang - \finang) / 2), 0)}
                                        
                                        \FPeval\dsmol{\rsmol * (1-cos(\rang * \FPpi / 180))}
                                        \FPeval\dbig{\rbig - root(2, (\rbig^2 - \rsmol^2 * sin(\rang * \FPpi / 180 )^2))}
                                        \FPeval\hsep{(\rsmol - \dsmol) + (\rbig - \dbig)}
                                        
                                        \node (c) at (0,0) {};
                                        \centerarc[thick](c)(\initang:\finang:\rbig cm);
                                        \centerarc[thick, transform canvas={yshift=-\hsep cm}](c)(-\initang:-\finang:\rsmol cm);
                                    
                                    \end{tikzpicture}
                                    }
                            };
                            
                            \node[anchor=center] (\n_m) at (\n.south) {};
                            \node[anchor=center] (\n_r) at ($(\n.south) + (1,0.125)$) {};
                            \node[anchor=center] (\n_l) at ($(\n.south) + (-1,0.125)$) {};
                        };
                        
                        \foreach \i\n/\x/\y/\a/\sx/\sy in 
                            {
                            1/BL/-4/6/30/0.5/0.5, 
                            2/BM/0/7/0/0.5/0.5, 
                            3/BR/4/6/-30/0.5/0.5}
                            {
                            \node[anchor=center, inner sep=0, rotate=\a] (\n) at (\x,\y) {
                                \begin{tikzpicture}[xscale=\sx,yscale=\sy]
                                    
                                    
                                    \fill[white] (0,0) arc (0:180:2 and 0.5)
                                    .. controls ++(0,-1) and ++(1,2) .. (-5,-4)
                                    -- (1,-4)
                                    .. controls ++(-1,2) and ++(0,-1) .. (0,0);
                                    
                                    \draw[red] (0,0) arc (0:180:2 and 0.5) coordinate (p);
                                    \draw (p) .. controls ++(0,-1) and ++(1,2) .. (-5,-4)
                                        node[pos=0.5, anchor=center] (upl) {}
                                        node[pos=0.85, anchor=center] (\n_qupl) {}
                                        node[pos=0.99, anchor=center] (\n_qupl_purp) {};
                                    \draw (1,-4) .. controls ++(-1,2) and ++(0,-1) .. (0,0)
                                        node[pos=0.5, anchor=center] (upr) {}
                                        node[pos=0.15, anchor=center] (\n_qupr) {}
                                        node[pos=0.01, anchor=center] (\n_qupr_purp) {};
                                    \draw[red] (0,0) arc (0:-180:2 and 0.5);
                                    
                                    \ifthenelse{\i=2}
                                    {
                                    }{
                                        \draw[dashed] (\n_qupl.center) .. controls ++(0,4/3*0.5) and ++(0,4/3*0.5) .. (\n_qupr.center);
                                    }
                                \end{tikzpicture}
                            };
                            \node[anchor=center] (\n_out) at ($(\n.north) + (\a-90:0.5)$) {};
                            \node[anchor=center, rotate=\a] at ($(\n.north) + (\a+90:0.25)$) {\Large $X_{\i}$};
                        };
                        
                        \node[anchor=center] (LH_m_bot) at ($(LH.south) + (0,-4.35)$) {};
                        \node[anchor=center] (RH_m_bot) at ($(RH.south) + (0,-4.35)$) {};
                        
                        \node[anchor=center] (BL_blac_l) at ($(BL.south west) + (-0.05,0.5)$){};
                        \node[anchor=center] (BL_blac_r) at ($(BL.south east) + (-0.5,0.35)$){};
                
                        \node[anchor=center] (BR_blac_l) at ($(BR.south west) + (0.35,0.25)$){};
                        \node[anchor=center] (BR_blac_r) at ($(BR.south east) + (0.05,0.525)$){};
                        
                        \foreach \n/\dx/\dy in 
                        {
                            LH/-0.5/0, 
                            RH/0.5/0}
                        {
                            \draw[dashed] (\n_m.center) .. controls ++(\dx,\dy) and ++(\dx,-\dy) .. (\n_m_bot);
                        };

                        \node[fill=white,
                              circle, 
                              anchor=center, 
                              inner sep=0, 
                              minimum size=15mm, 
                              draw] 
                              (main) at (0,1.5) {\LARGE $\Upsilon$};
                          
                        \foreach \i/\ang/\ax/\ay/\bx/\by/\endpoint/\arr in 
                        {
                        1/22.5/9/5/3/-8/main.326.25/>,
                        5/112.5/-0.1/4/0.1/-4/BM_out.center/<,
                        9/202.5/-9/5/-3/-8/main.236.75/<,
                        10/225/-1/-0.5/1/-0.5/LH_m.center/>,
                        11/270/-0.5/-3/1/1/LH_m_bot.center/<,
                        12/295.5/0.5/-3/-1/1/RH_m_bot.center/<,
                        13/0/1/-0.5/-1/-0.5/RH_m.center/>
                        }
                        {
                            \draw[mid={\arr}{0.5}] (main.\ang-11.25) .. controls ++(\ax,\ay) and ++(\bx,\by) .. (\endpoint);
                        }
                        
                        \node[inner sep=0, anchor=north, rotate=146.25] (RTri) at ($(main.center) + (2.5,2.5)$) {
                            \begin{tikzpicture}
                                \draw[fill=white] (-0.5*\t,0) -- (0,0.5*\t) -- (0.5*\t,0) -- cycle;
                            \end{tikzpicture}
                        };
                        \draw[mid={<}{0.5}] (main.56.25) .. controls ++(1.5,1) and ++(-0.5,-1) .. (RTri.north);
                        \draw[mid={<}{0.5}] (RTri.south) .. controls ++(0.5,0.5) and ++(-0.5,-1) .. (BR_out.center);
                        \draw[mid] ($(RTri.south west)!0.1!(RTri.south east)$) .. controls ++(0.5,0.5) and ++(-0.5,-0.5) .. (BR_blac_r);
                        \draw ($(RTri.south west)!0.9!(RTri.south east)$) .. controls ++(0,0.5) and ++(0.5,-0.5) .. (BR_blac_l);
                    
                        \node[inner sep=0, anchor=north, rotate=213.75] (LTri) at ($(main.center) + (-2.5,2.5)$) {
                            \begin{tikzpicture}
                                \draw[fill=white] (-0.5*\t,0) -- (0,0.5*\t) -- (0.5*\t,0) -- cycle;
                            \end{tikzpicture}
                        };
                        \draw[mid={<}{0.5}] (main.123.75) .. controls ++(-1.5,1) and ++(0.5,-1) .. (LTri.north);
                        \draw[mid={<}{0.5}] (LTri.south) .. controls ++(0.5,0.5) and ++(-0.5,-1) .. (BL_out.center);
                        \draw[mid] ($(LTri.south west)!0.1!(LTri.south east)$) .. controls ++(-0.25,0.5) and ++(-0.5,-0.25) .. (BL_blac_r);
                        \draw ($(LTri.south west)!0.9!(LTri.south east)$) .. controls ++(-0.5,0.25) and ++(0.5,-0.5) .. (BL_blac_l);

                    \end{tikzpicture}}}
                };
            \end{tikzpicture}
        };
        
        \draw[{|[scale=1.5]}-{>[scale=1.5]}] ($(strings.south) + (0,-0.5)$) -- ($(pic.north) + (0,0.5)$);
    \end{tikzpicture}}}
\end{figure}

and a linear map in the other direction 
\begin{align*}
\KSNrm^\Ccal(\Sigma;X_1,X_2,X_3)\longrightarrow\hom_{\Zsf_G(\Ccal)}(\onebb,c) 
\end{align*}
\begin{figure}[H]
    \centering
    \makebox[\textwidth][c]{
    \resizebox{0.6\width}{!}{
    \def\x{10}
    \def\y{1}
    \def\d{2}
    \def\h{1}
    \FPeval\s{1 / 16}
    \FPeval\t{2 * \s * \x + 0.5}
    \begin{tikzpicture}
        \node (pic) at (0,0){
            
            \begin{tikzpicture}
                \node[inner sep=0] (figure) at (0,0) {
                    \centering
                    \makebox[\textwidth][c]{
                    \resizebox{\SurfScale\width}{!}{

                    \begin{tikzpicture}
            
                        \begin{scope}[on background layer]
                            \draw  plot[smooth cycle, thick, tension=0.75] coordinates {(-6,-3) (0,-3) (6,-3) (6,0) (5,5) (2,6) (-2,6) (-5,5) (-6,0)};
                        \end{scope}
                        
                        
                        \foreach \i/\n/\x/\y in 
                            {
                            1/LH/-3/1.5, 
                            2/RH/3/1.5}
                            {
                            \node[inner sep=-0.75mm, anchor=center] (\n) at (\x,\y) {
                                \resizebox{0.75\width}{0.25\height}{
                                \begin{tikzpicture}[]
                                    
                                        \FPset\rbig{2}
                                        \FPset\rsmol{1}
                                        
                                        \FPset\initang{-30}
                                        \FPset\finang{-150}
                                        \FPeval\rang{round(abs((\initang - \finang) / 2), 0)}
                                        
                                        \FPeval\dsmol{\rsmol * (1-cos(\rang * \FPpi / 180))}
                                        \FPeval\dbig{\rbig - root(2, (\rbig^2 - \rsmol^2 * sin(\rang * \FPpi / 180 )^2))}
                                        \FPeval\hsep{(\rsmol - \dsmol) + (\rbig - \dbig)}
                                        
                                        \node (c) at (0,0) {};
                                        \centerarc[thick](c)(\initang:\finang:\rbig cm);
                                        \centerarc[thick, transform canvas={yshift=-\hsep cm}](c)(-\initang:-\finang:\rsmol cm);
                                    
                                    \end{tikzpicture}
                                    }
                            };
                            
                            \node[anchor=center] (\n_m) at (\n.south) {};
                            \node[anchor=center] (\n_r) at ($(\n.south) + (1,0.125)$) {};
                            \node[anchor=center] (\n_l) at ($(\n.south) + (-1,0.125)$) {};
                        };
                        
                        \foreach \i\n/\x/\y/\a/\sx/\sy in 
                            {
                            1/BL/-4/6/30/0.5/0.5, 
                            2/BM/0/7/0/0.5/0.5, 
                            3/BR/4/6/-30/0.5/0.5}
                            {
                            \node[anchor=center, inner sep=0, rotate=\a] (\n) at (\x,\y) {
                                \begin{tikzpicture}[xscale=\sx,yscale=\sy]
                                    
                                    
                                    \fill[white] (0,0) arc (0:180:2 and 0.5)
                                    .. controls ++(0,-1) and ++(1,2) .. (-5,-4)
                                    -- (1,-4)
                                    .. controls ++(-1,2) and ++(0,-1) .. (0,0);
                                    
                                    \draw[red] (0,0) arc (0:180:2 and 0.5) coordinate (p);
                                    \draw (p) .. controls ++(0,-1) and ++(1,2) .. (-5,-4)
                                        node[pos=0.5, anchor=center] (upl) {}
                                        node[pos=0.85, anchor=center] (\n_qupl) {}
                                        node[pos=0.99, anchor=center] (\n_qupl_purp) {};
                                    \draw (1,-4) .. controls ++(-1,2) and ++(0,-1) .. (0,0)
                                        node[pos=0.5, anchor=center] (upr) {}
                                        node[pos=0.15, anchor=center] (\n_qupr) {}
                                        node[pos=0.01, anchor=center] (\n_qupr_purp) {};
                                    \draw[red] (0,0) arc (0:-180:2 and 0.5);
                                    
                                    \ifthenelse{\i=2}
                                    {
                                    }{
                                        \draw[violet] (upl.center) .. controls ++(0,-4/3*0.5) and ++(0,-4/3*0.5) .. (upr.center)
                                            node[anchor=center, inner sep=0, minimum size=2mm, fill=white, pos=0.5] {};
                                        \draw[violet, dashed] (upl.center) .. controls ++(0,4/3*0.5) and ++(0,4/3*0.5) .. (upr.center);
                                        \draw[dashed] (\n_qupl.center) .. controls ++(0,4/3*0.5) and ++(0,4/3*0.5) .. (\n_qupr.center);
                                    }
                                    
                                \end{tikzpicture}
                            };
                            \node[anchor=center] (\n_out) at ($(\n.north) + (\a-90:0.5)$) {};
                            \node[anchor=center, rotate=\a] at ($(\n.north) + (\a+90:0.25)$) {\Large $X_{\i}$};
                        };
                        
                        \node[anchor=center] (LH_m_bot) at ($(LH.south) + (0,-4.35)$) {};
                        \node[anchor=center] (LH_r_bot) at ($(LH.south) + (1,-4.25)$) {};
                        \node[anchor=center] (LH_l_bot) at ($(LH.south) + (-1,-4.4)$) {};
                        \node[anchor=center] (RH_m_bot) at ($(RH.south) + (0,-4.35)$) {};
                        \node[anchor=center] (RH_r_bot) at ($(RH.south) + (1,-4.4)$) {};
                        \node[anchor=center] (RH_l_bot) at ($(RH.south) + (-1,-4.25)$) {};
                        
                        \node[anchor=center] (BL_purp_l) at ($(BL.south west) + (0,0.05)$){};
                        \node[anchor=center] (BL_purp_r) at ($(BL.south east) + (-0.075,0.05)$){};
                        \node[anchor=center] (BL_blac_l) at ($(BL.south west) + (-0.05,0.5)$){};
                        \node[anchor=center] (BL_blac_r) at ($(BL.south east) + (-0.5,0.35)$){};
                
                        \node[anchor=center] (BR_purp_l) at ($(BR.south west) + (0.05,0)$){};
                        \node[anchor=center] (BR_purp_r) at ($(BR.south east) + (0,0.05)$){};
                        \node[anchor=center] (BR_blac_l) at ($(BR.south west) + (0.35,0.25)$){};
                        \node[anchor=center] (BR_blac_r) at ($(BR.south east) + (0.05,0.525)$){};
                        
                        \node[anchor=center] (BM_purp_l) at ($(BM.south west) + (0.05,0.05)$){};
                        \node[anchor=center] (BM_purp_r) at ($(BM.south east) + (-0.05,0.1)$){};

                        \node[fill=white,
                              circle, 
                              anchor=center, 
                              inner sep=0, 
                              minimum size=6mm, 
                              draw] 
                              (main) at (0,1.5) {\Large $\psi$};
                          
                        \node[anchor=center, circle, draw, inner sep=0, minimum size=4mm]
                            (a1) at ($(main.center) + (20:3)$) {$\alpha$};
                        \node[anchor=center, circle, draw, inner sep=0, minimum size=4mm]
                            (a2) at ($(main.center) + (-45:3)$) {$\alpha$};
                        \draw[mid] (main.20) .. controls ++(2,0) and ++(-1,0) .. (a1.180)
                            node[anchor=north, pos=0.75] {$i_4$};
                        \draw[mid] (a1.0) .. controls ++(2,0) and ++(4,0) .. (a2.0)
                            node[anchor=west, pos=0.5] {$m_5$};
                        \draw[mid] (a2.180) .. controls ++(-1,0) and ++(0.5,0) .. (main.315);
                        
                        \node[anchor=center, circle, draw, inner sep=0, minimum size=4mm]
                            (b1) at ($(main.center) + (-15:2)$) {$\beta$};
                        \node[anchor=center, circle, draw, inner sep=0, minimum size=4mm]
                            (b2) at ($(main.center) + (-75:2)$) {$\beta$};
                        \draw[mid] (main.345) .. controls ++(1,0) and ++(-1,0) .. (b1.180)
                            node[anchor=south, pos=0.75] {$i_2$};
                        \draw[mid] (b1.340) to[bend right] (RH_m.center);
                        \draw[mid={<}{0.5}] (main.285) .. controls ++(0,-0.5) and ++(0,0.5) .. (b2.90);
                        \draw[mid] (RH_m_bot.center) .. controls ++(0,1) and ++(0,-1) .. (b2.270)
                            node[anchor=north east, pos=0.5] {$m_4$};
                        
                        \node[anchor=center, circle, draw, inner sep=0, minimum size=4mm]
                            (d1) at ($(main.center) + (35:4)$) {$\delta$};
                        \node[anchor=center, circle, draw, inner sep=0, minimum size=4mm]
                            (d2) at ($(main.center) + (55:4)$) {$\delta$};
                        \draw[mid] (main.35) .. controls ++(1,1) and ++(-1,-1) .. (d1.225);
                        \draw[mid={<}{0.5}] (main.55) .. controls ++(0,1) and ++(-1,-1) .. (d2.225)
                            node[anchor=east, pos=0.5] {$i_5$};
                        \draw[mid] (d1.60) .. controls ++(0.5,0.5) and ++(-0.75,-0.25) .. (BR_blac_r.center) 
                            node[anchor=north west, pos=0.5] {$m_6$};
                        \draw[mid={<}{0.5}] (d2.90) .. controls ++(0,0.5) and ++(0.25,-0.75) .. (BR_blac_l.center);
                        
                        \node[anchor=center, circle, draw, inner sep=0, minimum size=4mm]
                            (c1) at ($(main.center) + (160:3)$) {$\gamma$};
                        \node[anchor=center, circle, draw, inner sep=0, minimum size=4mm]
                            (c2) at ($(main.center) + (225:3)$) {$\gamma$};
                        \draw[mid={<}{0.5}] (main.160) .. controls ++(-2,0) and ++(1,0) .. (c1.0)
                            node[anchor=north, pos=0.75] {$i_3$};
                        \draw[mid={<}{0.5}] (c1.180) .. controls ++(-2,0) and ++(-4,0) .. (c2.180)
                            node[anchor=east, pos=0.5] {$m_2$};
                        \draw[mid={<}{0.5}] (c2.0) .. controls ++(1,0) and ++(-0.5,0) .. (main.225);
                        
                        \node[anchor=center, circle, draw, inner sep=0, minimum size=4mm]
                            (e1) at ($(main.center) + (195:2)$) {$\epsilon$};
                        \node[anchor=center, circle, draw, inner sep=0, minimum size=4mm]
                            (e2) at ($(main.center) + (255:2)$) {$\epsilon$};
                        \draw[mid] (main.195) .. controls ++(-1,0) and ++(1,0) .. (e1.0)
                            node[anchor=south, pos=0.75] {$i_1$};
                        \draw[mid] (e1.200) to[bend left] (LH_m.center);
                        \draw[mid={<}{0.5}] (main.255) .. controls ++(0,-0.5) and ++(0,0.5) .. (e2.90);
                        \draw[mid] (LH_m_bot.center) .. controls ++(0,1) and ++(0,-1) .. (e2.270)
                            node[anchor=north west, pos=0.5] {$m_3$};
                        
                        \node[anchor=center, circle, draw, inner sep=0, minimum size=4mm]
                            (f1) at ($(main.center) + (145:4)$) {$\zeta$};
                        \node[anchor=center, circle, draw, inner sep=0, minimum size=4mm]
                            (f2) at ($(main.center) + (125:4)$) {$\zeta$};
                        \draw[mid] (main.145) .. controls ++(-1,1) and ++(1,-1) .. (f1.315);
                        \draw[mid={<}{0.5}] (main.125) .. controls ++(0,1) and ++(1,-1) .. (f2.315)
                            node[anchor=west, pos=0.5] {$i_6$};
                        \draw[mid] (f1.150) .. controls ++(-0.5,0.5) and ++(0.75,-0.25) .. (BL_blac_l.center) 
                            node[anchor=north east, pos=0.5] {$m_1$};
                        \draw[mid={<}{0.5}] (f2.90) .. controls ++(0,0.5) and ++(-0.25,-0.75) .. (BL_blac_r.center);
                        
                        \draw[violet] (b2.0) .. controls ++(0.5,0) and ++(0,-1) .. (a2.270);
                        \draw[violet] (a2.90) .. controls ++(0.1,0.75) and ++(-0.1,-0.75) .. (b1.270);
                        \draw[violet] (b1.135) .. controls ++(-0.75,0.75) and ++(0,-0.75) .. (a1.270);
                        \draw[violet] (a1.90) .. controls ++(0,0.5) and ++(0,-0.5) .. (d1.270);
                        \draw[violet] (e2.180) .. controls ++(-0.5,0) and ++(0,-1) .. (c2.270);
                        \draw[violet] (c2.90) .. controls ++(-0.1,0.75) and ++(0.1,-0.75) .. (e1.270);
                        \draw[violet] (e1.45) .. controls ++(0.75,0.75) and ++(0,-0.75) .. (c1.270);
                        \draw[violet] (c1.90) .. controls ++(0,0.5) and ++(0,-0.5) .. (f1.270);
                        \draw[violet] (f2.0) .. controls ++(2,-0.75) and ++(-2,-0.75) .. (d2.180)
                            node[anchor=center, fill=white, midway, minimum size=2mm] {}
                            node[anchor=north, yshift=-1mm, violet, pos=0.25] {$i$};
                        
                        \draw[mid={<}{0.5}] (main.45) .. controls ++(1,1.5) and ++(-1,-2) .. (BR_out.center);
                        \draw[mid={<}{0.5}] (main.135) .. controls ++(-1,1.5) and ++(1,-2) .. (BL_out.center);
                        \draw[mid={<}{0.5}] (main.90) .. controls ++(0,2) and ++(0,-2) .. (BM_out.center);
                        
                        \foreach \n/\dx/\dy in 
                        {
                            LH/-0.5/0, 
                            RH/0.5/0}
                        {
                            \draw[dashed] (\n_m.center) .. controls ++(\dx,\dy) and ++(\dx,-\dy) .. (\n_m_bot);
                        };
                
                        \foreach \ang/\ax/\ay/\bx/\by/\endpoint/\arr/\arrpos/\lab/\labanchor/\labpos in 
                        {
                        }
                        {
                        }
                        
                    \end{tikzpicture}}}
                };
                \node[anchor=south west] (sum) at (figure.north west){\Large $\displaystyle{\sum_{\substack{i \in I_{e}, \\ m_1,\dots,m_6}} \frac{d_i}{D}d_{m_1}d_{m_2}d_{m_3}d_{m_4}d_{m_5}d_{m_6}}$};
            \end{tikzpicture}
        };
        \node[anchor=north] (strings) at ($(pic.south) + (0,-\d)$){
            \begin{tikzpicture}
                \node (morphism) at (0,0) {
                    \centering
                    \makebox[\textwidth][c]{
                    \resizebox{\StringScale\width}{!}{

                    \begin{tikzpicture}
                        
                        \node[draw, anchor=center, inner sep=0, minimum height=\y cm, minimum width=\x cm] 
                            (psi) at (0,0) {\LARGE $\psi$};
                        
                        \foreach \i/\dir/\lab/\c/\mor in 
                        {
                        1/>/i_6/1/\zeta,
                        2/</X_3/0/,
                        3/</i_6/1/\zeta,
                        4/<//0/,
                        5/</i_5/1/\delta,
                        6/</X_2/0/,
                        7/>/i_5/1/\delta,
                        8/>/i_4/1/\alpha,
                        9/>/i_2/1/\beta,
                        10/</i_4/1/\alpha,
                        11/</i_2/1/\beta,
                        12/</i_1/1/\epsilon,
                        13/>/i_3/1/\gamma,
                        14/>/i_1/1/\epsilon,
                        15/</i_3/1/\gamma}
                        {
                            \FPeval\loc{\i * \s}
                            \draw[mid={\dir}{0.5}] ($(psi.north west)!\loc!(psi.north east)$) node[anchor=south west] {$\lab$} -- ++(0,\h)
                                node[anchor=center] (temp_node\i) {};
                            \ifthenelse{\c=1}
                            {
                                \node[anchor=south, draw, circle, inner sep=0, minimum size=4mm] (mid_node\i) at (temp_node\i.center) {$\mor$};
                                \draw (mid_node\i.north) -- ($(temp_node\i.center) + (0,\h)$) node[anchor=center] (temp_node\i) {};
                            }
                            {
                                \node[anchor=south, circle, inner sep=0, minimum size=4mm] (mid_node\i) at (temp_node\i.center) {};
                                \draw (temp_node\i.center) -- ++(0,\h) node[anchor=center] (temp_node\i) {};
                            }
                        };
                        
                        \foreach \i/\lab/\dir in 
                        {1/m_1/,
                        2/\varphi_{h_1}(X_3)/,
                        3/m_1/,
                        4/X_1/,
                        5/m_6/,
                        6/\varphi_{h_2}(X_2)/,
                        7/m_6/,
                        8/m_5/>,
                        9/m_4/>,
                        10/m_5/<,
                        11/m_4/>,
                        12/m_3/<,
                        13/m_1/>,
                        14/m_3/>,
                        15/m_1/<}
                        {
                            
                            \ifthenelse{\i=1 \OR \i=3 \OR \i=5 \OR \i=7}
                            {
                                \node[anchor=north west] at (temp_node\i.center) {$\lab$};
                            }
                            {
                                \ifthenelse{\i=2 \OR \i=6}
                                {
                                    \node[anchor=south, inner sep=0] (tri\i) at (temp_node\i) {
                                        \begin{tikzpicture}
                                            \draw (-0.5*\t,0) -- (0,0.5*\t) -- (0.5*\t,0) -- cycle;
                                        \end{tikzpicture}
                                    };
                                    \draw[mid={<}{0.5}] (tri\i.north) -- ($(temp_node\i.center) + (0,2*\h)$)
                                        node[anchor=south] {$\lab$};;
                                }
                                {
                                    \draw[mid={\dir}{0.5}] (temp_node\i.center) -- ++(0,2*\h) 
                                        node[anchor=south] {$\lab$};
                                }
                            }
                        };
                        \draw[violet] (mid_node1.west) arc (90:270:1.5*\y) coordinate (p);
                        \draw[violet] (mid_node15.east) arc (90:-90:1.5*\y) coordinate (q);
                        \draw[violet, mid] (p) -- (q) node[anchor=north, midway, violet] {$i$};
                        
                        \foreach \i in {3,4,7,8,9,10,11,12,13,14}{
                            \FPeval\j{round((\i + 1),0)}
                            \draw[violet] (mid_node\i.east) -- (mid_node\j.west);
                        };
                    \end{tikzpicture}}}
                };
                \node[anchor=south west] (sum_r) at (morphism.north west) {\Large $\displaystyle{\sum_{\substack{i \in I_{e}, \\ m_1,\dots,m_6}} \frac{d_i}{D}d_{m_1}d_{m_2}d_{m_3}d_{m_4}d_{m_5}d_{m_6}}$};
                
            \end{tikzpicture}
            
        };
        \draw[{|[scale=1.5]}-{>[scale=1.5]}] ($(pic.south) + (0,-0.5)$) -- ($(strings.north) + (0,0.5)$);
    \end{tikzpicture}}}
\end{figure}

Using the same arguments as before it is now easy to show that the two maps are inverse to each other. 

The above computation can be summarized in the following proposition.
\begin{prop} For $\Sigma$ a genus two surface with three boundary components and $c\in \Zsf_G(\Ccal)$ as in \eqref{c defi}, it holds
\eq{\label{genus 2 vector space}
\KSNrm^\Ccal(\Sigma;X_1,X_2,X_3)\simeq \hom_{\Zsf_G(\Ccal)}(\onebb,c)\quad .
}
\end{prop}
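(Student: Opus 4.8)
The plan is to follow verbatim the strategy of the proofs of Proposition \ref{cyl hom space} and Proposition \ref{monoidal prod}: exhibit the two linear maps drawn above and verify that they are mutually inverse. Write $F$ for the map $\hom_{\Zsf_G(\Ccal)}(\onebb,c)\to\KSNrm^\Ccal(\Sigma;X_1,X_2,X_3)$ and $K$ for the map in the opposite direction, both defined diagrammatically in the two figures preceding the statement. By Lemma \ref{fat graph basis}, $\KSNrm^\Ccal(\Sigma;\Bbf)$ has a basis of equivalence classes of fat graphs dual to the chosen $G$-triangulation with no internal vertices, whose internal edges are colored by simple objects compatible with $\zeta$. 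First I would fix a maximal tree $T$ in the fat graph and collapse it using the local relations on its disk-shaped neighborhood, arriving at the one-vertex presentation used above, in which the non-contracted edges carry the generators $\alpha,\beta,\gamma,\delta$ of $\pi_1(\Sigma)$ together with the boundary strands, each with a $G$-color uniquely fixed by $\zeta$.

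The crucial normal-form step is the repeated use of the completeness relation (Figure \ref{completeness relation}) to bring the handle edges and the cloaking circle around the first boundary component into the standard configuration of the final figure. After isotoping the cloaking circle so that it runs parallel to the fat graph, the resulting string-net is exactly the image under $F$ of the central morphism $\psi$; the crossings of the violet circle with the boundary and handle strands realize the half-braiding, which is precisely why the boundary objects emerge as $\phi_h(X_3)$, $X_1$, $\phi_g(X_2)$ and why the handle contributions reorganize into the summands $m_5\otimes m_4\otimes m_5^\ast\otimes m_4^\ast\otimes m_3\otimes m_2\otimes m_3^\ast\otimes m_2^\ast$ assembling the object $c$ of \eqref{c defi}. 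This identifies the standard-form string-nets with the image of $F$ and shows that $F$ and $K$ are well defined between the two spaces.

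It then remains to check $K\circ F=\mathrm{id}$ and $F\circ K=\mathrm{id}$. For $K\circ F$, applying $K$ to a string-net in standard form and invoking the definition of the idempotent $\pi$ together with the completeness relation to absorb the extra cloaking circles recovers the original graph, exactly as in steps $(1)$--$(2)$ of the cylinder proof. For $F\circ K$, one starts with $\Upsilon\in\hom_{\Zsf_G(\Ccal)}(\onebb,c)$, uses the defining property of the half-braiding $\sigma$ on $c$ to slide the violet strand past $\Upsilon$, and then uses that $\Upsilon$ is already a morphism in the $G$-center to collapse the remaining cloaking circle, leaving $\Upsilon$ unchanged; this is formally identical to equalities $(1)$--$(2)$ in the cylinder computation.

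The main obstacle is bookkeeping rather than conceptual: I must verify that the half-braiding $\sigma$ drawn for $c$ is exactly the one induced by dragging the cloaking circle around the handles, so that $F$ indeed lands in $\hom_{\Zsf_G(\Ccal)}(\onebb,c)$ and $K$ respects the center constraint. Concretely this amounts to tracking the $G$-colors through the successive completeness relations—ensuring in particular that the cyclicity condition yields $hb_3h^{-1}b_1gb_2g^{-1}\gamma\delta\gamma^{-1}\delta^{-1}\alpha\beta\alpha^{-1}\beta^{-1}=e$ and that the crossings accumulated along $\alpha,\beta,\gamma,\delta$ reproduce $\gamma_{c,\bullet}$. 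Once this compatibility is in place, the two inversion computations are identical to those already carried out for the cylinder and pair of pants, and the isomorphism \eqref{genus 2 vector space} follows.
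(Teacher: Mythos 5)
Your proposal matches the paper's own argument essentially verbatim: the paper likewise passes to the fat-graph basis of Lemma \ref{fat graph basis}, collapses a maximal tree to a one-vertex presentation, applies the completeness relation to reach the standard form with handle labels $m_i$ and the cyclicity constraint, defines the same two diagrammatic maps between $\hom_{\Zsf_G(\Ccal)}(\onebb,c)$ and $\KSNrm^\Ccal(\Sigma;X_1,X_2,X_3)$, and concludes they are mutually inverse by the same arguments used for the cylinder and pair of pants. No gaps; this is the paper's proof.
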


\begin{rem} The string-net space $\KSNrm^\Ccal(\Sigma; X_1,X_2,X_3)$ has the same structure as the vector spaces obtained in a different construction in \cite{turaev20203}. The different geometric input data in the two constructions make a detailed comparison of the vector spaces uninstructive.
\end{rem} 

\newpage 

\appendix

\section{Proof of Theorem \ref{G scc}}
The Theorem will follow from \cite[Proposition~6.2]{bakalov2000lego}, which we quickly state for the reader's convenience.

\begin{theo}\label{BakalovKirillov} Given two $2$-dimensional CW-complexes $F$, $B$ and a map $\pi:F^{[1]}\rightarrow B^{[1]}$ of their $1$-skeletons, 
	such that.:
\begin{enumerate}[label=\roman*)]
\item For any vertex $v$ and edge $e$ in $B$, there exists a vertex $v^\prime$ and an edge $e^\prime$ with $\pi(v^\prime)=v$ and $\pi(e^\prime)=e$. Furthermore, for $v_1\xrightarrow{e}v_2$ an edge in $B$, and any $v_1^\prime\in \pi^{-1}(v_1)$, there exists an edge $v_1^\prime\xrightarrow{e^\prime}v_2^\prime$ such that $\pi(e^\prime)=e$.
\item $B$ is connected and simply connected.
\item There exist a vertex $v\in B$, such that $\pi^{-1}(v)$ is connected and simply connected.
\item Given an edge $v_1\xrightarrow{e}v_2$ in $B$, an edge $v_1^\prime\xrightarrow{f_1}v_1^{\prime\prime}$ in $\pi^{-1}(v_1)$ and two lifts $v_1^\prime\xrightarrow{e^\prime}v_2^\prime$, $v_1^{\prime\prime}\xrightarrow{e^{\prime\prime}}v_2^{\prime\prime}$ of $e$, there exists an edge $v_2^\prime\xrightarrow{f_2}v_2^{\prime\prime}$ in $\pi^{-1}(v_2)$ such that the square
\begin{center}
\begin{tikzcd}
v_1^\prime\ar[r,"e^\prime"]\ar[d,"f_1"'] & v_2^\prime\ar[d,"f_2"]\\
v_1^{\prime\prime}\ar[r,"e^{\prime\prime}"] & v_2^{\prime\prime}
\end{tikzcd}
\end{center}
is contractible in $F$.
\item The boundary $\p C$ of any $2$-cell in $B$ has a contractible lift in $B$.
\end{enumerate}
Then the complex $F$ is connected and simply connected.
\end{theo}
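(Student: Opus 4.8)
The statement is an abstract ``homotopy lifting'' criterion: hypotheses (i), (iv), (v) say that $\pi$ behaves on $1$-skeleta like the restriction of a fibration, while (ii) and (iii) assert the contractibility of the base and of one fiber. The plan is therefore to run the combinatorial analogue of the exact-sequence-of-a-fibration argument, reducing first connectedness and then simple connectedness of $F$ to the corresponding properties of $\pi^{-1}(v)$. I would begin by isolating the \emph{path-lifting property} contained in (i): given any edge path $e_1 e_2 \cdots e_n$ in $B$ starting at $\pi(\tilde x)$ and any vertex $\tilde x\in F$, the ``furthermore'' clause of (i) lifts the edges one at a time to an edge path in $F$ starting at $\tilde x$ and covering $e_1\cdots e_n$.

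For \textbf{connectedness}, take any vertex $\tilde x\in F$; since $B$ is connected by (ii), choose an edge path from $\pi(\tilde x)$ to the distinguished vertex $v$ of (iii), lift it starting at $\tilde x$, and land at some $\tilde x'\in\pi^{-1}(v)$. As $\pi^{-1}(v)$ is connected, $\tilde x'$ is joined inside the fiber to a fixed basepoint $\tilde v\in\pi^{-1}(v)$, so $F$ is connected.

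The substance is \textbf{simple connectedness}. Fix a based edge loop $\gamma$ in $F$ at $\tilde v$. Its image $\pi(\gamma)$ is an edge loop in $B$ at $v$, which by (ii) is null-homotopic; combinatorially this means $\pi(\gamma)$ reduces to the constant loop by a finite sequence of \emph{elementary moves}, each being either a backtrack cancellation $e\bar e\leftrightarrow\varnothing$ or a ``$2$-cell slide'' replacing a subpath $p$ of the boundary of a $2$-cell $C$ by the complementary subpath $q$, where $\partial C=p\bar q$. I would lift this reduction one move at a time, maintaining at each stage an edge loop $\gamma_t$ in $F$ whose projection equals the current stage of the base reduction and which is homotopic to $\gamma$ \emph{in} $F$. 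Backtracks lift trivially. For a $2$-cell slide, hypothesis (v) provides a contractible lift $\widetilde{\partial C}$ of $\partial C$ in $F$, which splits as a lift $\tilde p_0$ of $p$ followed by the reverse of a lift $\tilde q_0$ of $q$; replacing $\tilde p_0$ by $\overline{\tilde q_0}$ along $\widetilde{\partial C}$ alters the loop only by a contractible piece, hence up to homotopy in $F$. When the base reduction reaches the constant loop, the accompanying lift $\gamma_{\mathrm{fin}}$ projects to the constant path, so it lies entirely in $\pi^{-1}(v)$ and satisfies $\gamma_{\mathrm{fin}}\simeq\gamma$ in $F$. Finally (iii) forces $\gamma_{\mathrm{fin}}$ to bound inside the fiber, and therefore inside $F$, whence $[\gamma]=1$.

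The main obstacle is the bookkeeping that makes the $2$-cell slide legitimate: the lift $\tilde p_0$ extracted from the loop $\widetilde{\partial C}$ of (v) need not coincide with the subpath $\tilde p$ actually occurring in $\gamma_t$, and the basepoints of the various lifts wander within fibers as the homotopy proceeds. This is exactly where (iv) enters: given two lifts of a single base edge whose sources are joined by a fiber edge, (iv) supplies a fiber edge joining the targets together with a contractible square. Iterating (iv) along a base path assembles a ``ladder'' of contractible squares between any two lifts of that path whose initial vertices lie in the same fiber component, proving the two lifts homotopic in $F$ rel endpoints. I would therefore first establish this ladder lemma from (iv), use it to identify $\tilde p$ with $\tilde p_0$ up to homotopy before each slide, and only then invoke (v). Checking that the accumulated homotopies remain basepoint-compatible — so that every $\gamma_t$ is a genuine based loop homotopic to $\gamma$ — is the delicate point; once the ladder lemma is available, the remainder is a finite induction on the length of the base null-homotopy.
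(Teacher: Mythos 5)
You should first know that the paper does not prove this statement at all: it is imported verbatim from \cite[Proposition~6.2]{bakalov2000lego} and used as a black box in the appendix to establish Theorem \ref{G scc}. So your attempt can only be measured against the original argument, and in outline you reproduce it faithfully: edge-path lifting from (i), a ladder of contractible squares built from (iv), move-by-move lifting of a combinatorial null-homotopy in $B$ (backtrack moves and $2$-cell slides) using (v), and elimination of the residual loop in the fiber by (iii). Your connectedness argument is complete as stated, and you also silently correct the misprint in hypothesis (v), where ``contractible lift in $B$'' must be read as ``contractible lift in $F$''.

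Two of your steps are, however, asserted more strongly than the hypotheses deliver, and one of them is a genuine gap in the sketch. First, the ladder lemma does not make two lifts of a base path homotopic \emph{rel endpoints}: their endpoints differ in general, and the correct conclusion is $\tilde p \simeq f_1\cdot \tilde p_0\cdot \bar f_2$ with fiber paths $f_1$, $f_2$ at the two ends; it is this weaker statement that your bookkeeping must propagate. Second, and more seriously, (iii) asserts connectivity of the \emph{single} fiber $\pi^{-1}(v)$, whereas your $2$-cell slides and backtrack cancellations take place over arbitrary vertices $w$, where you presume without justification that the relevant starting points ``lie in the same fiber component''. Relatedly, ``backtracks lift trivially'' is false as written: in $\gamma_t$ a base backtrack $e\bar e$ appears as a lift of $e$ followed by \emph{some} lift of $\bar e$ that need not be its reverse, and (iv) cannot directly compare two lifts of the same edge with a common source, since it requires an actual fiber edge $f_1$. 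The repair is to prove first that \emph{every} fiber is connected: given $w',w''\in\pi^{-1}(w)$, lift a base path from $w$ to $v$ starting at each of them, join the two endpoints inside the connected fiber $\pi^{-1}(v)$, and run the ladder along the reversed path using the reverses of the two lifts already constructed; this outputs a fiber path from $w'$ to $w''$. With all fibers connected, the ladder (iterated along a fiber path, with auxiliary lifts supplied by (i) at the intermediate vertices) also compares same-source lifts after a detour through one fiber edge whenever such an edge exists; the degenerate case needs a separate remark, though in the paper's application it is vacuous, since the $G$-labels of a flipped edge are uniquely determined and lifts of a flip from a given $G$-triangulation are therefore unique. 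So your architecture is the right one, but the all-fiber connectivity lemma is a needed ingredient your proposal omits.
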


\begin{proof}[Proof of Theorem \ref{G scc}] The proof consists of checking points i)-v) of Theorem \ref{BakalovKirillov} for the forgetful map $\pi:\Pcal^G(\Sigma,\zeta,M)^{[1]}\rightarrow\Pcal(\Sigma,M)^{[1]}$, which forgets the $G$-labels of the edges, maps $G$-flips to flips of the underlying ideal triangulation and maps gauge transformations to the identity. Note that the fiber of $\pi$ over any ideal triangulation $\Tsf$ is the set of all possible $G$-labels of $\Delta$ induced by $\zeta:\Sigma\rightarrow BG$.
\begin{enumerate}[label=\roman*)]
\item The forgetful map is clearly surjective on vertices and edges. In addition, given a flip $\Delta\xrightarrow{F_e}\Delta^\prime$ and any $G$-triangulation $(\Delta,g)\in \pi^{-1}(\Delta)$, there is a $G$-flip $(\Delta,g)\xrightarrow{F^G_\ebf}(\Delta^\prime,g^\prime)$ covering $e$.
\item The Ptolemy-complex is connected and simply connected by theorem \ref{Ptolemy c sc}.
\item Let $\Delta$ be any ideal triangulation of $\Sigma$. The fiber $\pi^{-1}(\Delta)$ is connected, as the gauge group $\Gcal_\Delta$ relates any two $G$-labelings induced by $\zeta$. Due to relation \textbf{GP5} any sequence of gauge transformations in $\pi^{-1}(\Delta)$ is homotopic to a sequence of the form $\lambda_1(v)\cdots\lambda_{n_v}(v)\lambda_1(w)\cdots\lambda_{n_w}(w)\lambda_1(u)\cdots $ running trough all vertices of $\Delta$ in arbitrary order. Furthermore, any two paths having the same start- and endpoint in $\pi^{-1}(\Delta)$ and consisting entirely of different gauge transformations at the same vertex are homotopic. This follows from the observation, that due to \textbf{GP6} the subcomplex spanned by gauge transformations at a single vertex of a $G$-triangulation is the $2$-skeleton of a $|G|-1$-dimensional simplex and therefore is simply connected. It follows that $\pi^{-1}(\Delta)$ is simply connected.
\item This directly follows from the mixed relation \textbf{GP4}.
\item Similar to the previous point, this holds, since the boundaries of the $2$-cells \textbf{GP1}, \textbf{GP2} and \textbf{GP3} project to the boundaries of the $2$-cells \textbf{P1}, \textbf{P2} and \textbf{P3}.
\end{enumerate} 

\end{proof}

\bibliographystyle{alpha}
\bibliography{references}

\begin{thebibliography}{BDSPV15}

\bibitem[Bal10a]{balsam}
Benjamin Balsam.
\newblock {Turaev-Viro invariants as an extended TQFT II}.
\newblock {\em arXiv:1010.1222}, 2010.

\bibitem[Bal10b]{balsam2010turaevviro}
Benjamin Balsam.
\newblock {Turaev-Viro invariants as an extended TQFT III}.
\newblock {\em arXiv:1012.0560}, 2010.

\bibitem[Bar22]{bartlett2022three}
Bruce Bartlett.
\newblock Three-dimensional tqfts via string-nets and two-dimensional surgery.
\newblock {\em arXiv preprint arXiv:2206.13262}, 2022.

\bibitem[BDSPV15]{bartlett2015modular}
Bruce Bartlett, Christopher~L Douglas, Christopher~J Schommer-Pries, and Jamie
  Vicary.
\newblock Modular categories as representations of the 3-dimensional bordism
  2-category.
\newblock {\em arXiv preprint arXiv:1509.06811}, 2015.

\bibitem[{Ben}10]{KirillovBalsam}
{Benjamin Balsam and Alexander Kirillov Jr.}
\newblock {Turaev-Viro invariants as an extended TQFT}.
\newblock {\em arXiv:1004.1533}, 2010.

\bibitem[BK00]{bakalov2000lego}
Bojko Bakalov and Alexander Kirillov.
\newblock {On the lego-Teichm{\"u}ller game}.
\newblock {\em Transformation groups}, 5(3):207--244, 2000.

\bibitem[Bor94]{borceux_1994}
Francis Borceux.
\newblock {\em Handbook of Categorical Algebra}, volume~1 of {\em Encyclopedia
  of Mathematics and its Applications}.
\newblock Cambridge University Press, 1994.

\bibitem[EGNO10]{tensorCats}
Pavel Etingof, Shlomo Gelaki, Dimitri Nikshych, and Victor Ostrik.
\newblock {\em {Tensor Categories}}, volume 205 of {\em {Mathematical Surveys
  and Monographs}}.
\newblock {American Mathematical Society}, 2010.

\bibitem[FFRS06]{Fjelstad:2005ua}
Jens Fjelstad, J{\"u}rgen Fuchs, Ingo Runkel, and Christoph Schweigert.
\newblock {TFT construction of RCFT correlators. V. Proof of modular invariance
  and factorisation}.
\newblock {\em Theor. Appl. Categor.}, 16:342--433, 2006.

\bibitem[FGK88]{felder1988spectra}
Giovanni Felder, Krzysztof Gawedzki, and Antti Kupiainen.
\newblock {Spectra of Wess-Zumino-Witten models with arbitrary simple groups}.
\newblock {\em Communications in mathematical physics}, 117(1):127--158, 1988.

\bibitem[FRS02]{Fuchs:2002cm}
J{\"u}rgen Fuchs, Ingo Runkel, and Christoph Schweigert.
\newblock {TFT construction of RCFT correlators I. Partition functions}.
\newblock {\em Nucl. Phys. B}, 646:353--497, 2002.

\bibitem[FRS04a]{Fuchs:2003id}
J{\"u}rgen Fuchs, Ingo Runkel, and Christoph Schweigert.
\newblock {TFT construction of RCFT correlators. II. Unoriented world sheets}.
\newblock {\em Nucl. Phys. B}, 678:511--637, 2004.

\bibitem[FRS04b]{Fuchs:2004dz}
J{\"u}rgen Fuchs, Ingo Runkel, and Christoph Schweigert.
\newblock {TFT construction of RCFT correlators. III. Simple currents}.
\newblock {\em Nucl. Phys. B}, 694:277--353, 2004.

\bibitem[FRS05]{Fuchs:2004xi}
J{\"u}rgen Fuchs, Ingo Runkel, and Christoph Schweigert.
\newblock {TFT construction of RCFT correlators IV: Structure constants and
  correlation functions}.
\newblock {\em Nucl. Phys. B}, 715:539--638, 2005.

\bibitem[FSY21]{fuchs2021string}
J{\"u}rgen Fuchs, Christoph Schweigert, and Yang Yang.
\newblock {String-net construction of RCFT correlators}.
\newblock {\em arXiv preprint arXiv:2112.12708}, 2021.

\bibitem[GNN09]{gelaki2009centers}
Shlomo Gelaki, Deepak Naidu, and Dmitri Nikshych.
\newblock Centers of graded fusion categories.
\newblock {\em Algebra \& Number Theory}, 3(8):959--990, 2009.

\bibitem[{Goo}18]{Goosen}
{Goosen, Gerrit}.
\newblock {Oriented 123-TQFTs via String-Nets and State-Sums}.
\newblock {\em {PhD Thesis, Stellenbosch University}}, 2018.

\bibitem[GR04]{gawkedzki2004basic}
Krzysztof Gawedzki and Nuno Reis.
\newblock Basic gerbe over non-simply connected compact groups.
\newblock {\em Journal of Geometry and Physics}, 50(1-4):28--55, 2004.

\bibitem[GSW11]{gawkedzki2011bundle}
Krzysztof Gawedzki, Rafal~R Suszek, and Konrad Waldorf.
\newblock Bundle gerbes for orientifold sigma models.
\newblock {\em Advances in Theoretical and Mathematical Physics},
  15(3):621--687, 2011.

\bibitem[HBFL16]{heinrich2016symmetry}
Chris Heinrich, Fiona Burnell, Lukasz Fidkowski, and Michael Levin.
\newblock Symmetry-enriched string nets: Exactly solvable models for set
  phases.
\newblock {\em Physical Review B}, 94(23):235136, 2016.

\bibitem[KJ11]{kirillov2011string}
Alexander Kirillov~Jr.
\newblock {String-net model of Turaev-Viro invariants}.
\newblock {\em arXiv preprint arXiv:1106.6033}, 2011.

\bibitem[KJB10]{BalsamKirillov}
Alexander Kirillov~Jr and Benjamin Balsam.
\newblock {Turaev-Viro invariants as an extended TQFT}.
\newblock {\em arXiv preprint arXiv:1004.1533}, 2010.

\bibitem[LW05]{Levin:2004mi}
Michael~A. Levin and Xiao-Gang Wen.
\newblock {String net condensation: A Physical mechanism for topological
  phases}.
\newblock {\em Phys. Rev. B}, 71:045110, 2005.

\bibitem[NS07]{ng2007higher}
Siu-Hung Ng and Peter Schauenburg.
\newblock Higher frobenius-schur indicators for pivotal categories.
\newblock {\em Hopf algebras and generalizations}, 441:63--90, 2007.

\bibitem[Pen12]{PennerBook}
Robert~C. Penner.
\newblock {\em {Decorated Teichm\"uller Theory}}.
\newblock The QGM Master Class Series. European Mathematical Society Publishing
  House, 2012.

\bibitem[SW20]{schweigert2020extended}
Christoph Schweigert and Lukas Woike.
\newblock Extended homotopy quantum field theories and their orbifoldization.
\newblock {\em Journal of Pure and Applied Algebra}, 224(4):106213, 2020.

\bibitem[SY21]{Schweigert:2019zwt}
Christoph Schweigert and Yang Yang.
\newblock {CFT Correlators for Cardy Bulk Fields via String-Net Models}.
\newblock {\em Symmetry, Integrability and Geometry: Methods and Applications},
  Apr 2021.

\bibitem[Tra22]{traube2022cardy}
Matthias Traube.
\newblock Cardy algebras, sewing constraints and string-nets.
\newblock {\em Communications in Mathematical Physics}, 390:1--45, 2022.

\bibitem[Tur10]{TuraevHQFTbook}
Vladimir Turaev.
\newblock {\em {Homotopy Quantum Field Theory}}, volume~10 of {\em EMS Tracts
  in Mathematics}.
\newblock European Mathematical Society Publishing House, 2010.

\bibitem[TV12]{turaev20123}
Vladimir Turaev and Alexis Virelizier.
\newblock {On 3-dimensional homotopy quantum field theory, I}.
\newblock {\em International Journal of Mathematics}, 23(09):1250094, 2012.

\bibitem[TV13]{turaev2013graded}
Vladimir Turaev and Alexis Virelizier.
\newblock {On the graded center of graded categories}.
\newblock {\em Journal of Pure and Applied Algebra}, 217(10):1895--1941, 2013.

\bibitem[TV20]{turaev20203}
Vladimir Turaev and Alexis Virelizier.
\newblock {On 3-dimensional homotopy quantum field theory III: comparison of
  two approaches}.
\newblock {\em International Journal of Mathematics}, 31(10):2050076, 2020.

\bibitem[Wal06]{Walker}
Kevin Walker.
\newblock {TQFTs}.
\newblock {\em available at https://canyon23.net/math/}, 2006.

\end{thebibliography}
\end{document}